\let\raggedright\justifying
\newtheorem{thm}{Theorem}[section]
\newtheorem{lem}[thm]{Lemma}
\newtheorem{prop}[thm]{Proposition}
\newtheorem*{prob}{Problem}
\theoremstyle{definition}
\newtheorem{defn}[thm]{Definition}
\theoremstyle{property}
\newtheorem{prope}[thm]{Property}
\theoremstyle{remark}
\newtheorem{rem}[thm]{Remark}
\newtheorem{ex}[thm]{Example}
\numberwithin{equation}{section}
\definecolor{ceruleanblue}{rgb}{0.16, 0.32, 0.75}
\numberwithin{equation}{section}
\DeclareMathOperator{\C}{\mathcal{C}}
\DeclareMathOperator{\CO}{\mathcal{O}}
\DeclareMathOperator{\Hom}{\mathrm{Hom}}
\DeclareMathOperator{\DR}{\mathrm{DR}}
\DeclareMathOperator{\MC}{\mathcal{S}}
\def\@tocline#1#2#3#4#5#6#7{\relax
  \ifnum #1>\c@tocdepth 
  \else
    \par \addpenalty\@secpenalty\addvspace{#2}%
    \begingroup \hyphenpenalty\@M
    \@ifempty{#4}{%
      \@tempdima\csname r@tocindent\number#1\endcsname\relax
    }{%
      \@tempdima#4\relax
    }%
    \parindent\z@ \leftskip#3\relax \advance\leftskip\@tempdima\relax
    \rightskip\@pnumwidth plus4em \parfillskip-\@pnumwidth
    #5\leavevmode\hskip-\@tempdima
      \ifcase #1
       \or\or \hskip 1em \or \hskip 2em \else \hskip 3em \fi%
      #6\nobreak\relax
    \dotfill\hbox to\@pnumwidth{\@tocpagenum{#7}}\par
    \nobreak
    \endgroup
  \fi}
\begin{document}

\title[Symplectic stratified pseudomanifolds]{Symplectic structures on stratified pseudomanifolds}

\author{Xiangdong Yang}
\address{Department of Mathematics, Lanzhou University, Lanzhou 730000, China}
\email{yangxd@lzu.edu.cn}

\subjclass[2010]{Primary 53D20; Secondary 58A35, 53D05}
\keywords{Stratification, symplectic structure, symplectic reduction.}

\date{\today}


\begin{abstract}
  The purpose of this paper is to investigate the definition of symplectic structure on a smooth stratified pseudomanifold in the framework of local $\C^{\infty}$-ringed space theory.
  We introduce a sheaf-theoretic definition of symplectic form and cohomologically symplectic structure on smooth stratified pseudomanifolds.
  In particular, we give an indirect definition of symplectic form on the quotient space of a smooth $G$-stratified pseudomanifold.
  Based on the structure theorem of singular symplectic quotients by Sjamaar--Lerman, we show that the singular reduced space $M_{0}=\mu^{-1}(0)/G$ of a symplectic Hamiltonian $G$-manifold $(M,\omega,G,\mu)$ admits a natural (indirect) symplectic form and a unique cohomologically symplectic structure.
\end{abstract}

\maketitle

\tableofcontents
\section{Introduction}
\subsection{Background}


To study the geometry of a singular space, for example an algebraic variety, a natural idea is to divide it into smooth manifolds.
As a geometric realization of this idea, Whitney \cite{Wh47} introduced the concept of a \lq\lq \emph{complifold}".
In the literature, it was Thom \cite{Th62} who first introduced the notion of \emph{stratification}.
In fact, Whitney's notion of complifold, or complex of manifolds, can be thought of as the first attempt at an abstract theory of stratifications.
Whitney was concerned with the triangulability of algebraic varieties, whereas Thom was concerned with the differentiable stability of smooth mappings.
Continuing the work by Thom, Mather completely worked out the differentiable stability problem in the stratification theory, and wrote a series of lecture notes which have become the standard reference of the stratification theory, see \cite{Ma12} and the references therein.
For a geometric study of stratified spaces, Goresky--MacPherson \cite{GM80,GM88} developed the intersection homology theory and stratified Morse theory to extend classical results such as the Poincar\'{e} duality and K\"{u}nneth formula to singular algebraic varieties.
In the study of stratified spaces, one of the most fundamental problems is that there does not exist a very good bundle theory.

In symplectic geometry, the stratification structure appeares naturally in the process of symplectic reduction.
Let $(M,\omega)$ be a connected symplectic manifold on which a compact Lie group $G$ acts in a Hamiltonian fashion.
Assume that the moment map $\mu:M\rightarrow\mathfrak{g}^{\ast}$ is proper.
Set $Z=\mu^{-1}(0)$ and $M_{0}=Z/G$.
With the induced sub-topology on $Z$ and quotient topologies on $M/G$ and $M_{0}$ respectively, we get a commutative square of continuous maps
\begin{equation*}
\vcenter{
\xymatrix@C=1.3cm{
  Z \,\ar[d]_{\pi} \ar[r]^{\imath} _{\mathrm{inclusion}}& M \ar[d]^{\Pi} \\
  M_{0}\, \ar[r]^{} & M/G.}
  }
\end{equation*}
The Marsden--Weinstein reduction procedure says that
if zero is a regular value of $\mu$ and $G$ acts freely on $Z$, then the \emph{symplectic quotient} (also called the \emph{reduced space}) $M_{0}$ is a symplectic manifold.
In general, the regular condition guarantees that the $G$-action on $Z$ is locally free and therefore $M_{0}$ becomes a symplectic orbifold.
If even the regular condition is removed the level set $Z$ has quadratic singularities and the reduced space $M_{0}$ can be a highly singular space.
Originating from the fundamental paper \cite{MW74}, the study of the structure of $M_{0}$ when it acquires singularities has been one of central problems in symplectic geometry.
In particular, from an algebro-geometric viewpoint, this problem closely relates to some topics
in algebraic geometry (cf. \cite{Ki98}).

From the viewpoint of Poisson geometry, Arms--Cushman--Gotay \cite{ACG89} showed that there exists a canonical smooth structure $\C^{\infty}(M_{0})$ which is a subalgebra of the algebra of continuous functions $\C^{0}(M_{0})$ and equipped with a natural Poisson bracket $\{-,-\}_{0}$ from the original one on $\C^{\infty}(M)$.
Subsequently, in their paper \cite{SL91}, Sjamaar--Lerman proved an important structure theorem for singular reduced spaces and precisely clarified the nature
and proper definition of such spaces called stratified symplectic spaces.
This structure theorem  says that $M_{0}$ is a stratified space with symplectic manifolds as its strata;
moreover, all strata of $M_{0}$ fit together \emph{symplectically}.
To be more specific, the inclusion of each stratum $S\hookrightarrow M_{0}$ is a Poisson embedding with respect to the bracket $\{-,-\}_{0}$.
Such a stratified symplectic structure has been applied successfully to describing the geometry and topology of many moduli spaces (cf.  \cite{GHJW97,Hu95}).
In the singular case, it is worth noting that there exist various definitions of reduction with different emphasises (cf. \cite{AJG90}).

Note that the category of symplectic Hamiltonian $G$-manifolds is not closed under the process of symplectic reduction.
On the one hand, Arms--Cushman--Gotay pointed out that the Poisson bracket $\{-,-\}_{0}$ is non-degenerate and hence gives a \lq\lq \emph{symplectic structure}" on $M_{0}$.
On the other hand, the structure theorem \cite[Theorem 2.1]{SL91} shows that the original symplectic form $\omega$ on $M$ induces an ordinary symplectic form on each stratum of $M_{0}$.
So a natural problem is:
\begin{prob}\label{pro}
On the singular symplectic quotient $M_{0}$, what should a symplectic form be?
\end{prob}
As mentioned in \cite{BGL05}, it is an old question to define the symplectic structure on a singular space, for instance, a singular algebraic variety.
Since the reduced space $M_{0}$ is highly singular, there is no obvious differential-geometric method to define symplectic form on it.
Comparing with the symplectic structure, the Poisson structure is a purely algebraic structure which can be extended to singular spaces easily.
Working with the Poisson algebra structure, the symplectic structure on $M_{0}$ can be considered as a Poisson bracket on $\C^{\infty}(M_{0})$ which induces a usual symplectic structure on each stratum, see \cite[Definition 1.12]{SL91} and \cite[\S\,2.6]{Pf01}.
Recently, Mj--Sen \cite[Definition 4.1]{MS22} proposed an extrinsic definition of symplectic structure on a Whitney stratified space, which coincides with the stratified symplectic structure in \cite{SL91}.

In complex geometry, the problem of generalizing the definition of K\"{a}hler structure to the singular case has undergone a long history.
The first generalization in this direction can be traced back at least to Grauert \cite{Gr62} who introduced the K\"{a}hler metric on a complex analytic space.
Subsequently,  Moishezon \cite{Mo75} gave another concept of K\"{a}hler complex space, which was refined by Fujiki \cite{Fu78} and Varouchas  \cite{Va89}.
A stratified version of K\"{a}hler complex space was introduced by Heinzner--Huckleberry--Loose \cite{HHL94}; moreover, they studied the structure of symplectic quotient via the K\"{a}hlerian extension of a symplectic manifold.
Particularly, they showed that the stratified symplectic structure on the reduced space coincides with the restriction of the stratified K\"{a}hler structure on the quotient of its K\"{a}hlerian extension.
For a singular toric variety, Burns--Guillemin--Lerman \cite{BGL08} considered the K\"{a}hler structure on it and gave a K\"{a}hler potential formula which generalizes the formula in \cite{Gui94} for smooth toric varieties.
In \cite{HS20}, Heinzner--Stratmann proved that the symplectic reduction
of a K\"{a}hler manifold is a reduced normal complex space which admits a natural K\"{a}hler structure induced from the original K\"{a}hler metric.

Observe that the singularity of $M_{0}$ is a composite of two kinds of singularities: the intersection singularities of $Z=\mu^{-1}(0)$ and the quotient singularities of $M/G$.
In the framework of derived algebraic geometry, Calaque \cite[\S\,2.1.2]{Ca18} pointed out that the derived symplectic reduced space of a symplectic Hamiltonian $G$-manifold admits a natural \emph{0-shifted symplectic form} in the sense of Pantev--To\"{e}n--Vaqui\'{e}--Vezzosi \cite{PTVV13}.

Heuristically, in the stratified setting a \lq\lq \emph{symplectic form}" should canonically induce an ordinary symplectic form on each stratum.
Recall that a differential 2-form $\omega$ on an even-dimensional smooth manifold $M$ is non-degenerate, if and only if the induced morphism from the tangent bundle to the cotangent bundle
\begin{equation}\label{bd-iso}
\omega^{\natural}:TM\longrightarrow T^{\ast}M
\end{equation}
is an isomorphism.
If we view $M$ as a reduced local $\mathcal{C}^{\infty}$-ringed space with the structure sheaf of smooth functions $\C^{\infty}_{M}$,
then the sheaves of smooth sections for $TM$ and $T^{\ast}M$ coincide with the tangent sheaf $\mathcal{T}_{M}$ and the cotangent sheaf $\Omega^{1}_{M}$, respectively.
In the language of sheaf theory, the bundle map \eqref{bd-iso} is an isomorphism if and only if so is the associated \emph{sheaf morphism}
\begin{equation*}
\omega^{\flat}:\mathcal{T}_{M}\longrightarrow \Omega^{1}_{M}.
\end{equation*}
Before finding a meaningful definition of symplectic form on a stratified space, we need to understand what the smooth structure analogue is on it.
Observe that there is no good tangent bundle theory for a stratified space for the presence of singular strata, and the local structure of a stratified space changes from point to point.
Based on the works \cite{Hi69,Pf01}, we can equip a stratified space with a smooth structure and makes it into a reduced local $\C^{\infty}$-ringed space.
This indicates a \emph{sheaf-theoretic} definition of symplectic form on a stratified spaces from the viewpoint of $\C^{\infty}$-algebraic geometry (cf. \cite{Jo19}).

\subsection{Summary of the result}
In this paper, by taking a sheaf-theoretic approach, we introduce three different definitions of symplectic structures in the stratified setting: a notion of symplectic form on a smooth stratified pseudomanifold (Definition \ref{sym-spmd}), an indirect definition of symplectic form on the quotient space of a smooth stratified pseudomanifold by a smooth $G$-action (Definition \ref{ind-ss}), and the cohomologically symplectic structure (Definition \ref{c-sym}).
A benefit of the language of sheaves is that it allows us to carry out a unified treatment of smooth manifolds and stratified spaces in the category of local $\C^{\infty}$-ringed spaces.
From the viewpoint of $\C^{\infty}$-ringed space theory, we can endow the level set of a moment map (resp. the symplectic quotient) with a natural smooth structure which makes it into a local $\C^{\infty}$-ringed space.
The main result of this paper is the following

\begin{thm}\label{thm1}
Let $G$ be a compact connected Lie group and $(M,\omega)$ a Hamiltonian $G$-manifold with proper moment map
$\mu:M\rightarrow\mathfrak{g}^{\ast}$.
Then we have:
\begin{itemize}
  \item [(i)] the pullback $\underline{\imath}^{\ast}\omega$ gives rise to a symplectic structure on the quotient space $M_{0}=Z/G$ in the sense of Definition \ref{ind-ss},
      where $\underline{\imath}: (M, \C^{\infty}_{M})\hookrightarrow (Z, \C^{\infty}_{Z})$ is the inclusion;
  \item [(ii)] there exists a unique cohomologically symplectic structure $[\omega_{0}]\in\check{\mathrm{H}}^{2}(M_{0}, \underline{\mathbb{R}})$ such that $$\underline{\pi}^{\ast}([\omega_{0}])=\underline{\imath}^{\ast}([\omega]),$$
      where $\underline{\pi}: (Z, \C^{\infty}_{Z})\rightarrow (M_{0}, \C^{\infty}_{M_{0}})$ is the quotient map and $[\omega]$ is considered as the \v{C}ech cohomology class via the de Rham theorem $\check{\mathrm{H}}^{2}(M,\underline{\mathbb{R}})\cong \mathrm{H}^{2}_{\mathrm{DR}}(M)$;
  \item [(iii)] the Poisson bracket introduced by Arms--Cushman--Gotay induces an injective sheaf morphism
      $$
      \chi^{\ddag}:\Omega^{1}_{M_{0},\mathrm{tf}}\longrightarrow\mathcal{T}_{M_{0}},
      $$
      where $\Omega^{1}_{M_{0},\mathrm{tf}}$ is the torsion-free cotangent sheaf of $M_{0}$ and $\mathcal{T}_{M_{0}}$ is the tangent sheaf.
\end{itemize}
\end{thm}

This result is inspired by the structure theorem for singular reduced spaces \cite{SL91} and the treatment for singular symplectic quotient from complex geometry point of view via the Hamiltonian K\"{a}hlerian extension \cite{HHL94,HS20}.
Our motivation for introducing symplectic stratified pseudomanifold is to present a more algebro-geometric description of the symplectic structure on a singular symplectic reduced space.
Most of the important notions of symplectic manifolds can be extended to symplectic stratified pseudomanifolds.
Specifically, the point we want to make here is that the study of symplectic and complex stratified pseudomanifolds may be of independent interest.

\subsection{Structure of the paper}
In Section \ref{pre}, we present a brief review on some basic notations and the stratification structures. We devote Section \ref{smo} to the definition of $\C^{\infty}$-singular atlas on stratified spaces and give some basic properties of smooth stratified pseudomanifolds.
In Section \ref{sym-kah}, we introduce the notion of symplectic/K\"{a}hler structures on smooth stratified pseudomanifolds.
In Section  \ref{sig-kah}, as a special example of K\"{a}hler stratified pseudomanifold, we study the singular K\"{a}hler spaces.
In Section \ref{sym-quo}, we introduce an indirect definition of symplectic structure on the quotient space of a smooth $G$-stratified pseudomanifold, and we examine the singular symplectic quotients.
In Appendix \ref{alg-pre}, we present the algebro-geometric preliminaries of $\mathcal{C}^{\infty}$-ringed spaces.
In Appendix \ref{smo-sub}, we briefly review the definitions of smooth subcartesian structure and vector pseudobundle due to Aronszajn and Marshall, respectively.

\subsection*{Acknowledgements}
I am grateful to Professor Reyer Sjamaar and Professor Yi Lin for many helpful discussions on the topic of group actions.
I would like to express my great gratitude to Professor Guosong Zhao, Professor Xiaojun Chen, and Professor Bohui Chen for their constant encouragements and supports.
This work began when I was visiting the Department of Mathematics of Cornell University from Sept 2017 to Sept 2019, and most of this work was completed during my visits to the School of Mathematics of Sichuan University, and Tianyuan Mathemtical Center in Southwest China during the winters of 2020 and 2021.
I sincerely thank these institutes for hosting my research visits.
This work is partially supported by the National Nature Science Foundation of China (Grant No. 12271225 and 11701051).


\section{Preliminaries}\label{pre}
In this section we establish some notational conventions and present a brief review of stratification structures.
\subsection{Notation and conventions}\label{not-con}

Throughout this paper, \emph{topological spaces} are second countable and Hausdorff.
By a \emph{local ringed space} we shall mean a pair $(X,\mathcal{O}_{X})$ consisting of a topological space $X$ and a sheaf $\mathcal{O}_{X}$ of commutative local rings.
A $k$-\emph{space} is a local ringed space $(X,\mathcal{O}_{X})$ such that the structure sheaf $\mathcal{O}_{X}$ is a sheaf of algebras over $k$, where $k$ equals $\mathbb{R}$ or $\mathbb{C}$.

If $G$ is a compact connected Lie group with Lie algebra $\mathfrak{g}$, the complexification of $G$ is denoted by $G^{\mathbb{C}}$.
Given a symplectic manifold $(M,\omega)$ on which $G$ acts in a Hamiltonian fashion, the moment map from $M$ to $\mathfrak{g}^{\ast}$ is denoted by $\mu$.
The tuple $(M,\omega,G,\mu)$ is then called a \emph{symplectic Hamiltonian $G$-manifold}.
We denote by $M_{0}=\mu^{-1}(0)/G$ the \emph{symplectic quotient} at zero.
Assume that $J$ is a $G$-invariant complex structure on $M$.
By a \emph{K\"{a}hler Hamiltonian $G$-manifold} we mean a tuple $(M,ds^{2},G,\mu)$ such that:
\begin{itemize}
  \item [(i)] $ds^{2}$ is a $G$-invariant K\"{a}hler metric on $M$;
  \item [(ii)] $G^{\mathbb{C}}$ acts on $M$ holomorphically;
  \item [(iii)] $G$ acts on $(M, -\mathrm{Im}\,ds^{2})$ in a Hamiltonian fashion with the moment map $\mu$.
\end{itemize}

\subsection{Stratification structures}

The stratification structure naturally arises in the study of singular spaces such as algebraic varieties, analytic varieties, and singularities of smooth mappings.
Intuitively, the idea of a stratification is to decompose a topological space into a disjoint union of manifolds with different dimensions in a nice way.
Let us begin with the definition of a decomposition on a topological space.
\begin{defn}\label{decom}
Let $X$ be a paracompact Hausdorff topological space and $\mathscr{I}$ a partially ordered set with order relation denoted by $\leq$.
An \emph{$\mathscr{I}$-decomposition} of $X$ is a locally finite collection of disjoint locally closed subsets called pieces $S_{i}\subset X$ (one for each $i\in \mathscr{I})$ satisfies the following conditions:
\begin{enumerate}
  \item [(i)] $X=\coprod_{i\in \mathscr{I}}S_{i}$
        and each piece $S_{i}$ is a smooth manifold in the induced topology;
  \item [(ii)](Condition of frontier)
  $S_{i}\cap \overline{S}_{j}\neq\emptyset\Leftrightarrow S_{i}\subset \overline{S}_{j}\Leftrightarrow i\leq j$.
\end{enumerate}
\end{defn}

Set
$\mathcal{S}_{X}=\{S_{i}\subset X\mid i\in \mathscr{I}\}$.
The pair $(X,\mathcal{S}_{X})$ is called an \emph{$\mathscr{I}$-decomposed space}.
A piece $S_{i}\in \mathcal{S}_{X}$ is called a \emph{stratum} of $X$.
We say that a stratum $S\in\mathcal{S}_{X}$ is \emph{maximal}  (resp. \emph{minimal}) if it is open (resp. closed) in $X$.
The \emph{dimension} of a decomposed space $(X,\mathcal{S}_{X})$ is defined to be the dimension of the maximal stratum.
In general, a stratum $S\in \mathcal{S}_{X}$ is \emph{regular} or \emph{principal} if it is open in $X$, otherwise it is called a \emph{singular} stratum.
For any strata $S_{i}$ and $S_{j}$, we write $S_{i}\leq S_{j}$ if $S_{i}\subset\overline{S}_{j}$;
in particular, if $S_{i}\leq S_{j}$ and $S_{i}\neq S_{j}$
then we write $S_{i}< S_{j}$.

Let $(X,\mathcal{S}_{X})$ be a decomposed space.
The \emph{depth} of a stratum $S\in \mathcal{S}_{X}$ is defined by
\[
\mathrm{depth}_{X}(S):=\sup\{n\mid  S=S_{0}<S_{1}<\cdots<S_{n}\}
\]
where $S_{1},\cdots,S_{n}$ are strata of $X$.
The \emph{depth} of $(X,\mathcal{S}_{X})$ is defined to be
\[
\mathrm{depth}\,(X):=\sup_{i\in \mathscr{I}}\mathrm{depth}_{X}(S_{i}).
\]
Define $X^{i}=\bigcup_{j\leq i}S_{j}$, and $X^{i}$ is called a \emph{skeleton} of $X$.
Then there exists a finite filtration of skeletons
\[
X=X^{m}\supset X^{m-1}\supset\cdots\supset X^{0}\supset X^{-1}=\emptyset,
\]
where $m=\mathrm{depth}\,(X)$.
By definition, we have $m\leq \mathrm{dim}\,X$ and $X-X^{m-1}$ is dense in $X$.
To make the strata of a decomposition fit together in a nicer way, we have the following definition which is a recursion on the depth of the space.
\begin{defn}\label{t-l-tri}
A 0-dimensional stratified space $X$ is a discrete set of points with the trivial filtration
$X=X^{0}\supset X^{-1}=\emptyset$.

An $n$-dimensional \emph{stratified space} is a decomposed space $(X,\mathcal{S}_{X})$ satisfying the
\emph{topological local triviality}:

For each point $x$ in the stratum $S_{i}=X^{i}-X^{i-1}$ there exist an open neighborhood $U$ of $x$ in $X$, an open ball $\mathbb{B}^{i}$ of $x$ in $S_{i}$, a compact $(n-i-1)$-dimensional stratified space $L$, and a homeomorphism
$$
\phi:\mathbb{B}^{i}\times c(L)\longrightarrow U
$$
which takes each $\mathbb{B}^{i}\times c(L^{j-1})$ homeomorphically onto $X^{i+j}\cap U$.
Here $L$ is called the \emph{link} of $x$.
A decomposition $\mathcal{S}_{X}$ satisfying the topological local triviality is called a \emph{stratification} on $X$.
\end{defn}

\section{$\C^{\infty}$-stratified pseudomanifolds}\label{smo}
To study the differential geometry of singular spaces,
one needs to generalize the differential structure to the singular case.
In the spirit of smooth manifolds, Aronszajn \cite{Ar67} gave a definition of \emph{subcartesian space} in term of singular atlas, which was subsequently developed by Marshall \cite{Ma75} and
Aronszajn--Szeptycki \cite{AS80}.
A related notion was independently introduced by Spallek \cite{Sp69}.
In a more general setting, Sikorski \cite{Si67} introduced the notion of \emph{differential space}.
Paralleling Grothendieck's scheme theory, the theory of $\C^{\infty}$-differentiable spaces was developed in \cite{NS03}.
To the best of our knowledge, in the context of algebraic geometry, it was Hironaka \cite{Hi69} who first introduced the notion of \emph{differentiably stratified space} aiming at characterizing the equisingularity of algebraic varieties proposed by Zariski.
Following Aronszajn's subcartesian spaces, Pflaum presented a similar definition for stratified
spaces; moreover, under the viewpoints of analysis and geometry, he gave a systemical study of smooth stratified spaces in the monograph \cite{Pf01}.

\subsection{$\C^{\infty}$-structures on stratified spaces}
The purpose of this section is to present a brief review of $\C^{\infty}$-charts for stratified spaces introduced in \cite{Hi69, Pf01}.
\begin{defn}\label{sig-chart}
Let $(X,\mathcal{S}_{X})$ be a stratified (decomposed) space of dimension $n$ and $x$ an arbitrary point of $X$.
A \emph{$\C^{\infty}$-chart} around $x$ is a pair $(U,\phi_{U})$ satisfying the following conditions:
\begin{itemize}
  \item [(i)] $U$ is an open neighborhood of $x$;
  \item [(ii)] $\phi_{U}:U\rightarrow
      \mathrm{im}\,(\phi_{U})\subset\mathbb{R}^{m}$ $(m\geq n)$
      is a homeomorphism such that the image $\mathrm{im}\,(\phi_{U})$ is a locally closed subset of $\mathbb{R}^{m}$ and for each stratum $S\in\MC_{X}$ the restriction map
      $\phi_{U}:U\cap S\rightarrow\phi_{U}(U\cap S)$
      is a diffeomorphism.
\end{itemize}
In particular, if the assumption of locally closed property for $\mathrm{im}\,(\phi_{U})$ is dropped, then we call $(U,\phi_{U})$ a \emph{weak $\C^{\infty}$-chart}.
\end{defn}
In the spirit of smooth manifold, we have
\begin{defn}(\cite[Definition 1.3.3]{Pf01})\label{s-atlas}
Let $(X,\mathcal{S}_{X})$ be a stratified (decomposed) space.
If a given set of $\C^{\infty}$-singular charts
$$
\mathfrak{A}=
\{(U,\phi_{U}),(V,\phi_{V}),(W,\phi_{W}),\cdots\}
$$
on $X$ satisfies the following conditions, then we call $\mathfrak{A}$ a
\emph{$\C^{\infty}$-structure} on $X$:
\begin{itemize}
  \item [(i)] $\mathfrak{A}$ forms a $\C^{\infty}$-atlas on $X$ ;
  \item [(ii)] $\mathfrak{A}$ is \emph{maximal}, i.e., if a $\C^{\infty}$-singular chart $(U^{\prime},\phi^{\prime})$ is compatible with all coordinate charts in $\mathfrak{A}$ (in the sense of Definition \ref{ssp}), then $(U^{\prime},\phi^{\prime})\in\mathfrak{A}$.
\end{itemize}

If all $\C^{\infty}$-singular charts in $\mathfrak{A}$ are \emph{weak}, we say that $\mathfrak{A}$ is a \emph{weak $\C^{\infty}$-structure} on $X$.
\end{defn}

Now we introduce the notion of smooth stratified pseudomanifold.
\begin{defn}\label{s-s-pseodomfd}
We say that a stratified space $(X,\mathcal{S}_{X})$ is a \emph{smooth stratified pseudomanifold},
if there exists a $\C^{\infty}$-structure on $X$.
\end{defn}

We say that a stratified space $(X,\mathcal{S}_{X})$ is \emph{Euclidean embeddable}, if it admits a global $\C^{\infty}$-chart
$\phi:X\rightarrow\mathbb{R}^{N}$.
Akin to a smooth manifold, the condition (i) in Definition \ref{s-atlas} is primary.
It can be shown that if a set $\mathfrak{A}^{\prime}$ of $\C^{\infty}$-singular charts satisfies (i), then there is a unique $\C^{\infty}$-structure $\mathfrak{A}$ such that $\mathfrak{A}^{\prime}\subset\mathfrak{A}$.
Actually, denote by $\mathfrak{A}$ the set of all $\C^{\infty}$-singular charts which are compatible with every chart in $\mathfrak{A}^{\prime}$,
then $\mathfrak{A}$ is a $\C^{\infty}$-structure uniquely determined by $\mathfrak{A}^{\prime}$.
For this reason, to construct a $\C^{\infty}$-structure on $X$, we only need construct an open covering of $X$ by compatible $\C^{\infty}$-singular charts.

A continuous function $f:X\rightarrow\mathbb{R}$ is \emph{smooth}, if for any $\C^{\infty}$-chart $\phi:U\rightarrow\mathbb{R}^{m}$ there is a smooth function $F$ from $\mathbb{R}^{m}$ to $\mathbb{R}$ such that $f|_{U}=F\circ\phi$.
We denote by $\C^{\infty}_{X}$ the sheaf of smooth functions on $X$ and call it the \emph{structure sheaf} of $(X,\mathcal{S}_{X})$.
For any $x\in X$, let $\mathfrak{m}_{x}$ be the ideal of germs of smooth functions vanishing at $x$.
It is noteworthy that $\mathfrak{m}_{x}$ is the unique maximal ideal of the stalk $\C^{\infty}_{X,x}$ and therefore $(X,\C^{\infty}_{X})$ forms a \emph{reduced local $\mathcal{C}^{\infty}$-ringed space} (see Definition \ref{inf-r-sps}).
For the sake of brevity, we also say that a triple $\mathcal{X}=(X,\mathcal{S}_{X},\C^{\infty}_{X})$
is a smooth stratified pseudomanifold.
In particular, we have the following
\begin{thm}\label{fine}
The structure sheaf $\C^{\infty}_{X}$ of a smooth stratified pseudomanifold $\mathcal{X}$ is a fine sheaf.
\end{thm}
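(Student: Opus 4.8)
The plan is to show that $\C^{\infty}_{X}$ admits partitions of unity subordinate to any open cover, which is the defining property of a fine sheaf of $\mathbb{R}$-algebras (equivalently, that $\C^{\infty}_{X}$ is a module over a soft sheaf of rings). Since $X$ is second countable, Hausdorff and paracompact, every open cover $\{U_{\alpha}\}$ admits a locally finite refinement; hence it suffices to produce, for each point $x\in X$ and each open neighborhood $V$ of $x$, a smooth bump function $\chi\in\C^{\infty}_{X}(X)$ with $\chi\equiv 1$ near $x$ and $\mathrm{supp}\,\chi\subset V$, and then patch these together by the usual paracompactness argument. Thus the entire theorem reduces to the \emph{existence of smooth bump functions} on $X$.

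First I would fix $x\in X$ and a $\C^{\infty}$-chart $(U,\phi_{U})$ around $x$ with $U\subset V$, where $\phi_{U}:U\to\mathrm{im}(\phi_{U})\subset\mathbb{R}^{m}$ is a homeomorphism onto a locally closed subset, restricting to a diffeomorphism on each stratum. Choose an open ball $B\subset\mathbb{R}^{m}$ centered at $\phi_{U}(x)$ with $\overline{B}\cap\mathrm{im}(\phi_{U})$ compact and contained in $\phi_{U}(U)$ (possible by local closedness, after shrinking $U$). Take a standard smooth bump $\beta:\mathbb{R}^{m}\to[0,1]$ equal to $1$ on a smaller ball $B'$ and supported in $B$. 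Then $\chi:=(\beta\circ\phi_{U})$ on $U$, extended by zero outside $U$, is the desired function: by the definition of smoothness on $X$ (a continuous function is smooth iff locally of the form $F\circ\phi$ for a smooth $F$ on Euclidean space), $\chi$ is smooth on $U$; it is identically zero on the open set $X\setminus\phi_{U}^{-1}(\overline{B})$; and these two opens cover $X$, so $\chi$ is a well-defined global smooth function. Its support lies in $\phi_{U}^{-1}(\overline{B}\cap\mathrm{im}(\phi_{U}))\subset U\subset V$, and $\chi\equiv 1$ on the neighborhood $\phi_{U}^{-1}(B')$ of $x$.

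With bump functions in hand, the rest is the classical construction: given a locally finite open cover $\{U_{\alpha}\}$, shrink it to $\{W_{\alpha}\}$ with $\overline{W_{\alpha}}\subset U_{\alpha}$, cover each $\overline{W_{\alpha}}$ by finitely many charts and assemble a smooth function $\psi_{\alpha}\geq 0$ with $\psi_{\alpha}>0$ on $\overline{W_{\alpha}}$ and $\mathrm{supp}\,\psi_{\alpha}\subset U_{\alpha}$; the sum $\sum_{\alpha}\psi_{\alpha}$ is locally finite, smooth, and strictly positive, so $\rho_{\alpha}:=\psi_{\alpha}/\sum_{\beta}\psi_{\beta}$ is a smooth partition of unity subordinate to $\{U_{\alpha}\}$. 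Multiplication by the $\rho_{\alpha}$ gives endomorphisms of $\C^{\infty}_{X}$ (as a sheaf of modules over itself) supported in the $U_{\alpha}$ and summing to the identity, which is precisely fineness. The main obstacle I anticipate is the gluing step at the chart level: a priori the bump $\chi$ is only defined on a single chart $U$, and one must verify it extends smoothly by zero across the frontier of the strata meeting $\partial U$; this is handled by choosing $\overline{B}\cap\mathrm{im}(\phi_{U})$ compact inside $\phi_{U}(U)$ so that $\mathrm{supp}\,\chi$ is a closed subset of $X$ sitting well inside $U$, making the overlap argument on the two-element open cover $\{U,\,X\setminus\mathrm{supp}\,\chi\}$ immediate — no delicate stratum-wise compatibility is actually needed once the support is pushed into the interior of the chart domain.
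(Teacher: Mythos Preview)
Your argument is correct and is precisely the standard bump-function/partition-of-unity construction that the paper's cited references (Pflaum, Theorem~1.3.13, and Marshall, Proposition~1.2) carry out: pull back a Euclidean bump through a singular chart, use local closedness of $\mathrm{im}(\phi_{U})$ to guarantee the support is compact inside the chart domain, extend by zero, and then run the usual paracompactness machinery. The paper itself gives no independent proof but only defers to those references, so your write-up is in effect a faithful expansion of the same approach.
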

\begin{proof}
See the proof of \cite[Theorem 1.3.13]{Pf01} or the proof of \cite[Proposition 1.2]{Ma75} in a more general setting.
\end{proof}

We now consider the morphism between smooth stratified pseudomanifolds.
\begin{defn}\label{str-sm-mp}
Let $\mathcal{X}=(X,\mathcal{S}_{X},\C^{\infty}_{X})$
and
$\mathcal{Y}=(Y,\mathcal{S}_{Y},\C^{\infty}_{Y})$
be two smooth stratified pseudomanifolds.
A continuous map $f:X\rightarrow Y$ is called a \emph{smooth map} from $\mathcal{X}$ to $\mathcal{Y}$, if it satisfies the following conditions:
\begin{itemize}
  \item [(i)] $f$ is stratum-preserving;
   i.e., it maps a stratum of $\mathcal{S}_{X}$ to a stratum of $\mathcal{S}_{Y}$;
  \item [(ii)] for any open subset $V$ in $Y$ and any smooth function $g\in\C^{\infty}_{Y}(V)$ the function $g\circ f$ is an element of $\C^{\infty}_{X}(f^{-1}(V))$.
\end{itemize}
\end{defn}

It is noteworthy that smooth stratified pseudomanifolds with smooth maps form a category which contains the category of smooth manifolds as a full subcategory.
In particular, let $Y$ be a \emph{smooth manifold}.
A \emph{proper embedding} of $\mathcal{X}$ into the smooth manifold $Y$ is a smooth, injective, and proper map $f:X\rightarrow Y$ such that the pullback of smooth functions
$f^{\ast}:\C^{\infty}(Y)
\rightarrow\C^{\infty}(X)$
is surjective.
From a $\C^{\infty}$-algebro-geometric viewpoint, a smooth stratified pseudomanifold is a reduced local $\mathcal{C}^{\infty}$-ringed space in the sense of Definition \ref{inf-r-sps}.
Observe that a smooth stratified map
$f:\mathcal{X}\rightarrow\mathcal{Y}$
determines a morphism of sheaves of $\mathcal{C}^{\infty}$-rings denoted by
$f_{\sharp}:\C^{\infty}_{Y}\rightarrow f_{*}\C^{\infty}_{X}$.
As a result, the pair $\underline{f}=(f,f_{\sharp})$ becomes a morphism of local $\mathcal{C}^{\infty}$-ringed spaces.


To conclude this subsection, we state some examples of smooth stratified pseudomanifolds, which appear naturally in differential and algebraic geometries.
\begin{ex}[Smooth $G$-manifolds]\label{G-mfd}
Let $M$ a smooth manifold on which $G$ acts smoothly and effectively.
For each closed subgroup $H$ of $G$ we can define a subspace $M_{(H)}$ which contains all points of $M$ such that the stabilizers are conjugate to $H$.
In general, $M_{(H)}$ is not a submanifold; however, its connected components are submanifolds with different dimensions.
Consequently, we can decompose $M$ as
\begin{equation*}\label{orbit-type}
M=\coprod_{H<G}M_{(H)}
\end{equation*}
which yields a decomposition $\mathcal{S}_{\mathrm{orb}}$ on $M$ and makes it into a Whitney stratified space (cf. \cite[Theorem 4.3.7]{Pf01}).
Suppose that
$$\mathfrak{A}=
\{(U_{\lambda},\phi_{\lambda})\,|\,\lambda\in\Lambda\}$$
is a $\C^{\infty}$-atlas of $M$.
Then, from definition, $\mathfrak{A}$ serves as $\C^{\infty}$-singular atlas of the decomposed space $(M,\mathcal{S}_{\mathrm{orb}})$ in the sense of Definition \ref{s-atlas}.
Observe that $(M,\mathcal{S}_{\mathrm{orb}})$ is a Whitney stratified space.
The topological local triviality holds necessarily.
It follows that $(M,\mathcal{S}_{\mathrm{orb}},\C^{\infty}_{M})$ is a smooth stratified pseudomanifold in the sense of Definition \ref{s-s-pseodomfd}.

Consider the orbit space $M/G$.
Let $\pi:M\rightarrow M/G$ be the quotient map.
There exists a natural decomposition $\mathfrak{S}_{M/G}$:
\begin{equation*}\label{orbit-type-1}
M/G=\coprod_{H<G}M_{(H)}/G
\end{equation*}
satisfying the Whitney's Condition (B) (cf. \cite[Theorem 4.4.6]{Pf01}).
Moreover, the smooth structure on $M$ induces a canonical smooth structure on $M/G$ given by
$$
\C^{\infty}_{M/G}(U)
=\C^{\infty}_{M}(\pi^{-1}(U))^{G},
$$
where $U$ is an open subset of $M/G$ under the quotient topology.
This implies that the orbit space $M/G$ is also a smooth stratified pseudomanifold.
\end{ex}

\begin{ex}[Symplectic quotients]\label{sym-quot}
Let $(M,\omega,G,\mu)$ be a compact Hamiltonian $G$-manifold.
If 0 is a \emph{regular} value of the moment map $\mu$, then the level set $Z=\mu^{-1}(0)$ is a closed submanifold of $M$ with a locally free $G$-action.
The \emph{symplectic quotient} $M_{0}=Z/G$ is an orbifold and we have the \emph{inclusion-quotient diagram:}
$$
\xymatrix{
  Z \ar[d]_{\pi} \ar[r]^{\imath} &   M     \\
  M_{0}                     }
$$
where $\pi$ is the orbit map and $\imath$ is the inclusion map.
Since the $G$-action on $Z$ is locally free the symplectic quotient $M_{0}$ is an orbifold.
According to the symplectic reduction procedure, there exists a unique non-degenerate 2-form $\omega_{0}$ on $M_{0}$ such that $\pi^{\ast}\omega_{0}=\imath^{\ast}\omega$.
In general, without the regularity assumption the space $M_{0}$ is a \emph{stratified symplectic space} in the sense of Sjamaar--Lerman \cite{SL91} and with a canonical decomposition given by
$$
M_{0}=\coprod_{H<G}\bigl(M_{(H)}\cap Z\bigr)/G.
$$
In addition, $M_{0}$ admits a natural $\C^{\infty}$-singular atlas and can be embedded into some Euclidean space $\mathbb{R}^{N}$ as a \emph{Whitney stratified space}, see \cite[Section 6]{Pf01b} and \cite[Theorem 6.7]{SL91}.
So $M_{0}$ becomes a smooth stratified pseudomanifold.
\end{ex}

\begin{ex}[Complex analytic varieties]\label{c-a-v}
Let $M$ be a complex manifold with complex dimension $n$ and $V$ an analytic subvariety of $M$.
Set $V^{\ast}$ the locus of smooth points of $V$ and $V_{s}=V-V^{\ast}$ the locus of singular points.
It is worth noting that $V^{\ast}$ is a complex submanifold and $V_{s}$ is an analytic subvariety of $M$.
In general, given an analytic subvariety $V$ we may split it as
$V=V^{\ast}\cup V_{s}$.
Also, we may split $V_{s}$ into $(V_{s})^{\ast}$ and $(V_{s})_{s}$, etc.
As a result, setting $V_{1}=V^{\ast}$, $V_{2}=(V_{s})^{\ast}$, $V_{3}=((V_{s})_{s})^{\ast}$, etc. yields a partition
$$
V=V_{1}\cup V_{2}\cup V_{3}\cdots,
$$
where $V_{i}$ are complex submanifolds of $M$ satisfying
$$
\mathrm{dim}\,V=\mathrm{dim}\,V_{1}>\mathrm{dim}\,V_{2}>
\mathrm{dim}\,V_{3}>\cdots.
$$
Via splitting $V_{i}$ above into its connected components,
we get a refined partition which satisfies the condition of frontier in Definition \ref{decom} and therefore we get a \emph{decomposition} $\mathcal{S}_{V}$ of $V$ such that the Whitney's Condition (B) holds (cf. \cite[Theorem 19.2]{Wh65a}).
Given a $\C^{\infty}$-atlas of the ambient manifold
$$
\mathfrak{A}=
\bigl\{\phi_{\lambda}:U_{\lambda}\rightarrow
\mathbb{R}^{2n}\,\big|\,
\lambda\in\Lambda\bigr\}.
$$
Then for each $\lambda\in\Lambda$, the restriction
$
\tilde{\phi}_{\lambda}:V\cap U_{\lambda}\rightarrow
\mathbb{R}^{2n}
$
comprises a $\C^{\infty}$-singular chart on $V$.
As a result, the variety $V$ admits a natural $\C^{\infty}$-singular atlas inherited from $M$.
It follows that $(V,\mathcal{S}_{V},\C^{\infty}_{V})$ is a smooth stratified pseudomanifold.
In particular, when $M$ is the complex projective space $\mathbb{CP}^{n}$ we get that each projective variety $V$ in $\mathbb{CP}^{n}$ has a natural structure of smooth stratified pseudomanifold.
The smooth structure $\C^{\infty}_{V}$ described above is the smallest smooth structure on $V$ such that the inclusion map $i:V\hookrightarrow M$ is smooth.
\end{ex}

\subsection{Tangent pseudobundles}\label{t-ct-bundle}

Let $\mathcal{X}=(X,\mathcal{S}_{X},\C^{\infty}_{X})$
be a smooth stratified pseudomanifold with a fixed $\C^{\infty}$-atlas
$$
\mathfrak{A}_{X}=
\bigl\{\phi_{\lambda}:U_{\lambda}\rightarrow
\mathbb{R}^{n_{\lambda}}\,\big|\,\lambda\in\Lambda\bigr\}.
$$
Set-theoretically, one can define the tangent bundle of $X$ to be
\begin{equation*}\label{set-tx}
TX=\coprod_{S\in\MC_{X}}TS
\end{equation*}
together with a canonical footpoint projection
$$
\pi:TX\longrightarrow X
$$
by setting
$\pi|_{TS}:TS\rightarrow S$ to be the projection of the tangent bundle for $S$.
In the spirit of smooth vector bundle, the first thing is to assign a suitable topology to $TX$ such that $\pi$ becomes \emph{continuous}.
This can be realized as follows.

For any open subset $U$ of $X$, we define the set
$TU=\pi^{-1}(U)$.
Define the map
\begin{equation}\label{t-phi-lam}
T(\phi_{\lambda}):
TU_{\lambda}\longrightarrow
T\mathbb{R}^{n_{\lambda}}\cong\mathbb{R}^{2n_{\lambda}}
\end{equation}
by requiring
$
T(\phi_{\lambda})|_{TS\cap TU_{\lambda}}=
T(\phi_{\lambda}|_{S\cap U_{\lambda}})
$
for any stratum $S\in\MC_{X}$.
Endow $TX$ the \emph{coarsest topology} on $TX$ such that the set $TU$ is \emph{open} in $TX$ and the map \eqref{t-phi-lam} is \emph{continuous}.
Under this topology, the projection $\pi:TX\rightarrow X$ becomes continuous;
furthermore, for every stratum $S\in\MC_{X}$, the set $TS$ becomes a locally closed subset in $TX$ and $\pi$ restricts to a smooth map on $TS$.

In general, $TX$ is not a vector bundle unless $X$ is a smooth manifold;
however, the following result shows that $TX$ admits a natural structure of smooth vector pseudobundle.
\begin{thm}\label{pseu-v-b}
The $\C^{\infty}$-atlas $\mathfrak{A}_{X}$ induces a $\C^{\infty}$-subcartesian structure on $TX$ (see Definition \ref{ssp}).
Moreover, the triple $(TX,\pi,X)$ is a $\C^{\infty}$-vector pseudobundle (see Definition \ref{v-p-bundle}).
\end{thm}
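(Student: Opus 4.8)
The plan is to use the family of tangent charts $\{T(\phi_{\lambda}):TU_{\lambda}\to\mathbb{R}^{2n_{\lambda}}\}_{\lambda\in\Lambda}$ as a subcartesian atlas on $TX$, to check that it satisfies the axioms of Definition \ref{ssp}, and then to verify the defining properties of a $\C^{\infty}$-vector pseudobundle (Definition \ref{v-p-bundle}) by computing in these charts, where every structure map becomes the restriction to a locally closed subset of a smooth --- in fact linear --- map between Euclidean spaces. First I would show that each $T(\phi_{\lambda})$ is a homeomorphism onto a locally closed subset of $\mathbb{R}^{2n_{\lambda}}$. It is injective, because on $TS\cap TU_{\lambda}$ it equals $T(\phi_{\lambda}|_{S\cap U_{\lambda}})$, a bijection of manifolds; it is continuous by the very definition of the topology on $TX$; and its image is $\coprod_{S}T\bigl(\phi_{\lambda}(S\cap U_{\lambda})\bigr)$, the disjoint union of the ordinary tangent manifolds of the submanifolds $\phi_{\lambda}(S\cap U_{\lambda})\subset\mathbb{R}^{n_{\lambda}}$, which one checks to be locally closed in $\mathbb{R}^{2n_{\lambda}}$ using that $\phi_{\lambda}(U_{\lambda})$ and the strata are locally closed (cf. \cite{Pf01}); in the weak-chart case one only needs an arbitrary subset, and the argument is unchanged. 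Continuity of the inverse will follow from the continuity of the transition maps, established next. One also records that $TX$ is Hausdorff and second countable, inherited from $X$ since $TX$ is covered by the countably many $TU_{\lambda}$.

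Next I would verify that the transition maps of the tangent atlas are smooth in the subcartesian sense. Compatibility of $(U_{\lambda},\phi_{\lambda})$ and $(U_{\mu},\phi_{\mu})$ as $\C^{\infty}$-charts means that $\phi_{\mu\lambda}:=\phi_{\mu}\circ\phi_{\lambda}^{-1}$ is the restriction of a smooth map $F_{\mu\lambda}$ between open subsets of $\mathbb{R}^{n_{\lambda}}$ and $\mathbb{R}^{n_{\mu}}$ and restricts to a diffeomorphism on each stratum piece; hence, stratum by stratum,
\[
T(\phi_{\mu})\circ T(\phi_{\lambda})^{-1}(x,v)=\bigl(\phi_{\mu\lambda}(x),\,d(\phi_{\mu\lambda})_{x}v\bigr)=\bigl(F_{\mu\lambda}(x),\,d(F_{\mu\lambda})_{x}v\bigr),
\]
and the last expression is a genuine smooth map between open subsets of $\mathbb{R}^{2n_{\lambda}}$ and $\mathbb{R}^{2n_{\mu}}$ (it depends on $F_{\mu\lambda}$ only through its restriction to the stratum, so it is insensitive to the chosen ambient extension). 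Thus each transition map is the restriction of a smooth map; applying the same to $T(\phi_{\lambda})\circ T(\phi_{\mu})^{-1}$ shows it is a $\C^{\infty}$-diffeomorphism onto an open subset of its image. Hence $\{T(\phi_{\lambda})\}$ is a $\C^{\infty}$-subcartesian atlas on $TX$, and since any two $\C^{\infty}$-atlases of $X$ sit inside the common maximal $\C^{\infty}$-structure with smooth transitions between their charts, the induced subcartesian structure on $TX$ does not depend on the choice of $\mathfrak{A}_{X}$.

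It then remains to check the pseudobundle axioms of Definition \ref{v-p-bundle} (following Marshall \cite{Ma75}). The projection $\pi$ is smooth, since in tangent coordinates it is the restriction of the linear projection $\mathbb{R}^{n_{\lambda}}\times\mathbb{R}^{n_{\lambda}}\to\mathbb{R}^{n_{\lambda}}$; each fibre $\pi^{-1}(x)=T_{x}S$, with $S$ the stratum through $x$, is a finite-dimensional $\mathbb{R}$-vector space; and the fibrewise addition $TX\times_{X}TX\to TX$, the scalar multiplication $\mathbb{R}\times TX\to TX$ and the zero section $X\to TX$ are, read in the chart $T(\phi_{\lambda})$, the restrictions of $((x,v),(x,w))\mapsto(x,v+w)$, $(t,(x,v))\mapsto(x,tv)$ and $x\mapsto(x,0)$ respectively, hence restrictions of linear maps and therefore smooth in the subcartesian sense. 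Moreover the chart $T(\phi_{\lambda})$ exhibits $TU_{\lambda}$ as a subcartesian subspace of the product $\phi_{\lambda}(U_{\lambda})\times\mathbb{R}^{n_{\lambda}}$ that intertwines $\pi$ with the first projection and is fibrewise linear, i.e. a local trivialization in the pseudobundle sense. Collecting these verifications yields that $(TX,\pi,X)$ is a $\C^{\infty}$-vector pseudobundle.

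The step I expect to be the main obstacle is the purely topological part of the first paragraph. The topology on $TX$ was introduced only indirectly, as the coarsest one making the sets $TU$ open and the maps $T(\phi_{\lambda})$ continuous, so some care is needed to see that each $T(\phi_{\lambda})$ is genuinely an embedding and that its image is locally closed --- this is exactly where one uses how the tangent spaces of adjacent strata fit together and the local closedness of $\phi_{\lambda}(U_{\lambda})$. By contrast, the smoothness of the transition maps and the pseudobundle axioms are essentially formal once one observes that the tangent map of a smooth map of Euclidean domains is again smooth and that the fibrewise vector-space operations are restrictions of linear maps.
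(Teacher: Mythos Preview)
Your proposal is correct and follows essentially the same route as the paper: take the tangent charts $\{T(\phi_{\lambda})\}$ as a subcartesian atlas, verify their compatibility by lifting the ambient extensions of the base transition maps to their differentials, and then check the pseudobundle axioms by observing that $\pi$, the fibrewise addition, and the scalar multiplication become restrictions of linear (hence smooth) Euclidean maps in these charts, with $TU_{\lambda}=\pi^{-1}(U_{\lambda})$ and the commuting square over $\phi_{\lambda}$ giving the required family morphism. The paper is terser, deferring the compatibility verification to \cite[Theorem 2.1.2]{Pf01}, whereas you spell it out; your formula $T(\phi_{\mu})\circ T(\phi_{\lambda})^{-1}=(F_{\mu\lambda},d(F_{\mu\lambda}))$ is exactly the content of that reference.

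One small point: you flag the local closedness of $\mathrm{im}\,T(\phi_{\lambda})$ as the ``main obstacle,'' but for the statement at hand this is not needed. The theorem only asserts a \emph{subcartesian} structure on $TX$ in the sense of Definition~\ref{ssp}, whose charts are merely local homeomorphisms into $\mathbb{R}^{N}$ with smoothly extendable transitions; no local closedness of the image is required (that condition enters only for the $\C^{\infty}$-charts of Definition~\ref{sig-chart}). In fact, as the paper remarks just after the theorem, making $TX$ itself a decomposed space with the expected properties generally needs Whitney's Condition~(A), so your local-closedness claim may fail in full generality --- but since it is not part of what has to be proved, you can simply drop that step.
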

\begin{proof}
From definition, the collection
$$
\mathfrak{A}_{TX}=\bigl\{T\phi_{\lambda}:TU_{\lambda}
\rightarrow
\mathbb{R}^{2n_{\lambda}}\,|\,\lambda\in\Lambda\bigr\}
$$
comprises a set of $\C^{\infty}$-charts for $TX$.
It remains to verify the elements in $\mathfrak{A}_{TX}$ are compatible with each others.
The proof is the same as that in the proof of \cite[Theorem 2.1.2]{Pf01}.
It turns out that $TX$ becomes a $\C^{\infty}$-subcartesian space with the $\C^{\infty}$-atlas $\mathfrak{A}_{TX}$ and $\pi:TX\rightarrow X$ becomes a smooth surjection.
For each $x\in X$, by definition, there holds $\pi^{-1}(x)=T_{x}S$, where $S$ is the stratum containing the point $x$.
A direct checking shows that the vector operators:
$$
+:TX\times_{X}TX
\longrightarrow TX\,\,\,\,
\mathrm{and}\,\,\,\,
\cdot:\mathbb{R}\times TX\longrightarrow TX
$$
are smooth maps and therefore $(TX,\pi,X)$ is a $\C^{\infty}$-family of $\mathbb{R}$-vector spaces.
On the one hand, for every $\lambda\in\Lambda$ we have
$$
TU_{\lambda}=\pi^{-1}(U_{\lambda})=\pi^{-1}(\pi(TU_{\lambda}))
$$
since $\pi(TU_{\lambda})=U_{\lambda}$.
On the other hand, we get a commutative diagram of smooth maps:
\begin{equation}\label{fml-mp}
\vcenter{
\xymatrix@=1.3cm{
  TU_{\lambda} \ar[d]_{\pi} \ar[r]^{T\phi_{\lambda}} & \mathbb{R}^{2n_{\lambda}} \ar[d]^{\Pi} \\
  U_{\lambda} \ar[r]^{\phi_{\lambda}} & \mathbb{R}^{n_{\lambda}},   }
  }
\end{equation}
where $\Pi$ is the projection via the first $n_{\lambda}$ coordinates.
Note that $T\phi_{\lambda}$ is $\mathbb{R}$-linear along the fibers.
It follows from the definition that the diagram \eqref{fml-mp} is a morphism of $\C^{\infty}$-families of $\mathbb{R}$-vector spaces.
Thus, by Definition \ref{v-p-bundle}, we are led to the conclusion that $(TX,\pi,X)$ is a $\C^{\infty}$-vector pseudobundle.
\end{proof}

By a \emph{smooth vector field} on $X$ we mean a smooth map $V:X\rightarrow TX$ with $\pi\circ V=\mathrm{id}_{X}$.
We denote by $\mathrm{Vect}_{\mathrm{str}}(X)$
the set of smooth vector fields on $X$.
For any $f\in\C^{\infty}(X)$ and $V\in\mathrm{Vect}_{\mathrm{str}}(X)$, we can define a new function $V(f)$ by requiring
$$
\bigl(V(f)\bigr)|_{S}=V|_{S}(f|_{S}),
$$
for every stratum $S\in\MC_{X}$.
Likewise, given two smooth vector fields $V$ and $W$ on $X$, we can
define $[V,W]_{\mathrm{str}}$ by setting
$$
\bigl([V,W]_{\mathrm{str}}\bigr)|_{S}
=[V|_{S},W|_{S}].
$$

\begin{lem}\label{str-lie}
With the same situation as above, we have
\begin{itemize}
  \item [(i)] $V(f)$ is a smooth function on $X$;
  \item [(ii)] The bracket $[-,-]_{\mathrm{str}}$ is a Lie bracket on $\mathrm{Vect}_{\mathrm{str}}(X)$.
\end{itemize}
\end{lem}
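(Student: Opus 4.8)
The plan is to reduce both assertions to standard facts about ordinary vector fields and functions on the strata, the only genuine work being to check that the stratum-wise constructions $V(f)$ and $[V,W]_{\mathrm{str}}$ are compatible with the $\C^{\infty}$-structures on $X$, respectively on $TX$. The purely algebraic identities will come for free: $\mathbb{R}$-bilinearity, antisymmetry and the Jacobi identity for $[-,-]_{\mathrm{str}}$ (and the Leibniz rule for $V(\cdot)$) hold because they hold for the classical operations on each stratum and $X=\coprod_{S\in\MC_{X}}S$; e.g. the cyclic sum $[[V_{1},V_{2}]_{\mathrm{str}},V_{3}]_{\mathrm{str}}+\text{cyclic}$ restricts to $[[V_{1}|_{S},V_{2}|_{S}],V_{3}|_{S}]+\text{cyclic}=0$ on every $S$, hence vanishes on $X$.

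For (i) I would fix a $\C^{\infty}$-chart $\phi_{U}\colon U\to\mathbb{R}^{m}$ and write $f|_{U}=F\circ\phi_{U}$ with $F\in\C^{\infty}(\mathbb{R}^{m})$. Since $V\colon X\to TX$ is smooth with $\pi\circ V=\mathrm{id}_{X}$, the commuting square \eqref{fml-mp} forces $T\phi_{U}\circ V=(\phi_{U},v)$ with $v\colon U\to\mathbb{R}^{m}$ having components in $\C^{\infty}_{X}(U)$, so $v=\tilde v\circ\phi_{U}$ for some smooth $\tilde v$ on $\mathbb{R}^{m}$. For each stratum $S$ meeting $U$ the chart restricts to an embedding $\phi_{U}|_{S}\colon S\cap U\hookrightarrow\mathbb{R}^{m}$ whose differential sends $V|_{S}(x)$ to $v(x)$ (this is exactly how $T\phi_{U}$ was defined), and the chain rule along $S$ yields
\begin{equation*}
\bigl(V(f)\bigr)\big|_{S\cap U}(x)=V|_{S}(f|_{S})(x)=\sum_{i=1}^{m}\partial_{i}F(\phi_{U}(x))\,\tilde v_{i}(\phi_{U}(x)),
\end{equation*}
which is independent of $S$ and equals $G\circ\phi_{U}$ with $G:=\sum_{i}(\partial_{i}F)\,\tilde v_{i}\in\C^{\infty}(\mathbb{R}^{m})$. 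Hence $V(f)|_{U}$ is continuous and of the form $G\circ\phi_{U}$; since such charts cover $X$, this shows $V(f)\in\C^{\infty}_{X}(X)$.

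For (ii) it then remains only to see that $[V,W]_{\mathrm{str}}\in\mathrm{Vect}_{\mathrm{str}}(X)$; the identity $\pi\circ[V,W]_{\mathrm{str}}=\mathrm{id}_{X}$ is immediate, so I must verify smoothness of $[V,W]_{\mathrm{str}}\colon X\to TX$ with respect to the atlas $\mathfrak{A}_{TX}$ of Theorem~\ref{pseu-v-b}. Passing again to a chart and representing $V,W$ by smooth $\tilde v,\tilde w$ on $\mathbb{R}^{m}$ as above, the key input is the classical fact that two vector fields on an open subset of $\mathbb{R}^{m}$ tangent to the submanifold $\phi_{U}(S\cap U)$ have Lie bracket again tangent to it and restricting there to the intrinsic bracket; consequently the second block of $T\phi_{U}\circ[V,W]_{\mathrm{str}}$ at $x$ equals $u(\phi_{U}(x))$ with $u_{i}:=\sum_{j}(\tilde v_{j}\partial_{j}\tilde w_{i}-\tilde w_{j}\partial_{j}\tilde v_{i})\in\C^{\infty}(\mathbb{R}^{m})$, so $T\phi_{U}\circ[V,W]_{\mathrm{str}}=(\phi_{U},u\circ\phi_{U})$ is smooth. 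Thus $[V,W]_{\mathrm{str}}$ is a smooth section, and the Lie-algebra axioms hold stratum-wise as noted. The main obstacle --- the only non-formal point --- is precisely this gluing: a priori $V(f)$ and $[V,W]_{\mathrm{str}}$ are merely prescribed stratum by stratum, and one must show the resulting objects are continuous and, in every chart, represented by honest smooth functions on the ambient $\mathbb{R}^{m}$; both cases are handled by the same device of expressing the stratum-wise formula through a chart and using that the embeddings $\phi_{U}|_{S}$ fit together compatibly, together with the tangency of $V|_{S},W|_{S}$ to $\phi_{U}(S\cap U)$, so that a single ambient computation recovers the intrinsic operation on all strata simultaneously.
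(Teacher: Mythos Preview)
Your argument is correct and is precisely the local-chart computation one expects: the paper does not give its own proof but simply cites \cite[\S\,2.2.5, \S\,2.2.7]{Pf01}, where exactly this approach (express $V$, $W$, $f$ through a $\C^{\infty}$-chart by ambient smooth data on $\mathbb{R}^{m}$ and observe that the stratum-wise operations are all computed by a single ambient formula) is carried out. In particular your handling of the only substantive point --- that the stratum-wise definitions glue to objects smooth in the subcartesian sense --- matches the cited reference.
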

\begin{proof}
For a proof, see the arguments in \cite[\S\,2.2.5]{Pf01} and \cite[\S\,2.2.7]{Pf01}.
\end{proof}

We call $[-,-]_{\mathrm{str}}$ the \emph{stratified Lie bracket} on $X$.
A theorem of Cushman--\'{S}niatycki \cite[Theorem 3.3]{CS20} says that on a smooth subcartesian space derivations in the usual sense are $\C^{\infty}$-derivations, see Appendix \ref{alg-pre}.
Note that every smooth stratified pseudomanifold is a smooth subcartesian space in the sense of Definition \ref{ssp}.
This means every smooth vector field on a smooth stratified pseudomanifold is a $\C^{\infty}$-derivation.
It is noteworthy that $\mathrm{Vect}_{\mathrm{str}}(X)$ does not coincide with the space of $\C^{\infty}$-derivations in general.
However, we always have the following relation as Lie algebras:
$$
\mathrm{Vect}_{\mathrm{str}}(X)\subset
\mathrm{Der}_{\C^{\infty}(X)}(\C^{\infty}(X),\C^{\infty}(X))
\cong\mathcal{T}_{X}(X).
$$

Comparing to the tangent bundle of a smooth manifold,
one may wonder whether $TX$ is a smooth decomposed space and $\pi$ is a topological projection, i.e., $X$ carries the quotient topology of $TX$.
In general, the answer is negative unless $X$ satisfies the Whitney's Condition (A) (cf.  \cite[Theorem 2.1.2]{Pf01}).

\begin{rem}
Likewise, as a set, one can define the \emph{cotangent bundle} to be
\begin{equation}\label{c-t-x}
T^{\ast}X=\coprod_{S\in\MC_{X}}T^{\ast}S
\end{equation}
with a canonical projection
$\pi:T^{\ast}X\rightarrow X$
such that, for each stratum $S\in\MC_{X}$, the restriction
$$
\pi|_{T^{\ast}S}:T^{\ast}S\longrightarrow S
$$
is the projection of the cotangent bundle of $S$.
In contrast to the tangent pseudobundle,
even if the Whitney's Condition (A) is satisfied,
the smooth structure on $(X,\mathcal{S}_{X})$ can only induces a canonical topology on $T^{\ast}X$ such that $\pi$ is \emph{continuous}, and $T^{\ast}X$ becomes a \emph{decomposed space} with respect to the decomposition
\eqref{c-t-x}, see \cite[\S\,2.3.3]{Pf01} for a detailed construction.
In fact, $T^{\ast}X$ is a \emph{$\C^{0}$-family of $\mathbb{R}$-vector spaces} over $\mathcal{X}$ in the sense of Definition \ref{v-p-bundle}.
\end{rem}
\subsection{$\mathcal{C}^{\infty}$-algebraic de Rham cohomology}\label{app-a2}

This subsection aims to a cohomological study of smooth stratified pseudomanifolds.
We keep the notational conventions of the subsection above.
Let $\mathcal{X}=(X,\mathcal{S}_{X},\C^{\infty}_{X})$
be a smooth stratified pseudomanifold.
In contrast to smooth manifolds, the object $T^{\ast}X$ does not admit any smooth structure in a obvious way.
This implies that $T^{\ast}X$ and its exterior products $\wedge^{k}T^{\ast}X$ are not fit to define \lq\lq differential forms" on $X$.
However, from a $\mathcal{C}^{\infty}$-algebro-geometric point of view, as a local $\mathcal{C}^{\infty}$-ringed space $(X,\C^{\infty}_{X})$, the cotangent sheaf and the sheaves of $\mathcal{C}^{\infty}$-K\"{a}hler differential forms contain the most fundamental geometric information of $X$.

According to the de Rham cohomology theory of local $\mathcal{C}^{\infty}$-ringed spaces in Appendix \ref{alg-pre},
we get a sheaf complex over $X$:
\begin{equation}\label{fine-de-com-X}
\xymatrix{
  0 \ar[r] & \C^{\infty}_{X} \ar[r]^{\mathrm{d}} & \Omega^{1}_{X} \ar[r]^{\mathrm{d}} & \Omega^{2}_{X} \ar[r]^{\mathrm{d}} & \cdots \ar[r]^{\mathrm{d}} & \Omega^{k}_{X} \ar[r] & \cdots. }
\end{equation}
Note that the structure sheaf $\C^{\infty}_{X}$ is \emph{fine} (Theorem \ref{fine}).
The sheaf of $\mathcal{C}^{\infty}$-K\"{a}hler differential $p$-forms $\Omega^{p}_{X}$ is also fine since it is a sheaf of $\C^{\infty}_{X}$-modules.
As a result, applying the global section functor to \eqref{fine-de-com-X} gives rise to the \emph{de Rham complex of $\mathcal{C}^{\infty}$-K\"{a}hler differential forms}:
\begin{equation}\label{dR-com-X}
\xymatrix{
  0 \ar[r] & \C^{\infty}(X) \ar[r]^{\mathrm{d}} & \Omega^{1}(X) \ar[r]^{\mathrm{d}} & \Omega^{2}(X) \ar[r]^{\mathrm{d}} & \cdots \ar[r]^{\mathrm{d}} & \Omega^{k}(X) \ar[r] & \cdots. }
\end{equation}
Denote the cohomology of \eqref{dR-com-X} by
$\mathrm{H}^{\ast}_{\DR}(X)$.
Recall the definition of $\mathcal{C}^{\infty}$-algebraic de Rham cohomology of local $\mathcal{C}^{\infty}$-ringed spaces in Appendix \ref{alg-pre}.
Thanks to the \emph{fineness} of the terms in
\eqref{fine-de-com-X}, the hypercohomology of \eqref{fine-de-com-X} is isomorphic to the cohomology of the complex \eqref{dR-com-X} and therefore it follows
$$
\mathbb{H}^{\ast}(X,\Omega^{\bullet}_{X})\cong
\mathrm{H}^{\ast}(\Omega^{\bullet}(X),\mathrm{d})=\mathrm{H}^{\ast}_{\DR}(X).
$$
It is worth pointing out that the sheaf complex \eqref{fine-de-com-X} is not exact and hence the de Rham theorem does holds on $X$, that is the \v{C}ech cohomology of the constant sheaf $\check{\mathrm{H}}^{\ast}(X, \underline{\mathbb{R}})$ is not isomorphic to the $\mathcal{C}^{\infty}$-algebraic de Rham cohomology $\mathrm{H}^{\ast}_{\DR}(X)$.
Let $\underline{f}=(f,f_{\sharp})$ be a smooth map from $\mathcal{X}$ to another smooth stratified pseudomanifold $\mathcal{Y}=(Y,\mathcal{S}_{Y},\C^{\infty}_{Y})$ in the sense of Definition \ref{str-sm-mp}.
According to \cite[Theorem 8.13]{Ler22}, the map $\underline{f}$ induces a morphism of the complexes
$$
\underline{f}^{\ast}:(\Omega^{\bullet}(Y),\mathrm{d})\longrightarrow (\Omega^{\bullet}(X),\mathrm{d})
$$
and hence a morphism of cohomology groups
\begin{equation}\label{com-map-3}
\underline{f}^{\ast}:\mathrm{H}^{\ast}_{\DR}(Y).
\longrightarrow
\mathrm{H}^{\ast}_{\DR}(X)
\end{equation}
This implies that the $\mathcal{C}^{\infty}$-algebraic de Rham cohomology is a \emph{contravariant functor} from the category of smooth stratified pseudomanifolds to the category of abelian groups.

Compare to the differential forms on smooth manifolds.
We have the following local result.

\begin{lem}\label{local-finite}
Let $\mathcal{X}=(X,\mathcal{S}_{X},\C^{\infty}_{X})$
be a smooth stratified pseudomanifold.
The cotangent sheaf $\Omega^{1}_{X}$ is locally finitely generated.
\end{lem}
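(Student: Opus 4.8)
The plan is to work locally and reduce the statement to the behaviour of $\mathcal{C}^{\infty}$-Kähler differentials on a closed subset of Euclidean space. Fix a point $x\in X$ and a $\C^{\infty}$-chart $\phi_{U}:U\to\mathrm{im}(\phi_{U})\subset\mathbb{R}^{m}$ around $x$, so that $\mathrm{im}(\phi_{U})$ is a locally closed subset of $\mathbb{R}^{m}$ and, after shrinking $U$, we may take the image to be closed in an open subset $W\subset\mathbb{R}^{m}$. By Definition \ref{str-sm-mp} the chart exhibits $(U,\C^{\infty}_{X}|_{U})$ as the $\C^{\infty}$-ringed subspace of $(\mathbb{R}^{m},\C^{\infty}_{\mathbb{R}^{m}})$ supported on $\mathrm{im}(\phi_{U})$, i.e.\ $\C^{\infty}_{X}(U)\cong\C^{\infty}(W)/I$ where $I$ is the (germ-wise) vanishing ideal of $\mathrm{im}(\phi_{U})$. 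This is the key structural input: a $\C^{\infty}$-chart presents a neighbourhood as a $\C^{\infty}$-ringed quotient of a manifold chart.

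The next step is the standard conormal exact sequence for $\mathcal{C}^{\infty}$-Kähler differentials of a quotient $\C^{\infty}$-ring. For the surjection $\C^{\infty}(W)\twoheadrightarrow\C^{\infty}(W)/I$ one has an exact sequence of $\C^{\infty}(W)/I$-modules
\begin{equation*}
I/I^{2}\longrightarrow \Omega^{1}_{\C^{\infty}(W)}\otimes_{\C^{\infty}(W)}\bigl(\C^{\infty}(W)/I\bigr)\longrightarrow \Omega^{1}_{\C^{\infty}(W)/I}\longrightarrow 0,
\end{equation*}
and the middle term is $\bigoplus_{j=1}^{m}\C^{\infty}_{X}(U)\,\mathrm{d}x_{j}$, the free module on the coordinate differentials pulled back along $\phi_{U}$. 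Sheafifying over $U$, this shows that $\Omega^{1}_{X}|_{U}$ is the cokernel of a map into the free $\C^{\infty}_{X}|_{U}$-module of rank $m$ on $\mathrm{d}(\phi_{U}^{1}),\dots,\mathrm{d}(\phi_{U}^{m})$, hence is generated by these $m$ sections over $U$. Since such charts cover $X$ (as noted after Definition \ref{s-s-pseodomfd}, a $\C^{\infty}$-structure is built from an open cover by compatible $\C^{\infty}$-charts), $\Omega^{1}_{X}$ is locally finitely generated, with the local number of generators bounded by the embedding dimension $m$ of the chart.

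I expect the main technical point to be the justification that the middle term of the conormal sequence is exactly the free module on the $m$ pulled-back coordinate differentials — equivalently, that $\Omega^{1}_{\C^{\infty}(W)}$ is free of rank $m$ with basis $\mathrm{d}x_{1},\dots,\mathrm{d}x_{m}$, and that base change along the quotient map behaves as in ordinary commutative algebra. This is a statement about $\C^{\infty}$-rings rather than ordinary rings, so one must invoke the corresponding facts from the $\C^{\infty}$-algebraic geometry recalled in Appendix \ref{alg-pre} (the Hadamard-lemma computation of $\Omega^{1}$ of a free $\C^{\infty}$-ring, and right-exactness of $\Omega^{1}$ together with its compatibility with base change, e.g.\ as in \cite{Jo19}). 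Once that is in hand the argument is purely formal; no compatibility check between overlapping charts is needed, since "locally finitely generated" is a local condition and a single chart around each point suffices.
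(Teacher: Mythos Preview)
Your proof is correct and follows essentially the same approach as the paper: both use a $\C^{\infty}$-chart to identify $\C^{\infty}_{X}(U)$ with a quotient $\C^{\infty}(W)/I$ of a free $\C^{\infty}$-ring, and then conclude that its cotangent module is finitely generated. The only difference is cosmetic: the paper invokes \cite[Proposition~5.6]{Jo19} as a black box, whereas you unpack that proposition via the conormal exact sequence and exhibit the explicit generators $\mathrm{d}(\phi_{U}^{1}),\dots,\mathrm{d}(\phi_{U}^{m})$.
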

\begin{proof}
For every $x$ in $X$, let $\phi:U\rightarrow O\subset\mathbb{R}^{n}$ be a $\C^{\infty}$-chart around $x$.
Then the $\mathbb{R}$-algebras $\C^{\infty}(U)$ and $\C^{\infty}(O)/\mathcal{I}$ are identical,
where $\mathcal{I}$ is the ideal of smooth functions vanishing on the image $\phi(U)$.
This implies that $\C^{\infty}(U)$ is a finitely generated $\C^{\infty}$-ring in the sense of Definition \ref{f-g-c-r}.
Then it follows from \cite[Proposition 5.6]{Jo19} that the cotangent module of $\C^{\infty}(U)$ is a finitely generated $\C^{\infty}(U)$-module and therefore the cotangent sheaf $\Omega^{1}_{X}$ is locally finitely generated as a $\C^{\infty}_{X}$-module.
\end{proof}

From Lemma \ref{local-finite}, we get that the stalk $\Omega^{1}_{X,x}$ is a finitely generated module over the local ring $\C^{\infty}_{X,x}$.
It is important to point out that we can not identify $\Omega^{1}_{X}$ with the sheaf of sections of some vector bundle over $X$ unless $X$ is a smooth manifold.
The reason lies in the fact that, in general, the sheaf $\Omega^{1}_{X}$ is \emph{not} locally free in the presence of singular strata.

To conclude this subsection, we prove the following two basic propositions that we will use in the sequel.

\begin{prop}\label{X-S}
Let $\mathcal{X}=(X,\mathcal{S}_{X},\C^{\infty}_{X})$
be a smooth stratified pseudomanifold and $S$ a stratum of $X$.
Then each closed $\C^{\infty}$-K\"{a}hler differential $p$-form $\alpha\in\Omega^{p}(X)$ induces a closed differential $p$-form on $S$ in the usual sense.
\end{prop}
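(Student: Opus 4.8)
The plan is to construct the induced form by restriction at the level of local charts and then check that this is well-defined, $\C^{\infty}$-smooth on $S$, and compatible with $\mathrm{d}$. Fix a stratum $S\in\MC_X$ and a point $x\in S$. Choose a $\C^{\infty}$-chart $\phi:U\to\mathrm{im}(\phi)\subset\mathbb{R}^m$ around $x$; by Definition \ref{sig-chart}(ii) the restriction $\phi|_{U\cap S}:U\cap S\to\phi(U\cap S)$ is a diffeomorphism onto a submanifold of $\mathbb{R}^m$. The key point is that a $\C^{\infty}$-K\"{a}hler differential $p$-form $\alpha\in\Omega^p(X)$ is locally (after restricting to $U$) representable as a finite sum $\alpha|_U=\sum_I g_I\,\mathrm{d}f_{i_1}\wedge\cdots\wedge\mathrm{d}f_{i_p}$ with $g_I,f_{i_k}\in\C^{\infty}(U)$: this follows from Lemma \ref{local-finite}, which says $\Omega^1_X$ is locally finitely generated over $\C^{\infty}_X$, together with the fact that $\C^{\infty}(U)=\C^{\infty}(O)/\mathcal{I}$ is a finitely generated $\C^{\infty}$-ring whose K\"{a}hler differentials are generated by the $\mathrm{d}(\text{coordinate functions})$. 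First I would define $\alpha|_S$ locally on $U\cap S$ by $\alpha|_{U\cap S}:=\sum_I (g_I|_{U\cap S})\,\mathrm{d}(f_{i_1}|_{U\cap S})\wedge\cdots\wedge\mathrm{d}(f_{i_p}|_{U\cap S})$, i.e.\ by applying the pullback along the inclusion $\iota_S:S\hookrightarrow X$, which is a smooth map of stratified pseudomanifolds and hence induces $\iota_S^{\ast}:\Omega^{\bullet}(X)\to\Omega^{\bullet}(S)$ by \cite[Theorem 8.13]{Ler22} as already invoked for \eqref{com-map-3}.

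The substantive step is to verify that the object so obtained is an ordinary smooth differential $p$-form on the manifold $S$. Since $\phi|_{U\cap S}$ is a diffeomorphism onto an open submanifold of $\mathbb{R}^m$ (in the subspace sense, i.e.\ onto $\phi(U\cap S)$, itself a submanifold), each $g_I|_{U\cap S}$ is, by definition of the structure sheaf $\C^{\infty}_X$, of the form $G_I\circ\phi|_{U\cap S}$ for a genuine smooth function $G_I$ on $\mathbb{R}^m$, and likewise each $f_{i_k}|_{U\cap S}$ extends to a smooth function on $\mathbb{R}^m$; thus $\alpha|_{U\cap S}$ is the pullback under the smooth embedding $\phi|_{U\cap S}$ of an ordinary smooth $p$-form on (a neighbourhood in) $\mathbb{R}^m$, hence is an ordinary smooth $p$-form on $U\cap S$. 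Independence of the local representation of $\alpha$ is automatic because $\iota_S^{\ast}$ is a well-defined morphism of complexes; independence of the chart follows by the same reasoning applied on overlaps $U\cap U'$, using that transition maps are diffeomorphisms on each stratum. Gluing the locally defined forms $\alpha|_{U\cap S}$ over an atlas therefore produces a globally well-defined smooth $p$-form on $S$, which we denote $\alpha|_S$.

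Finally, closedness is immediate: $\iota_S^{\ast}$ commutes with the differential, so $\mathrm{d}(\alpha|_S)=\mathrm{d}(\iota_S^{\ast}\alpha)=\iota_S^{\ast}(\mathrm{d}\alpha)=\iota_S^{\ast}(0)=0$ whenever $\mathrm{d}\alpha=0$ in $\Omega^{p+1}(X)$; and one checks directly that the $\C^{\infty}$-algebraic differential $\mathrm{d}$ on $\Omega^{\bullet}(S)$ agrees with the usual exterior derivative on the smooth manifold $S$ (both are characterised by the Leibniz rule and their action on $0$-forms). The main obstacle is the second paragraph: one must argue carefully that the pullback of a $\C^{\infty}$-K\"{a}hler differential form along the stratum inclusion lands in the \emph{ordinary} de Rham complex of $S$ rather than in some a priori larger module of $\C^{\infty}$-K\"{a}hler differentials of the manifold $S$; this rests on the standard fact (see, e.g., \cite[Example 6.3]{Jo19}) that for a smooth manifold the $\C^{\infty}$-K\"{a}hler cotangent module coincides with the usual module of $1$-forms, together with the local-finite-generation input of Lemma \ref{local-finite} to reduce every form to a finite polynomial-in-$\mathrm{d}f$ expression.
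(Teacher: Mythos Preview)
Your proposal is correct and follows essentially the same approach as the paper: both use that the inclusion $S\hookrightarrow X$ is a morphism of local $\C^{\infty}$-ringed spaces, invoke the induced pullback $\underline{i}^{\ast}$ on $\C^{\infty}$-K\"{a}hler forms, appeal to the fact that on a smooth manifold the $\C^{\infty}$-K\"{a}hler complex coincides with the ordinary de Rham complex, and deduce closedness from $\mathrm{d}\circ\underline{i}^{\ast}=\underline{i}^{\ast}\circ\mathrm{d}$. The paper's argument is simply more economical---it omits your explicit local-chart representation entirely and works purely at the level of $\C^{\infty}$-ringed spaces, so your second paragraph, while correct, is redundant once the identification of $\Omega^{\bullet}(S)$ with the usual de Rham complex (your final paragraph) is in hand.
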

\begin{proof}
Note that the stratum $S$ is a smooth manifold.
Consider $S$ as a local $\C^{\infty}$-ringed space $(S,\C^{\infty}_{S})$.
The inclusion $i:S\subset X$ induces a natural morphism of local $\C^{\infty}$-ringed spaces
$$
\underline{i}=(i,i_{\sharp}):(S,\C^{\infty}_{S})\longrightarrow
(X,\C^{\infty}_{X})
$$
and therefore a morphism of $\C^{\infty}$-K\"{a}hler differential $p$-forms
$$
\underline{i}^{\ast}:\Omega^{p}(X)
\longrightarrow
\Omega^{p}(S).
$$
Since $S$ is a smooth manifold, the space of $\C^{\infty}$-K\"{a}hler differential forms together with the operator $\mathrm{d}$ on $S$ coincide with the space of differential forms and the exterior differential operator in the usual sense.
It follows that the pullback $\underline{i}^{\ast}\alpha$ is a  differential $p$-form on $S$ in the usual sense.
The fact that $\mathrm{d}$ commutates with the map $\underline{i}^{\ast}$ implies that $\underline{i}^{\ast}\alpha$ is closed and this completes the proof.
\end{proof}

\begin{prop}\label{M-X}
Let $M$ be a smooth manifold and $X$ a closed subset in $M$.
Suppose $(X,\mathcal{S}_{X})$ is a stratified pseudomanifold with a smooth structure $\C^{\infty}_{X}$
inherited from $M$.
Then for each closed differential form $\alpha\in\Omega^{p}(M)$ the pullback $\underline{i}^{\ast}\alpha$ is a closed $\C^{\infty}$-K\"{a}hler differential $p$-form on $X$, where $\underline{i}=(i,i_{\sharp}):(X,\C^{\infty}_{X})\hookrightarrow (M,\C^{\infty}_{M})$ is the inclusion.
\end{prop}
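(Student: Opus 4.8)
The plan is to run the argument of Proposition~\ref{X-S} in the opposite direction: there one restricts a global $\C^{\infty}$-K\"{a}hler form from $X$ down to a single smooth stratum, whereas here one pulls a classical closed form on the ambient manifold $M$ back along the inclusion of the stratified pseudomanifold $X$. The starting point is that, because $\C^{\infty}_{X}$ is the smooth structure inherited from $M$ in the sense of Example~\ref{c-a-v} (its charts are restrictions $\phi_{\lambda}=\Phi_{\lambda}|_{X\cap U_{\lambda}}$ of charts $\Phi_{\lambda}$ of $M$, equivalently it is the smallest smooth structure making $i$ smooth), the inclusion $i:X\hookrightarrow M$ is a smooth map of smooth stratified pseudomanifolds as in Definition~\ref{str-sm-mp}. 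It is stratum-preserving trivially, since $M$ carries the trivial stratification; and for $g\in\C^{\infty}_{M}(V)$ and any such chart, writing $g|_{U_{\lambda}}=G\circ\Phi_{\lambda}$ with $G$ smooth yields $(g\circ i)|_{X\cap U_{\lambda}}=G\circ\phi_{\lambda}$, so $g\circ i\in\C^{\infty}_{X}(i^{-1}(V))$. Hence $i$ determines a morphism of local $\C^{\infty}$-ringed spaces $\underline{i}=(i,i_{\sharp}):(X,\C^{\infty}_{X})\rightarrow(M,\C^{\infty}_{M})$.

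Next I would invoke the functoriality of $\C^{\infty}$-K\"{a}hler differential forms, exactly as in the construction of \eqref{com-map-3}: by \cite[Theorem~8.13]{Ler22}, $\underline{i}$ induces pullback maps $\underline{i}^{\ast}:\Omega^{p}(M)\rightarrow\Omega^{p}(X)$ that commute with the de Rham differential, $\underline{i}^{\ast}\circ\mathrm{d}=\mathrm{d}\circ\underline{i}^{\ast}$. Since $M$ is an ordinary smooth manifold, $\Omega^{p}(M)$ together with $\mathrm{d}$ agrees with the usual space of differential $p$-forms and the ordinary exterior derivative (the same identification used in Proposition~\ref{X-S}), so the hypothesis $\mathrm{d}\alpha=0$ says precisely that $\alpha$ is a closed $\C^{\infty}$-K\"{a}hler differential $p$-form on $M$. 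Consequently $\underline{i}^{\ast}\alpha\in\Omega^{p}(X)$ is a $\C^{\infty}$-K\"{a}hler differential $p$-form on $X$ with $\mathrm{d}(\underline{i}^{\ast}\alpha)=\underline{i}^{\ast}(\mathrm{d}\alpha)=0$, which is the asserted statement.

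I expect no serious obstacle: once the $\C^{\infty}$-algebro-geometric framework of Appendix~\ref{alg-pre} is in place, the statement is formal, being nothing more than the naturality of $\mathrm{d}$. The one step I would spell out with care is the verification in the first paragraph that the inherited structure genuinely makes $i$ a morphism in the category of local $\C^{\infty}$-ringed spaces --- i.e.\ that $\C^{\infty}_{X}$ contains every restriction $g\circ i$ of a smooth function on $M$ --- since it is this that licenses the application of \cite{Ler22}; everything else then follows immediately.
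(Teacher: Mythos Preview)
Your proposal is correct and follows essentially the same approach as the paper: the paper likewise observes that the inherited smooth structure makes $\underline{i}$ a morphism of local $\C^{\infty}$-ringed spaces, that on the smooth manifold $M$ the $\C^{\infty}$-K\"{a}hler differential complex coincides with the ordinary de Rham complex, and then defers to the argument of Proposition~\ref{X-S} for the naturality of $\mathrm{d}$ under pullback. Your additional verification that $g\circ i\in\C^{\infty}_{X}(i^{-1}(V))$ via restricted charts is a welcome elaboration of a step the paper leaves implicit.
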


The proof of Proposition \ref{M-X} is similar to Proposition \ref{X-S}.
On the one hand, as a local $\C^{\infty}$-ringed space, the sheaf complex of $\C^{\infty}$-K\"{a}hler differential forms on $M$ is equal to the sheaf complex of ordinary differential forms.
On the other hand, since the smooth structure of $X$ is inherited from $(M,\C^{\infty}_{M})$ there exists a natural morphism of local $\C^{\infty}$-ringed spaces
$$
\underline{i}=(i,i_{\sharp}):(X,\C^{\infty}_{X})
\longrightarrow
(M,\C^{\infty}_{M}).
$$
The rest of the proof is the same as Proposition \ref{X-S}.

\section{Symplectic and K\"{a}hler structures}\label{sym-kah}

\subsection{Symplectic stratified pseudomanifolds}

Let $\mathcal{X}=(X,\mathcal{S}_{X},\C^{\infty}_{X})$
be a $\C^{\infty}$-stratified pseudomanifold.
Recall the construction in Subsection \ref{app-a2}.
It has been shown that the sheaves of $\C^{\infty}$-K\"{a}hler differential forms on $X$ have many advantageous properties.
However, these sheaves do not behave well near singular points.
On the one hand, there exist non-zero $\C^{\infty}$-K\"{a}hler differential forms such that the restrictions to the regular stratum $X_{\mathrm{reg}}$ vanish.
On the other hand, in general $\Omega^{p}_{X}(X)\neq0$ when $p>\mathrm{dim}\,X$.
Inspired by \cite[Definition (1.1)]{Fer70} and \cite[Definition 2.3]{HKK17}, we are therefore led to introduce the following definition.
\begin{defn}\label{tor-ele}
Let $\mathcal{F}$ be a sheaf of $\C^{\infty}_{X}$-modules on $X$ and $U$ an arbitrary open subset of $X$.
We say that a section $\alpha\in\mathcal{F}(U)$ is a \emph{torsion element}, if the restriction $\alpha|_{U_{\mathrm{reg}}}$ is vanishing, where $U_{\mathrm{reg}}=U\cap X_{\mathrm{reg}}$.
If $\mathcal{F}_{\mathrm{t}}(U)=0$ for any open subset $U$ of $X$, we call $\mathcal{F}$ a \emph{torsion-free} sheaf,
where $\mathcal{F}_{\mathrm{t}}(U)$ is the $\C^{\infty}_{X}(U)$-submodule of torsion elements in $\mathcal{F}(U)$.
\end{defn}

\begin{rem}
Generally, it is noteworthy that the torsion element in the sense of Definition \ref{tor-ele} and the torsion element in the module theoretic sense are not identical.
\end{rem}

Set $\mathcal{F}_{\mathrm{tf}}(U)$ the quotient module of $\mathcal{F}_{\mathrm{t}}(U)$ in $\mathcal{F}(U)$.
This defines a presheaf on $X$ by assigning the $\C^{\infty}_{X}(U)$-module $\mathcal{F}_{\mathrm{tf}}(U)$ to an arbitrary open subset $U\subset X$.
The sheafification of this presheaf, denoted by $\mathcal{F}_{\mathrm{tf}}$, is called the \emph{torsion-free sheaf} corresponding to $\mathcal{F}$.
Note that the structure sheaf $\C^{\infty}_{X}$ itself is torsion-free and the cotangent sheaf $\Omega^{1}_{X}$ is locally finitely generated.
It is of importance to point out that $\Omega^{1}_{X}$ is \textbf{not} torsion-free in general.

Consider the tangent sheaf
$\mathcal{T}_{X}=\Hom_{\C_{X}^{\infty}}(\Omega^{1}_{X},\C_{X}^{\infty}).$
Since $\C^{\infty}_{X}(U)$ is not noetherian the sheaf $\mathcal{T}_{X}$ is not locally finitely generated necessarily.
\begin{prop}
The tangent sheaf $\mathcal{T}_{X}$ is torsion-free in the sense of Definition \ref{tor-ele}.
\end{prop}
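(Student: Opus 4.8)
The plan is to reduce everything to the single fact, recorded just above, that the structure sheaf $\C^{\infty}_{X}$ is torsion-free; the general principle at work is that a sheaf $\Hom$ whose target is torsion-free is itself torsion-free.

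First I would make the hypothesis explicit. Recall $\mathcal{T}_{X}=\Hom_{\C^{\infty}_{X}}(\Omega^{1}_{X},\C^{\infty}_{X})$ is the sheaf $\Hom$, so a section $D\in\mathcal{T}_{X}(U)$ over an open set $U\subseteq X$ is a morphism of sheaves of $\C^{\infty}_{X}|_{U}$-modules $D\colon\Omega^{1}_{X}|_{U}\to\C^{\infty}_{X}|_{U}$, and in particular it may be restricted to any open subset. Suppose $D$ is a torsion element in the sense of Definition \ref{tor-ele}, i.e.\ its restriction to $U_{\mathrm{reg}}=U\cap X_{\mathrm{reg}}$ is the zero morphism; equivalently $D_{V}=0$ for every open $V\subseteq U_{\mathrm{reg}}$. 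I want to deduce $D=0$, and for this it suffices to show $D_{V}(\alpha)=0$ in $\C^{\infty}_{X}(V)$ for every open $V\subseteq U$ and every $\alpha\in\Omega^{1}_{X}(V)$.

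The second step would be a one-line computation. Writing $V_{\mathrm{reg}}:=V\cap X_{\mathrm{reg}}$, which is contained in $U_{\mathrm{reg}}$, compatibility of $D$ with restriction gives
\[
D_{V}(\alpha)\big|_{V_{\mathrm{reg}}}=D_{V_{\mathrm{reg}}}\bigl(\alpha|_{V_{\mathrm{reg}}}\bigr)=0 ,
\]
since $D_{V_{\mathrm{reg}}}=0$. Thus $D_{V}(\alpha)$ is a section of $\C^{\infty}_{X}$ over $V$ whose restriction to $V_{\mathrm{reg}}$ vanishes, i.e.\ a torsion element of $\C^{\infty}_{X}$ in the sense of Definition \ref{tor-ele}. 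Because $\C^{\infty}_{X}$ is torsion-free (as noted above, which amounts to the density of the regular locus together with the continuity of smooth functions), we conclude $D_{V}(\alpha)=0$. Letting $V$ and $\alpha$ vary yields $D=0$, hence $(\mathcal{T}_{X})_{\mathrm{t}}(U)=0$ for every open $U$, which is the assertion.

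I do not expect a genuine obstacle here. The only point that needs a little care is the reading of $\mathcal{T}_{X}$ as an internal $\Hom$, so that its sections genuinely restrict to the regular locus and the displayed computation makes sense; the substance of the argument is carried entirely by the torsion-freeness of $\C^{\infty}_{X}$, and the remainder is bookkeeping of restriction maps.
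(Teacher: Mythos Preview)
Your proposal is correct and is essentially the same argument as the paper's: both reduce to the torsion-freeness of $\C^{\infty}_{X}$ via the commutative square expressing compatibility of a section of $\mathcal{T}_{X}$ with restriction to the regular locus. The only cosmetic difference is that you phrase it with the internal sheaf $\Hom$ (a section over $U$ is a morphism $\Omega^{1}_{X}|_{U}\to\C^{\infty}_{X}|_{U}$), whereas the paper works at the presheaf level with $\Hom_{\C^{\infty}(U)}(\Omega^{1}_{X}(U),\C^{\infty}(U))$; the underlying idea and the key input are identical.
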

\begin{proof}
Recall that the tangent sheaf $\mathcal{T}_{X}$ is the sheafification of the presheaf
\begin{equation*}
U\longmapsto
\Hom_{\C^{\infty}(U)}\bigl(\Omega^{1}_{X}(U),\C^{\infty}(U)\bigr),
\end{equation*}
where $U$ is an arbitrary open subset in $X$.
To show that $\mathcal{T}_{X}$ is torsion-free,
it is sufficient to prove that, as a $\C^{\infty}(U)$-module,
$$
\Hom_{\C^{\infty}(U)}\bigl(\Omega^{1}_{X}(U),\C^{\infty}(U)\bigr)
$$
is torsion-free.
For each element $s\in\Hom_{\C^{\infty}(U)}\bigl(\Omega^{1}_{X}(U),\C^{\infty}(U)\bigr)$, we get a commutative square:
\begin{equation}\label{tgt-tf}
\vcenter{
\xymatrix@=1.3cm{
  \Omega^{1}_{X}(U) \ar[d]_{\rho^{1}_{\mathrm{reg}}} \ar[r]^{s} & \C^{\infty}(U) \ar[d]^{\rho^{\infty}_{\mathrm{reg}}} \\
  \Omega^{1}_{X}(U_{\mathrm{reg}}) \ar[r]^{s_{\mathrm{reg}}} & \C^{\infty}(U_{\mathrm{reg}}).}
  }
\end{equation}
Here both $\rho^{1}_{\mathrm{reg}}$ and $\rho^{\infty}_{\mathrm{reg}}$ are restriction maps, and $s_{\mathrm{reg}}$ is the restriction of $s$ to $U_{\mathrm{reg}}$.
Note that $\rho^{\infty}_{\mathrm{reg}}$ is injective since $\C^{\infty}_{X}$ is torsion-free.
If $s_{\mathrm{reg}}=0$ then it follows from the commutativity of \eqref{tgt-tf} that $s$ equals zero, and this implies that $\mathcal{T}_{X}$ is torsion-free.
\end{proof}

For any section $v\in\mathcal{T}_{X}(U)$ and any $\C^{\infty}$-K\"{a}hler differential 1-form $\alpha\in\Omega_{X}^{1}(U)$,
we can insert $v$ in $\alpha$ to get a smooth function denoted by $\iota(v)\alpha\in\C^{\infty}(U)$.
In fact, we can generalize the classical contraction in differential geometry to $\C^{\infty}$-algebraic setting without essential changes.
Let $\beta$ be a $\C^{\infty}$-K\"{a}hler differential $p$-form on $U$.
For all $x\in U$, the germ of $\beta$ at $x$, denoted by $\beta_{x}$, lies in the $\C^{\infty}_{X,x}$-module
$\Omega^{p}_{X,x}=\wedge^{p}\Omega^{1}_{X,x}$ and therefore $\beta_{x}$ has a representation of the form
$$
\beta_{x}=
\alpha_{1}\wedge\alpha_{2}\wedge\cdots\wedge\alpha_{p}
$$
where $\alpha_{j}\in\Omega^{1}_{X,x}$ for any $1\leq j\leq p$.
The \emph{contraction}
$$
\iota(v):\Omega_{X}^{p}(U)\longrightarrow\Omega_{X}^{p-1}(U)
$$
is defined by requiring
$$
(\iota(v)\beta)_{x}=\sum_{j=1}^{p}(-1)^{j-1}
(\iota(v_{x})\alpha_{j})\cdot\alpha_{1}\wedge\cdots
\wedge\widehat{\alpha}_{j}\wedge\cdots\wedge\alpha_{p},
$$
where the hat indicates that $\alpha_{j}$ is omitted.
Having shown the contraction, it is natural to define the Lie derivative in the algebraic setting.
Recalling the classical Cartan formula in differential geometry, we can take the formula
\begin{equation}\label{li-der}
\mathcal{L}(v)=\iota(v)\circ\mathrm{d}+\mathrm{d}\circ\iota(v)
\end{equation}
as a definition of the \emph{Lie derivative} acting on $\C^{\infty}$-K\"{a}hler differential forms.
In particular, it satisfies the following:
\begin{prop}\label{cartan-fomla}
For any derivations $v\in\mathcal{T}_{X}(X)$ and $w\in\mathcal{T}_{X}(X)$, we have
\begin{itemize}
  \item [(i)] $[\mathcal{L}(v),\mathrm{d}]=0$;
  \item [(ii)] $[\mathcal{L}(v),\iota(w)]=\iota([v,w])$;
  \item [(iii)] $[\mathcal{L}(v),\mathcal{L}(w)]=\mathcal{L}([v,w])$.
\end{itemize}
\end{prop}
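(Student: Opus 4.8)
The plan is to reduce all three identities to the corresponding classical Cartan calculus on the strata, using the torsion-freeness of the relevant sheaves to promote stratum-wise equalities to global ones. The key structural observation is that $\C^{\infty}_X$ and $\mathcal{T}_X$ are torsion-free (the latter by the preceding proposition), whereas $\Omega^p_X$ need not be; however, since $\mathcal{L}(v)$ is \emph{defined} by the Cartan formula \eqref{li-der} in terms of $\mathrm{d}$ and $\iota(v)$, each identity is an equality between operators built from $\mathrm{d}$, $\iota(v)$, $\iota(w)$, and the bracket $[-,-]_{\mathrm{str}}$ of Lemma \ref{str-lie}, all of which restrict compatibly to each stratum $S\in\MC_X$. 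So the strategy is: fix $S$, check the identity there, and then conclude.

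First I would prove (i). Since $\mathrm{d}\circ\mathrm{d}=0$ on $\Omega^\bullet_X$ (this is part of the de Rham complex \eqref{fine-de-com-X}), we compute directly
\begin{equation*}
\mathcal{L}(v)\circ\mathrm{d}
=\iota(v)\mathrm{d}\mathrm{d}+\mathrm{d}\iota(v)\mathrm{d}
=\mathrm{d}\iota(v)\mathrm{d}
=\mathrm{d}\iota(v)\mathrm{d}+\mathrm{d}\mathrm{d}\iota(v)
=\mathrm{d}\circ\mathcal{L}(v),
\end{equation*}
so $[\mathcal{L}(v),\mathrm{d}]=0$ purely formally, with no recourse to strata at all. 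This is the easy part and needs only $\mathrm{d}^2=0$.

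Next, for (ii), the plan is to verify $[\mathcal{L}(v),\iota(w)]=\iota([v,w])$ stratum by stratum. On a stratum $S$, all three sheaves restrict to the honest sheaves $\C^\infty_S$, $\mathcal{T}_S$, $\Omega^\bullet_S$ on a smooth manifold (as used in Proposition \ref{X-S}), the operator $\mathrm{d}$ restricts to the usual exterior derivative, $\iota(v)$ restricts to $\iota(v|_S)$, and $\mathcal{L}(v)$ restricts to the ordinary Lie derivative $\mathcal{L}(v|_S)$ because the Cartan formula \eqref{li-der} is compatible with restriction. The classical super-commutator identity $[\mathcal{L}(v|_S),\iota(w|_S)]=\iota([v|_S,w|_S])$ then holds, and since $[v,w]_{\mathrm{str}}|_S=[v|_S,w|_S]$ by definition of the stratified bracket, the two sides of (ii) agree after restriction to every stratum. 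Here is where torsion-freeness enters: the difference $[\mathcal{L}(v),\iota(w)]\alpha-\iota([v,w])\alpha$ is a section of $\Omega^{p-1}_X$ whose restriction to $X_{\mathrm{reg}}$ — in fact to every stratum, but the regular one suffices — vanishes; but $\Omega^{p-1}_X$ is \emph{not} torsion-free, so I cannot conclude directly. The fix is to observe that the identity I actually want is an identity of $\C^\infty_X$-linear (in fact $\C^\infty(X)$-linear in the appropriate graded sense) operators, and to test it on exact forms: any $\C^\infty$-K\"ahler $p$-form is locally a finite $\C^\infty_X$-combination of wedges $\mathrm{d}f_1\wedge\cdots\wedge\mathrm{d}f_p$ with $f_i\in\C^\infty_X$ (this is exactly the local finite generation of $\Omega^1_X$, Lemma \ref{local-finite}, together with $\Omega^p_X=\wedge^p\Omega^1_X$), and on such forms both sides are determined by their values on the $f_i$ and the structure constants, which live in $\C^\infty_X$ — a torsion-free sheaf. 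Thus it suffices to check the identity applied to functions $f\in\C^\infty_X$, where both sides land in $\C^\infty_X$; there, restriction to $X_{\mathrm{reg}}$ is injective, and the stratum-wise computation finishes the job.

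Finally, (iii) follows by combining (i) and (ii) with the super-Jacobi/Leibniz bookkeeping: expand $\mathcal{L}([v,w])=\iota([v,w])\mathrm{d}+\mathrm{d}\iota([v,w])$, rewrite $\iota([v,w])=[\mathcal{L}(v),\iota(w)]$ using (ii), and push $\mathrm{d}$ past the resulting expressions using (i), collecting terms to obtain $\mathcal{L}(v)\mathcal{L}(w)-\mathcal{L}(w)\mathcal{L}(v)$; alternatively, and more cleanly, just restrict to each stratum $S$ and invoke the classical identity $[\mathcal{L}(v|_S),\mathcal{L}(w|_S)]=\mathcal{L}([v|_S,w|_S])$, then apply the same torsion-free reduction as in (ii) (test on functions, use injectivity of restriction to $X_{\mathrm{reg}}$ on $\C^\infty_X$). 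I expect the main obstacle to be exactly this passage from stratum-wise to global identities: because $\Omega^\bullet_X$ carries torsion, one genuinely must route the argument through $\C^\infty_X$ and the local generators $\mathrm{d}f_i$ rather than arguing naively that "vanishing on $X_{\mathrm{reg}}$ implies zero." Once that reduction is set up, everything else is the standard graded-derivation calculus, and the proof of Lemma \ref{str-lie}, which guarantees $V(f)\in\C^\infty_X$ and that $[-,-]_{\mathrm{str}}$ is a genuine Lie bracket, supplies the remaining input.
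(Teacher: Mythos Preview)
Your treatment of (i) is exactly the paper's (a one-line consequence of $\mathrm{d}^2=0$), and your derivation of (iii) from (i) and (ii) is also what the paper does. The real difference is in (ii): the paper simply cites \cite[Proposition 3.6]{LPV13}, i.e.\ the standard purely algebraic argument that both $[\mathcal{L}(v),\iota(w)]$ and $\iota([v,w])$ are degree $-1$ graded derivations of $\Omega^\bullet_X$, hence agree once they agree on algebra generators $f$ and $\mathrm{d}f$, where a two-line computation gives $[\mathcal{L}(v),\iota(w)](\mathrm{d}f)=v(w(f))-w(v(f))=[v,w](f)=\iota([v,w])(\mathrm{d}f)$. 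Your route through strata and torsion-freeness is not wrong, but it is unnecessary: once you have reduced to generators (which is the substantive step you allude to with ``determined by their values on the $f_i$''), the verification already lands in $\C^{\infty}_X$ and can be done directly, without ever restricting to $X_{\mathrm{reg}}$ or invoking the classical identity on a stratum. Two small slips to clean up: the operator $[\mathcal{L}(v),\iota(w)]$ is not $\C^{\infty}_X$-linear a priori (it is a derivation, which is what makes the reduction to generators work), and the bracket in the proposition is the commutator of derivations on $\mathcal{T}_X(X)$, not the stratified bracket $[-,-]_{\mathrm{str}}$ of Lemma~\ref{str-lie}, which lives on the smaller space $\mathrm{Vect}_{\mathrm{str}}(X)$.
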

\begin{proof}
From \eqref{li-der}, we have
$
[\mathcal{L}(v),\mathrm{d}]=\mathrm{d}\circ\iota(v)\circ\mathrm{d}
-\mathrm{d}\circ\iota(v)\circ\mathrm{d}=0
$
since $\mathrm{d}\circ\mathrm{d}=0$ and this concludes the proof of (i).
The assertion (ii) can be proved by the same argument as the one used in \cite[Proposition 3.6]{LPV13}.
Combining (i) with (ii) derives the assertion (iii).
\end{proof}

Given a $\C^{\infty}$-K\"{a}hler differential 2-form $\omega\in\Omega_{X}^{2}(X)$, we will show that $\omega$ induces a natural sheaf morphism from $\mathcal{T}_{X}$ to  $\Omega^{1}_{X}$.
For any open subset $U\subset X$, define a morphism of $\C^{\infty}(U)$-modules
\begin{eqnarray}\label{omg-b-u}
  \omega^{\flat}(U): \mathcal{T}_{X}(U)&\longrightarrow&\Omega_{X}^{1}(U)\\
  v&\longmapsto& \iota(v)(\omega|_{U}). \nonumber
\end{eqnarray}
Assume that $U^{\prime}\subset X$ is another open subset such that $U^{\prime}\subset U$, then we have the corresponding morphism of $\C^{\infty}(U^{\prime})$-modules
\begin{equation}\label{omg-b-v}
\omega^{\flat}(U^{\prime}):\mathcal{T}_{X}(U^{\prime})
  \longrightarrow \Omega_{X}^{1}(U^{\prime}).
\end{equation}
Combining \eqref{omg-b-u}-\eqref{omg-b-v} and the restriction morphisms derives a commutative diagram:
\begin{equation*}\label{omg-u-v}
\vcenter{
\xymatrix@=1.3cm{
  \mathcal{T}_{X}(U) \ar[d]_{\rho_{UU^{\prime}}}
  \ar[r]^{\eqref{omg-b-u}} & \Omega^{1}_{X}(U) \ar[d]^{\bar{\rho}_{UU^{\prime}}} \\
  \mathcal{T}_{X}(U^{\prime}) \ar[r]^{\eqref{omg-b-v}} & \Omega^{1}_{X}(U^{\prime}).}
  }
\end{equation*}
This implies that \eqref{omg-b-u} defines a sheaf morphism from the tangent sheaf to the cotangent sheaf of $X$.
\begin{defn}\label{non-dege}
A $\C^{\infty}$-K\"{a}hler differential 2-form $\omega\in\Omega_{X}^{2}(X)$ is \emph{non-degenerate},
if the morphism of sheaves
\begin{equation}\label{omega-map}
  \omega^{\flat}:\mathcal{T}_{X} \longrightarrow \Omega^{1}_{X}
\end{equation}
is injective.
\end{defn}

Note that there exists a natural projection from $\Omega^{1}_{X}$ to the torsion-free cotangent sheaf $\Omega^{1}_{X,\mathrm{tf}}$.
Combining \eqref{omega-map} with the projection, we get a map of $\C^{\infty}_{X}$-modules
\begin{equation}\label{tf-omega-map}
  \omega^{\dag}:\mathcal{T}_{X} \longrightarrow \Omega^{1}_{X,\mathrm{tf}}.
\end{equation}


In particular, we have
\begin{prope}\label{t-inj}
The sheaf morphism \eqref{omega-map} is injective if and only if so does the morphism \eqref{tf-omega-map}.
\end{prope}
\begin{proof}
Consider the following commutative diagram of sheaf morphisms
\begin{equation*}
\vcenter{
\xymatrix@=1cm{
  \mathcal{T}_{X} \ar[dr]_{\omega^{\dag}} \ar[r]^{\omega^{\flat}}
                & \Omega^{1}_{X} \ar[d]^{\mathrm{pr}}  \\
                & \Omega^{1}_{X,\mathrm{tf}}. }
                }
\end{equation*}
A straightforward checking shows that the injectivity of $\omega^{\dag}$ implies that $\omega^{\flat}$ is injective.
We claim that the inverse is also true.
For any open subset $U$ in $X$, there exists a commutative diagram:
\begin{equation}\label{omg-reg}
\vcenter{
\xymatrix@=1.3cm{
  \mathcal{T}_{X}(U) \ar[d]_{\rho_{\mathrm{reg}}}
  \ar[r]^{\omega^{\flat}_{U}} & \Omega^{1}_{X}(U) \ar[d]^{\rho^{1}_{\mathrm{reg}}} \\
  \mathcal{T}_{X}(U_{\mathrm{reg}}) \ar[r]^{\omega^{\flat}_{\mathrm{reg}}} & \Omega^{1}_{X}(U_{\mathrm{reg}}).}
  }
\end{equation}
Suppose that $\omega^{\flat}$ is injective,
then $\omega^{\flat}_{U}$, $\omega^{\flat}_{\mathrm{reg}}$, and $\rho_{\mathrm{reg}}$ in \eqref{omg-reg} are injective.
Let $v$ be an element in $\mathcal{T}_{X}(U)$.
If $\omega^{\flat}_{U}(v)$ is a torsion element in $\Omega^{1}_{X}(U)$, i.e., $\rho^{1}_{\mathrm{reg}}\circ\omega^{\flat}_{U}(v)=0$,
then the commutativity of \eqref{omg-reg} implies $v=0$.
It follows that the image of $\omega^{\flat}_{U}$ is torsion-free.
Consequently, the morphism
$$
\omega^{\dag}_{U}:\mathcal{T}_{X}(U)\stackrel{\omega^{\flat}_{U}}
\longrightarrow\Omega^{1}_{X}(U)\stackrel{\mathrm{pr}}
\longrightarrow\Omega^{1}_{X,\mathrm{tf}}(U)
$$
is injective and so is the sheaf morphism $\omega^{\dag}$.
\end{proof}

We introduce the symplectic structure on a $\C^{\infty}$-stratified pseudomanifold as follows.

\begin{defn}\label{sym-spmd}
A \emph{symplectic stratified pseudomanifold} is a $\C^{\infty}$-stratified pseudomanifold $\mathcal{X}=(X,\mathcal{S}_{X},\C^{\infty}_{X})$ together with a closed non-degenerate $\C^{\infty}$-K\"{a}hler differential 2-form $\omega$, such that
the pullback $\underline{i}^\ast\omega\in\Omega^{2}(S)$ is an ordinary symplectic form on each stratum $S\in\MC_{X}$, where $\underline{i}=(i,i_{\sharp}):(S,\C^{\infty}_{S})\hookrightarrow (X,\C^{\infty}_{X})$ is the inclusion.
We call $\omega$ a \emph{symplectic form} on $\mathcal{X}$.
\end{defn}

We allow the stratum of dimension zero in the stratification $\mathcal{S}_{X}$, and consider it as a manifold with a \lq\lq trivial" symplectic form.
In general, if the non-degeneracy assumption is dropped, we call $\omega$ a \emph{presymplectic form} on $\mathcal{X}$.
In this case, a stratum $S\in\mathcal{S}_{X}$ is a \emph{presymplectic manifold} in the usual sense with the induced presymplectic form $\underline{i}^{\ast}\omega$.

In symplectic geometry, a fundamental result is the isotopy of symplectic forms attributed to Moser \cite{Mo65}.
One of the important consequences of the Moser theorem is the Darboux theorem which says that every symplectic form $\omega$ on a smooth manifold $M^{2n}$ is locally diffeomorphic to the standard symplectic form $\omega_{0}$ on $\mathbb{R}^{2n}$.
In the context of symplectic stratified pseudomanifolds, there is no obvious way to establish a Moser-type theorem since the singularities of stratified spaces.
Naturally, we have the following:
\begin{prob}
Does a Darboux-type theorem hold for symplectic stratified pseudomanifolds?
\end{prob}

We now consider the generalization of Liouville form in the setting of symplectic stratified pseudomanifolds.
Suppose $(\mathcal{X},\omega)$ is a symplectic stratified pseudomanifold of dimension $2n$.
Analogously, we say that the $\C^{\infty}$-K\"{a}hler differential form
\begin{equation}\label{liouville}
\frac{\omega^{n}}{n!}\in\Omega^{2n}(X)
\end{equation}
is the \emph{Liouville form} of $(\mathcal{X},\omega)$.
It is well-known that the Liouville form of a symplectic manifold $(M^{2n},\omega)$ is a \emph{volume form}, which means that as a global section of the vector bundle $\wedge^{2n}TM$ it is nonvanishing.
The singularity of $X$ leads to the fact that the sheaf $\Omega^{2n}_{X}$ can not be identified with the sheaf of sections of any vector bundle over $X$.
However, locally, the following proposition shows that the Liouville form also makes sense in the setting of symplectic stratified pseudomanifolds.
\begin{prop}
For any $x$ in $X$, the germ of the Liouville form \eqref{liouville} at $x$ is nonvanishing.
\end{prop}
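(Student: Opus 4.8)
The plan is to reduce the statement to a purely local computation on a single $\C^{\infty}$-chart, where the stratified structure becomes a concrete subset of Euclidean space, and then to exploit the non-degeneracy of $\omega$ on the regular stratum. Fix $x\in X$ and pick a $\C^{\infty}$-chart $\phi:U\to O\subset\mathbb{R}^{m}$ around $x$; we work in the stalk $\Omega^{2n}_{X,x}=\wedge^{2n}\Omega^{1}_{X,x}$, a finitely generated module over the local ring $\C^{\infty}_{X,x}$ by Lemma~\ref{local-finite}. The germ in question is $(\omega^{n}/n!)_{x}$. Suppose, for contradiction, that this germ vanishes, i.e.\ there is a smaller open neighborhood $W\subset U$ of $x$ on which $\omega^{n}/n!\equiv0$ as a section of $\Omega^{2n}_{X}$.

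First I would pass to the regular stratum. By Definition~\ref{sym-spmd}, $X_{\mathrm{reg}}$ is an open dense subset of $X$, and the restriction $\underline{i}^{\ast}\omega$ to the (top-dimensional) stratum containing generic points of $W$ is an ordinary symplectic form on a $2n$-dimensional manifold; here I use that $2n=\dim X$, so the regular stratum has dimension exactly $2n$. On that stratum, the classical fact that a symplectic form has nonvanishing Liouville volume form gives $\bigl(\underline{i}^{\ast}\omega\bigr)^{n}/n!\neq0$ at every point. Next I would invoke the compatibility between $\C^{\infty}$-K\"ahler differential forms and ordinary differential forms on a smooth manifold (used already in Proposition~\ref{X-S}): the restriction map $\Omega^{2n}_{X}(W)\to\Omega^{2n}(W_{\mathrm{reg}})$ sends the germ of $\omega^{n}/n!$ to the germ of $(\underline{i}^{\ast}\omega)^{n}/n!$ on the regular stratum, because $\underline{i}^{\ast}$ is a ring homomorphism commuting with wedge products. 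Hence if the germ of $\omega^{n}/n!$ were zero in $\Omega^{2n}_{X,x}$, its restriction to $W_{\mathrm{reg}}$ would vanish near $x$, contradicting non-vanishing of the symplectic volume on the $2n$-manifold $W_{\mathrm{reg}}$ — at least at points of $W_{\mathrm{reg}}$ arbitrarily close to $x$, which exist by density.

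The step I expect to be the main obstacle is making the last contradiction rigorous: vanishing of the \emph{germ} at $x$ only forces vanishing on some neighborhood $W$ of $x$ in $X$, and then the restriction to $W_{\mathrm{reg}}$ vanishes, but $x$ itself need not lie in $X_{\mathrm{reg}}$, so I cannot directly evaluate at $x$. The resolution is density: $W\cap X_{\mathrm{reg}}\neq\emptyset$ since $X_{\mathrm{reg}}$ is dense, pick $y\in W\cap X_{\mathrm{reg}}$; then $(\underline{i}^{\ast}\omega)^{n}_{y}/n!\neq0$ because $\underline{i}^{\ast}\omega$ is a genuine symplectic form on the component of $X_{\mathrm{reg}}$ through $y$, yet this value equals the image under restriction of $(\omega^{n}/n!)_{y}$, which is $0$ — a contradiction. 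A minor technical point to check is that the natural map $\Omega^{1}_{X,x}\to\Omega^{1}(W_{\mathrm{reg}})$ (and its exterior powers) is the one induced by the inclusion morphism $\underline{i}$ of local $\C^{\infty}$-ringed spaces and is compatible with localization, which follows from functoriality of $\C^{\infty}$-K\"ahler differentials as used throughout Subsection~\ref{app-a2}. Putting these together establishes that the germ of the Liouville form at every point of $X$ is nonvanishing.
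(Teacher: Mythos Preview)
Your argument is correct and is essentially the paper's own proof: assume the germ vanishes, pass to a neighborhood on which $\omega^{n}$ is identically zero, restrict to the (dense) regular stratum, and contradict the fact that $\underline{i}^{\ast}\omega$ is an ordinary symplectic form there so that its top power is nowhere zero. The paper packages the same contradiction in the language of Definition~\ref{tor-ele} (``$\omega^{n}$ is not a torsion element''), and your choice of a $\C^{\infty}$-chart and the appeal to Lemma~\ref{local-finite} are harmless but unnecessary for the argument.
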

\begin{proof}
For every $1\leq k\leq n$, from Definition \ref{sym-spmd}, we know that the $\C^{\infty}$-K\"{a}hler differential $2k$-form $\omega^{k}\in\Omega^{2k}(X)$ is not a torsion element in the sense of Definition \ref{tor-ele}.
If the assertion would not hold, then there exists a point $x$ such that the germ of $\omega^{n}$ at $x$ is zero.
This implies that $\omega^{n}$ vanishes on some open neighborhood $U$ of $x$ in $X$.
Consequently, we get $\omega^{n}|_{U_{\mathrm{reg}}}=0$ and this contradicts the fact that $\omega^{n}$ is not a torsion element.
\end{proof}

To mimic compact symplectic manifolds cohomologically, we introduce the following definition.
\begin{defn}\label{c-sym}
 Let $\mathcal{X}=(X,\mathcal{S}_{X},\C^{\infty}_{X})$ be a compact $\C^{\infty}$-stratified pseudomanifold with dimension $2n$.
 By a \emph{cohomologically symplectic (or c-symplectic)} structure on $X$, we mean a \v{C}ech cohomology class $[\omega]\in\check{\mathrm{H}}^{2}(M, \underline{\mathbb{R}})$ such that $[\omega^{n}]$ is nonzero in $\check{\mathrm{H}}^{2n}(M, \underline{\mathbb{R}})$.
\end{defn}
\subsection{Poisson bracket}\label{str-pos}

For a symplectic manifold $(M,\omega)$, it is useful to consider the so-called Poisson bracket on the algebra of smooth functions $\C^{\infty}(M)$.
Such bracket determines, and conversely is determined by, the symplectic form.
By contrast to the symplectic structure, the advantage of Poisson bracket is that it is a purely algebraic structure and can be generalized immediately to singular spaces such as algebraic varieties, (cf. \cite[Part I, \S 1.2]{LPV13}).
In complete analogy with the definition of Poisson bracket on a symplectic manifold,
we first consider the counterpart of Hamiltonian vector field in the stratified setting.

Let $(\mathcal{X},\omega)$ be a symplectic stratified pseudomanifold.
By definition, the $\C^{\infty}(X)$-linear map
$$
\omega^{\flat}:\mathcal{T}_{X}(X)\longrightarrow\Omega^{1}_{X}(X)
$$
is only injective.
So we can not define the derivation $D_{f}$ for an arbitrary smooth function $f$ on $X$ by the usual equation
\begin{equation}\label{h-equ}
\iota(D_{f})\omega=\mathrm{d}f.
\end{equation}
Denote by $\C^{\infty}(X, \omega)\subset\C^{\infty}(X)$ the subset of smooth functions such that $\mathrm{d}f\in\mathrm{im}\,(\omega^{\flat})$.
\begin{defn}\label{ham-v-f}
For any $f\in\C^{\infty}(X, \omega)$,
the unique global section $D_{f}\in\mathcal{T}_{X}(X)$ determined by the equation \eqref{h-equ}
is called the \emph{Hamiltonian derivation} associated to $f$.
\end{defn}

Next, we show that a symplectic form induces a natural bracket on $\C^{\infty}(X, \omega)$, and makes it into a Poisson algebra.
For any smooth functions $f$ and $g$ in $\C^{\infty}(X, \omega)$,
we can define a function
$$
\omega(D_{f},D_{g})=\iota(D_{f})\circ\iota(D_{g})\omega
=-\iota(D_{g})\circ\iota(D_{f})\omega=-\omega(D_{g},D_{f}).
$$
The smoothness of $\omega(D_{f},D_{g})$ follows from the following commutative diagram
\begin{equation*}
\xymatrix@=1.3cm{
  \C^{\infty}(X) \ar[d]_{\mathrm{d}} \ar[r]^{D_{f}} & \C^{\infty}(X) \\
  \Omega^{1}(X). \ar[ur]_{\iota(D_{f})}   }
\end{equation*}
This motivates the following definition.
\begin{defn}\label{possion}
The \emph{Poisson bracket} on a symplectic stratified pseudomanifold $(\mathcal{X},\omega)$ is a map defined by
\begin{equation*}
  \{-,-\}_{\omega}: \C^{\infty}(X, \omega)\times
  \C^{\infty}(X, \omega)\rightarrow
  \C^{\infty}(X, \omega),\,\,\,
  (f,g)\mapsto \omega(D_{f},D_{g}).
\end{equation*}
\end{defn}

Recall that a \emph{Poisson algebra} over $\mathbb{R}$ is a commutative associate $\mathbb{R}$-algebra $(\mathscr{A},\cdot)$ with a Lie bracket $\{-, -\}$ satisfying the \emph{Leibniz rule}:
  $$
  \{f\cdot g,h\}=f\cdot\{g,h\}+g\cdot\{f,h\},
  $$
for any elements $f$, $g$ and $h$ in $\mathscr{A}$.

If we endow $\C^{\infty}(X, \omega)$ with the usual multiplication of smooth functions, then it is obvious that $(\C^{\infty}(X, \omega), \cdot)$ becomes a commutative associate $\mathbb{R}$-algebra with unit $1$.
Moreover, combining with the bracket $\{-,-\}_{\omega}$ derives:
\begin{lem}\label{str-pos-alg}
The triple $\bigl(\C^{\infty}(X, \omega),\cdot,\{-,-\}_{\omega}\bigr)$ forms a Poisson algebra over $\mathbb{R}$.
\end{lem}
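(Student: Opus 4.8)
The claim is that $\bigl(\C^{\infty}(X,\omega),\cdot,\{-,-\}_{\omega}\bigr)$ is a Poisson algebra. Since we already know from the preceding discussion that $(\C^{\infty}(X,\omega),\cdot)$ is a commutative unital $\mathbb{R}$-algebra and that $\{-,-\}_{\omega}$ is a well-defined antisymmetric $\mathbb{R}$-bilinear map into $\C^{\infty}(X,\omega)$, the work to be done is threefold: (a) verify that the bracket really does land in $\C^{\infty}(X,\omega)$ (i.e.\ that $\{f,g\}_{\omega}$ again admits a Hamiltonian derivation); (b) establish the Leibniz rule $\{f\cdot g,h\}_\omega = f\cdot\{g,h\}_\omega + g\cdot\{f,h\}_\omega$; and (c) establish the Jacobi identity, so that $\{-,-\}_\omega$ is a Lie bracket. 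The overarching strategy is to mimic the classical symplectic-manifold proof verbatim, but performed on the level of $\C^\infty$-K\"ahler differential forms and $\C^\infty$-derivations, using the Cartan-type calculus assembled in Proposition \ref{cartan-fomla} together with the injectivity of $\omega^\flat$ to transfer identities from $1$-forms back to derivations.

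\textbf{Key steps.} First I would record the basic translation: for $f\in\C^\infty(X,\omega)$ the Hamiltonian derivation $D_f$ is characterised by $\iota(D_f)\omega=\mathrm{d}f$, so $\{f,g\}_\omega=\omega(D_f,D_g)=\iota(D_f)\iota(D_g)\omega=\iota(D_f)\mathrm{d}g=D_f(g)=-D_g(f)$. Next, for the Leibniz rule, compute $D_{f}(g\cdot h)=D_f(g)\cdot h+g\cdot D_f(h)$ using that $D_f$ is a $\C^\infty$-derivation (this is exactly the derivation property, available by the Cushman--\'Sniatycki theorem quoted in the excerpt), which gives $\{f,g\cdot h\}_\omega = \{f,g\}_\omega\cdot h + g\cdot\{f,h\}_\omega$; antisymmetry then yields the stated form. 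For the Jacobi identity the cleanest route is to first prove $D_{\{f,g\}_\omega}=[D_f,D_g]_{\mathrm{str}}$: apply Proposition \ref{cartan-fomla}(ii), namely $[\mathcal{L}(D_f),\iota(D_g)]=\iota([D_f,D_g])$, to $\omega$, and use $\mathcal{L}(D_f)\omega = \iota(D_f)\mathrm{d}\omega + \mathrm{d}\iota(D_f)\omega = \mathrm{d}\mathrm{d}f = 0$ (here $\mathrm{d}\omega=0$ since $\omega$ is closed) to obtain $\iota([D_f,D_g])\omega = \mathcal{L}(D_f)\iota(D_g)\omega = \mathcal{L}(D_f)\mathrm{d}g = \mathrm{d}(D_f g) = \mathrm{d}\{f,g\}_\omega$. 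Injectivity of $\omega^\flat$ then forces $[D_f,D_g]_{\mathrm{str}} = D_{\{f,g\}_\omega}$, which simultaneously settles point (a) above (the bracket lands in $\C^\infty(X,\omega)$ because its differential is in the image of $\omega^\flat$). The Jacobi identity now follows formally: $\{\{f,g\}_\omega,h\}_\omega = D_{\{f,g\}_\omega}(h) = [D_f,D_g]_{\mathrm{str}}(h) = D_f(D_g(h)) - D_g(D_f(h)) = \{f,\{g,h\}_\omega\}_\omega - \{g,\{f,h\}_\omega\}_\omega$, which rearranges to the Jacobi identity.

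\textbf{Main obstacle.} The routine calculations (Leibniz, the chain of Cartan-formula manipulations) go through as in the smooth case; the genuinely delicate point is making sure every object used actually lives in the right sheaf and that the identities of Proposition \ref{cartan-fomla}, stated for global derivations $v,w\in\mathcal{T}_X(X)$, apply to $D_f,D_g$ — in particular that $[D_f,D_g]_{\mathrm{str}}$ (the stratified Lie bracket, defined stratum-wise) coincides with the commutator of $\C^\infty$-derivations so that Proposition \ref{cartan-fomla}(ii) is literally applicable. This is where one leans on Lemma \ref{str-lie} and the inclusion $\mathrm{Vect}_{\mathrm{str}}(X)\subset \mathrm{Der}_{\C^\infty(X)}(\C^\infty(X),\C^\infty(X))\cong\mathcal{T}_X(X)$ together with the uniqueness clause in Definition \ref{ham-v-f}: since $D_{\{f,g\}_\omega}$ is the \emph{unique} global section of $\mathcal{T}_X$ with $\iota(\,\cdot\,)\omega=\mathrm{d}\{f,g\}_\omega$, and we have exhibited $[D_f,D_g]_{\mathrm{str}}$ as such a section, they must agree. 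Once that identification is secured the Poisson axioms drop out mechanically.
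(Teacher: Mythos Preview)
Your proposal is correct and follows essentially the same route as the paper: both prove $D_{\{f,g\}_\omega}=[D_f,D_g]$ via the Cartan calculus of Proposition~\ref{cartan-fomla} (using $\mathrm{d}\omega=0$ and $\mathcal{L}(D_f)\omega=0$), then read off the Jacobi identity from $\{\{f,g\}_\omega,h\}_\omega=[D_f,D_g](h)$, and treat Leibniz as a direct consequence of $D_f$ being a derivation. One small simplification: your ``main obstacle'' is not really there---Proposition~\ref{cartan-fomla} is stated for $v,w\in\mathcal{T}_X(X)$ with $[v,w]$ the commutator of $\C^{\infty}$-derivations, and $D_f,D_g\in\mathcal{T}_X(X)$ by construction, so you never need to pass through $\mathrm{Vect}_{\mathrm{str}}(X)$ or $[-,-]_{\mathrm{str}}$ at all.
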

\begin{proof}
First, we show that $\{-,-\}_{\omega}$ satisfies the Jacobi identity.
According to the definition of Lie derivative \eqref{li-der} and Proposition \ref{cartan-fomla}, for any $f,g\in\C^{\infty}_{\omega}(X)$ we have
\begin{eqnarray*}
  \iota([D_{f},D_{g}])\omega
  &=& \mathcal{L}(D_{f})\circ\iota(D_{g})\omega- \iota(D_{g})\circ\mathcal{L}(D_{f})\omega\\
  &=&  \mathrm{d}\circ\iota(D_{f})\circ\iota(D_{g})\omega+
       \iota(D_{f})\circ\mathrm{d}\circ\iota(D_{g})\omega\\
  & &  -\iota(D_{g})\circ\mathrm{d}\circ\iota(D_{f})\omega-
       \iota(D_{g})\circ\iota(D_{f})\circ\mathrm{d}\omega\\
  &=&  \mathrm{d}\circ\iota(D_{f})\circ\iota(D_{g})\omega\\
  &=&  \mathrm{d}\bigl(\omega(D_{f},D_{g})\bigr)\\
  &=&  \mathrm{d}(\{f,g\}_{\omega}).
\end{eqnarray*}
This implies
$
D_{\{f,g\}_{\omega}}=[D_{f},D_{g}]
$
and therefore we get
\begin{eqnarray*}
  \{\{f,g\}_{\omega},h\}_{\omega}
  &=& \omega\bigl(D_{\{f,g\}_{\omega}},D_{h}\bigr) \\
  &=& \omega\bigl([D_{f},D_{g}],D_{h}\bigr) \\
  &=& [D_{f},D_{g}](h).
\end{eqnarray*}
Recall the definition of the Lie bracket of derivations.
Due to the previous equality, we obtain
\begin{eqnarray*}
  \{\{f,g\}_{\omega},h\}_{\omega}
   &=&  [D_{f},D_{g}](h)\\
   &=& D_{f}\cdot D_{g}(h)-D_{g}\cdot D_{f}(h) \\
   &=& D_{f}(\{g,h\}_{\omega})- D_{g}(\{f,h\}_{\omega})\\
   &=& \{f,\{g,h\}_{\omega}\}_{\omega}-\{g,\{f,h\}_{\omega}\}_{\omega},
\end{eqnarray*}
which is equivalent to the Jacobi identity
$$
\{\{f,g\}_{\omega},h\}_{\omega}+\{\{g,h\}_{\omega},f\}_{\omega}+\{\{h,f\}_{\omega},g\}_{\omega}=0.
$$
Via a straightforward manipulation, we can verify the Leibniz rule and this completes the proof.
\end{proof}

\begin{rem}
The notion of stratified symplectic space was first introduced by in \cite{SL91}.
For a differential-geometric study of singular symplectic quotients, it is necessary to consider the analogue of tangent bundles in the stratified setting.
From a viewpoint of  subcartesian space theory, Pflaum refined the notion of smooth structure in \cite{Pf01}.
Pflaum's version of a smooth structure, which we follow in this paper, is more differential-geometric.
In particular, if $X$ is a Whitney (A) smooth stratified space, then it allows a canonical construction of stratified tangent bundle $TX$.
Then a symplectic structure on $X$ is defined to be a smooth section
$$
\Lambda:X\longrightarrow TX\otimes TX
$$
such that, for any stratum $S$, the restriction
$\Lambda|_{S}:S\rightarrow TS\otimes TS$
yields a symplectic structure on $S$ (cf. \cite[Definition 2.6.1]{Pf01}).
Another refinement of stratified symplectic spaces was introduced by Somberg--L\^{e}--Van\v{z}ura \cite[Definition 3]{SLV13}.
We now briefly review it.
There a \emph{stratified symplectic form} on a stratified space $X$ is defined to be a collection of usual symplectic forms on strata (cf. \cite[Definition 6]{SLV13}).
In particular, if there exists a differential 2-form $\tilde{\omega}\in\Omega^{2}(X)$ in the sense of Mostow \cite[Section 2]{Mo79} such that the restriction of $\tilde{\omega}$ to each stratum $S$ coincides with $\omega_{S}$, then they say that the smooth structure $\C^{\infty}(X)$ is a weakly symplectic smooth structure (cf. \cite[Definition 6]{SLV13}).
Contrasting with the notions of symplectic structures above, the definition of symplectic stratified pseudomanifold (Definition \ref{sym-spmd}) is sheaf-theoretic.
This enable us to consider the symplectic manifolds and the symplectic stratified psuedomanifolds in a uniform framework of local $\C^{\infty}$-ringed spaces, and apply various techniques in $\C^{\infty}$-algebraic geometry to studying the symplectic geometry of smooth stratified pseudomanifolds.
\end{rem}

\subsection{K\"{a}hler stratified pseudomanifolds}

Consider a complex vector space $\mathbb{C}^{m}$.
Denote $z_{i}=x_{i}+\sqrt{-1}\cdot y_{i}$ for each $1\leq i\leq m$.
Then we can view $\mathbb{C}^{m}$ as the $2m$-dimensional real vector space $\mathbb{R}^{2m}$ and the topological structure of $\mathbb{C}^{m}$ is identical to that of $\mathbb{R}^{2m}$.
Suppose that $\mathcal{X}=(X,\mathcal{S}_{X},\C^{\infty}_{X})$ is a smooth stratified pseudomanifold, we now introduce the complex analytic structure on $\mathcal{X}$.
\begin{defn}\label{c-s-psd}
Given a point $x$ in $X$, by a \emph{holomorphic singular chart} around $x$ we mean a pair $(U,\phi)$ such that the following conditions hold:
\begin{itemize}
  \item [(i)] $U\subset X$ is an open neighborhood of $x$;
  \item [(ii)] $\phi:U\rightarrow
      \mathrm{im}\,(\phi)\subset\mathbb{C}^{m}$
      is a homeomorphism such that the image $\mathrm{im}\,(\phi)$ is a locally closed subset of $\mathbb{C}^{m}$;
  \item [(iii)]
      $\phi(U\cap S)$ is a complex submanifold in $\mathbb{C}^{m}$ for each stratum $S\in\mathcal{S}_{X}$.
\end{itemize}
\end{defn}

Given two holomorphic singular charts $\phi_{U}:U\rightarrow\mathbb{C}^{m_{1}}$ and
$\phi_{V}:V\rightarrow\mathbb{C}^{m_{2}}$ satisfying $U\cap V\neq\emptyset$.
Akin to the compatibility for $\C^{\infty}$-singular charts in Definition \ref{ssp}, we say that $(U,\phi_{U})$ and $(V,\phi_{V})$ are \emph{compatible}, if for every $x\in U\cap V$ there exist an open neighborhood $\mathcal{U}_{1}$ of $\phi_{U}(x)$ in $\mathbb{C}^{m_{1}}$,
an open neighborhood $\mathcal{U}_{2}$ of $\phi_{V}(x)$ in $\mathbb{C}^{m_{2}}$,
and \emph{holomorphic} mappings $H_{1}:\mathcal{U}_{1}\rightarrow\mathbb{C}^{m_{2}}$ and
$H_{2}:\mathcal{U}_{2}\rightarrow\mathbb{C}^{m_{1}}$ such that $H_{1}=\phi_{V}\circ\phi^{-1}_{U}$ on $\phi_{U}(U\cap V)\cap\mathcal{U}_{1}$ and $H_{2}=\phi_{U}\circ\phi^{-1}_{V}$ on $\phi_{V}(U\cap V)\cap\mathcal{U}_{2}$.

\begin{defn}\label{hol-stru}
By a \emph{holomorphic structure} on a smooth stratified pseudomanifold $\mathcal{X}$ we mean a collection of holomorphic singular charts
$$
\mathfrak{A}=
\{(U,\phi_{U}),(V,\phi_{V}),(W,\phi_{W}),\cdots\}
$$
on $X$ such that:
\begin{itemize}
  \item [(i)] $\{U,V,W,\cdots\}$ constitutes an open covering of $X$;
  \item [(ii)] any two holomorphic singular charts in $\mathfrak{A}$ are compatible with each other;
  \item [(iii)] $\mathfrak{A}$ is maximal: if a holomorphic singular chart $(U^{\prime},\phi^{\prime})$ is compatible with all elements in $\mathfrak{A}$, then $(U^{\prime},\phi^{\prime})\in\mathfrak{A}$.
\end{itemize}
A \emph{complex stratified pseudomanifold} is a smooth stratified pseudomanifold $\mathcal{X}$ endowed with a holomorphic structure $\mathfrak{A}$.
\end{defn}

From definition, all strata of a complex stratified pseudomanifold are complex manifolds.
\begin{defn}\label{hol-fun}
A function $f:X\rightarrow\mathbb{C}$ on a complex stratified pseudomanifold $\mathcal{X}$ is \emph{holomorphic},
if for any holomorphic singular chart $\phi:U\rightarrow\mathbb{C}^{m}$
of a holomorphic structure $\mathfrak{A}$ defining it, there exists a holomorphic function $F:\mathbb{C}^{m}\rightarrow\mathbb{C}$ such that $f|_{U}=F\circ\phi$.
\end{defn}

Similar to complex manifolds, we denote by $\CO_{X}$ the \emph{sheaf of holomorphic functions} on $X$, i.e., for any open subset $U\subset X$ one has
$$
\CO_{X}(U)=\Gamma(U,\CO_{X})
=\bigl\{f:U\rightarrow\mathbb{C}\,|\,f\,\,\,\mathrm{is}\,\,\,\mathrm{holomorphic}\bigr\}.
$$
In general, we write a complex stratified pseudomanifold as a triple $\mathcal{X}=(X,\MC_{X},\CO_{X})$ and call $\CO_{X}$ the \emph{structure sheaf} of $\mathcal{X}$.
By definition, given a holomorphic singular chart $\phi:U\rightarrow \mathbb{C}^{m}$, the morphism
$\phi^{\ast}:\CO(\mathbb{C}^{m})\rightarrow\CO_{X}(U)$ is \emph{surjective} and therefore the algebra of holomorphic functions over $U$ is canonically isomorphic to the algebra $\CO(\mathbb{C}^{m})\big/\mathcal{I}$, where $\mathcal{I}$ is the ideal of holomorphic functions on $\mathbb{C}^{m}$ which are vanishing on $\phi(U)$.

\begin{defn}\label{com-str-mp}
Let $\mathcal{X}=(X,\mathcal{S}_{X},\CO_{X})$
and
$\mathcal{Y}=(Y,\mathcal{S}_{Y},\CO_{Y})$
be two complex stratified pseudomanifolds.
A smooth map $f:X\rightarrow Y$ is a \emph{holomorphic map} if it satisfies:
\begin{itemize}
  \item [(i)] $f$ maps a stratum of $\MC_{X}$ to a stratum of $\MC_{Y}$;
  \item [(ii)] for each open subset $V$ in $Y$ and each $g\in\CO_{Y}(V)$ there holds $f^{\ast}(g)\in\CO_{X}(f^{-1}(V))$.
\end{itemize}
\end{defn}
If we view a complex stratified pseudomanifold as a reduced local ringed space over $\mathbb{C}$, then a holomorphic map of complex stratified pseudomanifolds $f:\mathcal{X}\rightarrow\mathcal{Y}$ actually defines a morphism of local ringed spaces
$\underline{f}=(f,f_{\sharp}):(X,\CO_{X})\rightarrow(Y,\CO_{Y})$.
Moreover, complex stratified pseufomanifolds and holomorphic maps constitute a category containing the category of complex manifolds as a full subcategory.

\begin{ex}[Complex analytic varieties]\label{com-ana-variety}
Here is a basic example of complex stratified pseudomanifold from complex analytic geometry.
Assume that $V$ is an analytic variety of a complex manifold $M$.
By Example \ref{c-a-v}, we know that $V$ is a smooth stratified pseudomanifold with respect to a canonical stratification $\mathcal{S}_{V}$.
Consider the holomorphic atlas of $M$:
$$
\mathfrak{A}_{M}=
\bigl\{\phi_{\lambda}:U_{\lambda}\rightarrow
\mathbb{C}^{n}\,\big|\,
\lambda\in\Lambda\bigr\},
$$
where $n=\mathrm{dim}_{\mathbb{C}}\,(M)$.
From definition, the collection
$$
\mathfrak{A}_{V}:=
\bigl\{\phi_{\lambda}|_{V\cap U_{\lambda}}:V\cap U_{\lambda}\rightarrow
\mathbb{C}^{n}\,\big|\,
\lambda\in\Lambda\bigr\}
$$
yields a holomorphic structure of $V$ in the sense of Definition \ref{hol-stru}, and hence $V$ becomes a complex stratified pseudomanifold.
In particular, it follows that both complex affine varieties and projective varieties are complex stratified pseudomanifolds.
\end{ex}

\begin{ex}[Complex spaces]\label{com-spa}
Let $(X,\CO_{X})$ be a reduced complex space.
Then there exists a canonical stratification by singularities, denoted by $\mathcal{S}^{\mathrm{sing}}_{X}$, which has complex manifolds as strata (cf. \cite[Proposition 5.6]{Dem12}).
For every $x$ in $X$, let
\begin{equation}\label{chart-x}
\phi_{x}: U_{x}\longrightarrow D\subset \mathbb{C}^{K}
\end{equation}
be the local embedding of $X$ into an open domain $D$ of $\mathbb{C}^{K}$.
Here $U_{x}$ is an open neighborhood around $x$ and $K$ depends on $x$.
It follows from the definition that \eqref{chart-x} gives rise to a holomorphic singular chart on $X$.
Moreover, we get a collection of holomorphic singular charts:
$$
\mathfrak{A}_{X}=\{(U_{x},\phi_{x})\,|\, x\in X\}.
$$
We now have to verify the compatibility of each pair of elements in $\mathfrak{A}_{X}$.
Given another point $y\in X$,
we denote the holomorphic singular chart around $y$ by
\begin{equation*}\label{chart-y}
\phi_{y}:U_{y}\longrightarrow D^\prime \subset\mathbb{C}^{L}.
\end{equation*}
Set $W=U_{x}\cap U_{y}$.
Then $V:=\phi_{y}(W)$ is an open subset in the complex analytic set $\phi_{y}(U_{y})$ and hence we have
\begin{equation}\label{V-function}
\mathcal{O}_{X}|_{W}\cong(\phi_{y}|_{W})^{\ast}\mathcal{O}_{V}
=(\phi_{y}|_{W})^{\ast}\mathcal{O}_{D^{\prime}}.
\end{equation}
For each $1\leq i\leq K$, let $\pi_{i}:\mathbb{C}^{K}\rightarrow\mathbb{C}$ be the projection onto the $i$-th coordinate.
Observe that $\pi_{i}\circ\phi_{x}:W\rightarrow\mathbb{C}$ is a holomorphic function on $W$.
Due to \eqref{V-function}, for any $z\in W$, there exist open neighborhoods $P_{i}\subset W$ of $z$ and $Q_{i}\subset D^{\prime}$ of $\phi_{y}(z)$ such that $\phi_{y}(P_{i})\subset Q_{i}$ and
$$
(\pi_{i}\circ\phi_{x})|_{P_{i}}=(\phi_{y}|_{P_{i}})^{\ast}(h_{i})=h_{i}\circ(\phi_{y}|_{P_{i}})
$$
for some holomorphic function $h_{i}\in\mathcal{O}(Q_{i})$.
Set $P=\cap_{i=1}^{K}P_{i}$, $Q=\cap_{i=1}^{K}Q_{i}$ and $H=(h_{1},\cdots,h_{K})$.
Then we get a commutative diagram:
\begin{equation*}\label{l-to-k}
\vcenter{
\xymatrix@=1.3cm{
  P \ar[d]_{\phi_{y}} \ar[r]^{\phi_{x}} &\mathbb{C}^{K}      \\
  Q \ar[ur]_{H}}
  }
\end{equation*}
As a result, we have
$H=\phi_{x}\circ\phi^{-1}_{y}$ on $\phi_{y}(P)$.
Using the same argument, we can also verify that $\phi_{y}\circ\phi^{-1}_{x}$ has a local holomorphic extending.
From definition, $\mathfrak{A}_{X}$ determines a holomorphic structure on $X$.
As a result, together with the stratification $\mathcal{S}^{\mathrm{sing}}_{X}$, the complex space $(X,\CO_{X})$ admits a natural structure of complex stratified pseudomanifold.
\end{ex}

\begin{defn}\label{k-s-p}
A \emph{K\"{a}hler stratified pseudomanifold} is a complex stratified pseudomanifold $(X,\mathcal{S}_{X},\CO_{X})$ equipped with a symplectic form $\omega$, such that
the pullback $\underline{i}^\ast\omega$ is a usual K\"{a}hler structure on each stratum $S\in\MC_{X}$, where $\underline{i}=(i,i_{\sharp}):(S,\mathcal{O}_{S})\hookrightarrow (X,\CO_{X})$ is the inclusion.
\end{defn}

\begin{rem}
In the literature, there are different notions of K\"{a}hler structures on stratified spaces.
As a continuation  of \cite{Va89}, Heinzner--Huckleberry--Loose \cite{HHL94} introduced the notion of stratified K\"{a}hlerian space which is a K\"{a}hler space equipped with a complex stratification.
Based on Sjamaar--Lerman's stratified symplectic spaces, Huebschmann \cite[Section 2]{Hu04} proposed another definition of K\"{a}hler structure in the stratified setting.
More precisely, a stratified K\"{a}hler space in the sense of Huebschmann is a stratified symplectic space together with a stratified K\"{a}hler polarization.
\end{rem}
\section{Singular K\"{a}hler spaces}\label{sig-kah}
In this section, we first show that every singular K\"{a}hler space in the sense of Moishezon \cite{Mo75} admits a natural structure of K\"{a}hler stratified pseudomanifold, and then we consider the K\"{a}hler class corresponding to a singular K\"{a}hler quotient.
Throughout of this section, we assume that $(X,\CO_{X})$ is a \emph{reduced normal} complex space.

\subsection{K\"{a}hler classes and K\"{a}hler forms}\label{k-c-c}
First we give a brief review on some basic notations on K\"{a}hler spaces.
By a \emph{real-valued pluriharmonic} function on $X$, we mean a smooth function which locally is the real part of some holomorphic function, and we define the sheaf $\mathscr{PH}_{X,\mathbb{R}}$ of real-valued pluriharmonic functions.
We say that a continuous (resp. smooth) function $f: X\rightarrow\mathbb{R}$ is a continuous (resp. smooth) \emph{plurisubharmonic (p.s.h.)} function on $X$, if locally it is the restriction of a continuous (resp. smooth)  p.s.h. function on an open subset in $\mathbb{C}^{N}$ for a local embedding of $X$ into $\mathbb{C}^{N}$ (cf. \cite[Section 5]{FN80}).
In particular, we call $f$ a \emph{strictly p.s.h. function} (in the sense of perturbations), if for each smooth function $h$ and each relatively compact open subset $U$ there exists $\epsilon>0$ such that $f+t\cdot h$ is p.s.h. on $U$ for all $|t|<\epsilon$ (cf. \cite[Definition 55]{HS20}).
We denote by sheaves $\mathscr{P}^{0}_{X}$ (resp. $\mathscr{P}^{\infty}_{X}$) of continuous (resp. smooth) p.s.h.  functions, and $\mathscr{SP}^{0}_{X}$ (resp. $\mathscr{SP}^{\infty}_{X}$) of continuous (resp. smooth) strictly p.s.h. functions.

\begin{defn}(cf. \cite{Mo75})\label{kahler space}
A \emph{K\"{a}hler metric} on $X$ is defined to be an open covering $\{U_{\alpha}\}_{\alpha\in\Lambda}$ of $X$ together with a collection of strictly p.s.h. $\C^{\infty}$-functions $\{\varphi_{\alpha}:U_{\alpha}\rightarrow\mathbb{R}\}_{\alpha\in\Lambda}$ such that $(\varphi_{\alpha}-\varphi_{\beta})|_{U_{\alpha}\cap U_{\beta}}$ is pluriharmonic when  $U_{\alpha}\cap U_{\beta}\neq\emptyset$.
A \emph{K\"{a}hler space} is a complex space equipped with a K\"{a}hler metric.
\end{defn}

In the literature,  a K\"{a}hler metric on $X$ is also defined to be a system of \emph{continuous} strongly p.s.h. functions $\varphi_{\alpha}\in\mathscr{SP}^{0}_{X}(U_{\alpha})$ corresponding to an open covering $\{U_{\alpha}\}_{\alpha\in\Lambda}$ such that $(\varphi_{\alpha}-\varphi_{\beta})|_{U_{\alpha}\cap U_{\beta}}\in\mathscr{PH}_{X, \mathbb{R}}(U_{\alpha}\cap U_{\beta})$ when $U_{\alpha}\cap U_{\beta}\neq\emptyset$.
To avoid confusion, we call such a collection $\{(U_{\alpha}, \varphi_{\alpha})\}_{\alpha\in\Lambda}$ a K\"{a}hler metric with \emph{continuous} local potentials.

Set $\C^{\infty}_{X,\mathbb{R}}$ the sheaf of real-valued smooth functions and $\mathscr{K}^{1}_{X,\mathbb{R}}=\C^{\infty}_{X,\mathbb{R}}/\mathscr{PH}_{X,\mathbb{R}}$.
Equivalently, by definition, a K\"{a}hler metric $\kappa:=\{(U_{\alpha}, \varphi_{\alpha})\}_{\alpha\in\Lambda}$ on $X$ corresponds to a class in the 0-th \v{C}ech cohomology $\check{\mathrm{H}}^{0}(X, \mathscr{K}^{1}_{X,\mathbb{R}})$.
In particular, two collections $\{(U_{\alpha}, \varphi_{\alpha})\}_{\alpha\in\Lambda}$ and $\{(V_{i}, \phi_{i})\}_{i\in\Lambda^{\prime}}$ define the same K\"{a}hler metric on $X$ if and only if
$$
(\varphi_{\alpha}-\phi_{i})|_{U_{\alpha}\cap V_{i}}\in \mathscr{PH}_{X,\mathbb{R}}(U_{\alpha}\cap V_{i}),
$$
for all $U_{\alpha}\cap V_{i}\neq\emptyset$.
For a given K\"{a}hler metric $\kappa$ on $X$, we can associate to $\kappa$ a cohomology class called the \emph{K\"{a}hler class} as follows (cf. \cite[\S 3.1]{GK20}).
Consider the natural short exact sequence of sheaves on $X$:
\begin{equation}\label{s-e-ph}
\xymatrix@C=0.5cm{
  0 \ar[r] & \mathscr{PH}_{X,\mathbb{R}} \ar[rr]^{} && \C^{\infty}_{X,\mathbb{R}}
   \ar[rr]^{} && \mathscr{K}^{1}_{X,\mathbb{R}} \ar[r] & 0. }
\end{equation}
From \eqref{s-e-ph}, we obtain a canonical connecting homomorphism
\begin{equation}\label{delta1}
\delta^{0}: \check{\mathrm{H}}^{0}(X, \mathscr{K}^{1}_{X,\mathbb{R}})\longrightarrow \check{\mathrm{H}}^{1}(X, \mathscr{PH}_{X,\mathbb{R}}).
\end{equation}
Let $\underline{\mathbb{R}}$ be the sheaf of locally constant functions on $X$ and $\underline{\mathbb{R}}\hookrightarrow\mathcal{O}_{X}$ the embedding via multiplication by $\sqrt{-1}$.
Then we get a short exact sequence
\begin{equation}\label{re-part}
\xymatrix@C=0.5cm{
  0 \ar[r] & \underline{\mathbb{R}} \ar[rr]^{\sqrt{-1}\cdot} && \mathcal{O}_{X}
   \ar[rr]^{\mathrm{Re}\quad} && \mathscr{PH}_{X,\mathbb{R}} \ar[r] & 0,}
\end{equation}
where $\mathrm{Re}$ is the real part map.
By \eqref{re-part}, we deduce another connecting homomorphism
\begin{equation}\label{delta2}
\delta^{1}: \check{\mathrm{H}}^{1}(X, \mathscr{PH}_{X,\mathbb{R}})\longrightarrow \check{\mathrm{H}}^{2}(X, \underline{\mathbb{R}}).
\end{equation}
Composing \eqref{delta1} with \eqref{delta2} gives rise to a canonical morphism:
\begin{equation}\label{c}
c_{1}: \check{\mathrm{H}}^{0}(X, \mathscr{K}^{1}_{X,\mathbb{R}})\longrightarrow \check{\mathrm{H}}^{2}(X, \underline{\mathbb{R}}).
\end{equation}
The cohomology class $c_{1}(\kappa)\in \check{\mathrm{H}}^{2}(X, \underline{\mathbb{R}})$ is called the \emph{K\"{a}hler class} of $\kappa$.

Similar to K\"{a}hler manifolds, a K\"{a}hler metric on a singular complex space yields a closed $\C^{\infty}$-K\"{a}hler differential 2-form called the K\"{a}hler form.
Consider the case where $X$ is an analytic set of a domain $D$ in $\mathbb{C}^{N}$.
By Example \ref{com-spa}, $X$ is a complex stratified pseudomanifold with respect to the canonical stratification by singularities.
Set $\C^{\infty}_{\mathbb{C}}(X)$ the algebra of $\mathbb{C}$-valued smooth functions and $\Omega^{k}_{X}(X;\mathbb{C})$ the associated $\C^{\infty}$-K\"{a}hler differentials $k$-forms.
Let $\underline{i}=(i,i_{\sharp}):(X,\mathcal{O}_{X})\hookrightarrow(D,\mathcal{O}_{D})$ be the holomorphic inclusion.
From definition, there is a canonical surjective morphism $\underline{i}^\ast:\Omega^{k}_{D}(D;\mathbb{C})\rightarrow\Omega^{k}_{X}(X;\mathbb{C})$.
Note that there exists a natural decomposition
$$
\Omega^{k}_{D}(D;\mathbb{C})=\bigoplus_{p+q=k}\Omega^{p,q}_{D}(D;\mathbb{C}).
$$
Set $\Omega^{k,l}_{X}(X;\mathbb{C})$ the image of $\Omega^{k,l}_{D}(D;\mathbb{C})$ under $\underline{i}^\ast$.
Then we have
$$
\Omega^{k}_{X}(X;\mathbb{C})=\bigoplus_{p+q=k}\Omega^{p,q}_{X}(X;\mathbb{C}).
$$
The usual differential
$\mathrm{d}=\partial+\bar{\partial}:
\Omega^{k}_{D}(D;\mathbb{C})\rightarrow\Omega^{k+1}_{D}(D;\mathbb{C})$
naturally induces the one acting on $\C^{\infty}$-K\"{a}hler differential forms
$\mathrm{d}=\partial+\bar{\partial}:
\Omega^{k}_{X}(X;\mathbb{C})\rightarrow\Omega^{k+1}_{X}(X;\mathbb{C})$
which satisfies the identities $\mathrm{d}^{2}=\partial^{2}=\bar{\partial}^{2}=\partial\bar{\partial}+\bar{\partial}\partial=0$.
In general case, let $\Omega^{p,q}_{X}$ be the sheaf of germs of $(p,q)$-type $\C^{\infty}$-K\"{a}hler differential forms on the complex space $X$.
Then we get the resulting $\bar{\partial}$- and $\partial$-complexes of sheaves, which are \emph{not} exact in general for the presence of singularities.

Let $\kappa=\{(U_{\alpha}, \varphi_{\alpha})\}_{\alpha\in\Lambda}$ be a K\"{a}hler metric on $X$.
From definition, for any $U_{\alpha}\cap U_{\beta}\neq\emptyset$ we have $\mathrm{d}\mathrm{d}^{c}\varphi_{\alpha}=\mathrm{d}\mathrm{d}^{c}\varphi_{\beta}$ on $U_{\alpha}\cap U_{\beta}$, where $\mathrm{d}^{c}=\sqrt{-1}(\bar{\partial}-\partial)$.
This implies that $\{(\mathrm{d}\mathrm{d}^{c}\varphi_{\alpha},U_{\alpha})\}_{\alpha\in\Lambda}$ defines a globally closed real $(1,1)$-type $\C^{\infty}$-K\"{a}hler differential form denoted by $\omega$ which is called the \emph{K\"{a}hler form} of the metric.
Moreover, $\omega$ represents a cohomology class in the $\mathcal{C}^{\infty}$-algebraic de Rham cohomology $\mathrm{H}^{2}_{\mathrm{DR}}(X)$.
Especially, if $X$ is smooth and $h$ is a K\"{a}hler metric with the K\"{a}hler form $\omega$.
The local K\"{a}hler potentials of $\omega$ yields a unique element $\kappa\in \check{\mathrm{H}}^{0}(X, \mathscr{K}^{1}_{X,\mathbb{R}})$.
In this case, we can identify the K\"{a}hler class $c_{1}(\kappa)$
with the de Rham cohomology class represented by $\omega$ via the canonical isomorphism $\check{\mathrm{H}}^{2}(X, \underline{\mathbb{R}})\cong \mathrm{H}^{2}_{\mathrm{DR}}(X)$.

\begin{thm}\label{com-spa-wk}
Let $(X,\CO_{X})$ be a complex space with a K\"{a}hler metric $\kappa=\{(U_{\alpha}, \varphi_{\alpha})\}_{\alpha\in\Lambda}$.
Then the K\"{a}hler form $\omega=\{(\mathrm{d}\mathrm{d}^{c}\varphi_{\alpha},U_{\alpha})\}_{\alpha\in\Lambda}$ makes $X$ into a K\"{a}hler stratified pseudomanifold with respect to the stratification by singularities.
\end{thm}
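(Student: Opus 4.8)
The plan is to check directly the three ingredients in Definition~\ref{k-s-p}: that $X$ is a complex stratified pseudomanifold, that the K\"ahler form $\omega$ of the metric is a weak symplectic form in the sense of Definition~\ref{sym-spmd}, and that $\underline{i}^{\ast}\omega$ is an ordinary K\"ahler form on every stratum. The first point is already supplied by Example~\ref{com-spa}: the stratification by singularities $\mathcal{S}^{\mathrm{sing}}_{X}$ has complex manifolds as strata, the local embeddings of $X$ into domains of $\mathbb{C}^{K}$ furnish a holomorphic structure, and the regular part $X_{\mathrm{reg}}$ (the union of the maximal strata) is open and dense. Moreover, by the discussion preceding the theorem, the data $\{(\mathrm{d}\mathrm{d}^{c}\phi_{i},U_{i})\}$ glue to a globally defined closed real $(1,1)$-type $\C^{\infty}$-K\"ahler differential $2$-form $\omega$; closedness is immediate since $\mathrm{d}\omega|_{U_{i}}=\mathrm{d}\mathrm{d}\mathrm{d}^{c}\phi_{i}=0$.

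First I would establish the stratum-wise statement. Fix a stratum $S$ with inclusion $\underline{i}=(i,i_{\sharp})\colon(S,\CO_{S})\hookrightarrow(X,\CO_{X})$. As in Proposition~\ref{X-S}, on the complex manifold $S$ the $\C^{\infty}$-K\"ahler differential forms and the operator $\mathrm{d}$ agree with the ordinary ones, so $\underline{i}^{\ast}\omega$ is an ordinary closed real $(1,1)$-form, equal to $\mathrm{d}\mathrm{d}^{c}(\phi_{i}|_{S\cap U_{i}})$ on $S\cap U_{i}$. The crux is to show that $\phi_{i}|_{S}$ is strictly plurisubharmonic on $S$, so that $\underline{i}^{\ast}\omega$ is positive and hence a K\"ahler form. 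To see this one works in a local embedding $X\hookrightarrow\mathbb{C}^{N}$ under which $S$ maps to a complex submanifold of an open set: given a smooth test function on $S$ and a relatively compact open subset, one extends the test function to a smooth function on a neighbourhood of its closure in $X$ (using the local extension through the embedding together with the fineness of $\C^{\infty}_{X}$), applies the perturbation definition of strict plurisubharmonicity of $\phi_{i}$ on $X$, and restricts the resulting plurisubharmonic perturbations to the complex submanifold, using that plurisubharmonicity is preserved under restriction to complex submanifolds. This yields strict plurisubharmonicity of $\phi_{i}|_{S}$; alternatively one may simply invoke \cite[Definition~55]{HS20} and the corresponding inheritance of strict plurisubharmonicity by strata. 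In particular $\underline{i}^{\ast}\omega$ is an ordinary symplectic form on each stratum.

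Next I would verify weak non-degeneracy. By Property~\ref{t-inj} it suffices to show that $\omega^{\flat}\colon\mathcal{T}_{X}\to\Omega^{1}_{X}$ is injective. Let $U\subset X$ be open and $v\in\mathcal{T}_{X}(U)$ with $\iota(v)(\omega|_{U})=0$. Restricting to $U_{\mathrm{reg}}=U\cap X_{\mathrm{reg}}$, where by the previous paragraph $\omega|_{U_{\mathrm{reg}}}$ is an ordinary symplectic form, the contraction $v\mapsto\iota(v)(\omega|_{U_{\mathrm{reg}}})$ is an isomorphism of $\C^{\infty}(U_{\mathrm{reg}})$-modules, hence $v|_{U_{\mathrm{reg}}}=0$. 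Since the tangent sheaf $\mathcal{T}_{X}$ is torsion-free in the sense of Definition~\ref{tor-ele} (established above), this forces $v=0$, so $\omega^{\flat}$ is injective and $\omega$ is weakly non-degenerate. Combining this with the stratum-wise statement, $\omega$ is a weak symplectic form on $\mathcal{X}$; and since $\underline{i}^{\ast}\omega$ is a K\"ahler form on every stratum, Definition~\ref{k-s-p} gives that $(X,\mathcal{S}^{\mathrm{sing}}_{X},\CO_{X})$ equipped with $\omega$ is a weak K\"ahler stratified pseudomanifold. The main obstacle is the stratum-wise K\"ahler claim, specifically the passage from the perturbation-style strict plurisubharmonicity of $\phi_{i}$ on the singular space $X$ to strict plurisubharmonicity of its restriction to a stratum; the delicate points there are the extension of smooth test functions from a stratum to the ambient singular space and the compatibility of the perturbation definition with restriction to complex submanifolds.
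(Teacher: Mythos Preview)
Your argument is correct, and the weak non-degeneracy step coincides with the paper's: both use the commutative square restricting to $U_{\mathrm{reg}}$, the fact that $\omega|_{U_{\mathrm{reg}}}$ is an ordinary symplectic form, and the torsion-freeness of $\mathcal{T}_{X}$ to conclude injectivity of $\omega^{\flat}$.

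Where you diverge is in the stratum-wise K\"ahler claim. You work intrinsically: extend a smooth test function from the stratum $S$ to $X$, apply the perturbation definition of strict plurisubharmonicity of $\phi_{i}$ on $X$, and then restrict back to $S$, invoking that plurisubharmonicity survives restriction to complex submanifolds. This is valid but, as you acknowledge, the extension of test functions from $S$ to $X$ and the interplay with the perturbation definition require some care. The paper avoids this entirely by extending in the \emph{other} direction: working in a local embedding $\underline{\psi}\colon X\hookrightarrow D\subset\mathbb{C}^{n}$, one uses that the smooth strictly plurisubharmonic potential $f=\phi_{i}$ on $X$ is (by the definition of plurisubharmonicity on complex spaces, after possibly shrinking $D$) the restriction of a smooth strictly plurisubharmonic function $F$ on the ambient domain $D$. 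Then $\Omega=\mathrm{d}\mathrm{d}^{c}F$ is an honest K\"ahler form on the manifold $D$; since $S$ sits in $D$ as a complex submanifold via $\underline{\jmath}=\underline{\psi}\circ\underline{\imath}$, the pullback $\underline{\jmath}^{\ast}\Omega$ is automatically a K\"ahler form on $S$, and the commutativity $\underline{\imath}^{\ast}\omega=\underline{\jmath}^{\ast}\Omega$ finishes the argument. The paper's route thus trades your test-function extension for a single potential extension to the ambient Euclidean domain, which sidesteps the delicate points you flagged.
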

\begin{proof}
Note that $(X,\mathcal{S}^{\mathrm{sing}}_{X},\CO_{X})$ is a complex stratified pseudomanifold.
Let $S$ be an arbitrary stratum of $X$.
We show that the pullback $\underline{\imath}^{\ast}\omega$ defines a usual K\"{a}hler structure on $S$,
where $\underline{\imath}=(\imath,\imath_{\sharp}):(S,\CO_{S})\hookrightarrow (X,\CO_{X})$ is the inclusion map.
Without loss of generality, we may assume that $X$ is an analytic subset of a domain $D$ of $\mathbb{C}^{n}$ and the K\"{a}hler metric is given by a strictly plurisubharmonic $\mathcal{C}^{\infty}$-function $f:X\rightarrow\mathbb{R}$.
Consequently, we get a commutative diagram of holomorphic maps:
\begin{equation}\label{SXD}
\vcenter{
\xymatrix@C=1.5cm{
  S \ar[d]_{\underline{\imath}} \ar[dr]^{\underline{\jmath}}        \\
  X \ar[r]_{\underline{\psi}}  & D              }
  }
\end{equation}
From definition, there exists a strictly plurisubharmonic $\mathcal{C}^{\infty}$-function $f:D\rightarrow\mathbb{R}$ such that $\varphi=\underline{\psi}^{\ast}f$.
Observe that $\Omega:=\mathrm{d}\mathrm{d}^{c}f$ defines a K\"{a}hler metric on $D$.
Since $S\subset D$ is a complex submanifold the pullback of the metric $\underline{\jmath}^{\ast}\Omega$ gives rise to a usual K\"{a}hler metric on $S$.
The commutativity of \eqref{SXD} implies $\underline{\imath}^{\ast}\omega=\underline{\imath}^{\ast}\circ\underline{\psi}^{\ast}(\Omega)
=\underline{\jmath}^{\ast}\Omega$
and therefore $\underline{\imath}^{\ast}\omega$ defines a usual K\"{a}hler structure on $S$.

To finish the proof, it remains to verify that $\omega$ is non-degenerate.
For all open subset $U$ of $X$, let $U_{\mathrm{reg}}$ be the intersection of $U$ with the regular stratum $X_{\mathrm{reg}}$ and $\omega_{\mathrm{reg}}$ the restriction of $\omega$ to $U_{\mathrm{reg}}$.
Then $\omega_{\mathrm{reg}}$ defines a usual K\"{a}hler metric on $U_{\mathrm{reg}}$.
Moreover, we get a commutative diagram
\begin{equation}\label{omg-reg-1}
\vcenter{
\xymatrix@C=1.3cm{
  \mathcal{T}_{X}(U) \ar[d]_{\rho_{\mathrm{reg}}}
  \ar[r]^{\omega^{\flat}_{U}} & \Omega^{1}_{X}(U) \ar[d]^{\rho^{1}_{\mathrm{reg}}} \\
  \mathcal{T}_{X}(U_{\mathrm{reg}}) \ar[r]^{\omega_{\mathrm{reg}}^{\flat}}_{\simeq} & \Omega^{1}_{X}(U_{\mathrm{reg}}).}
  }
\end{equation}
As $\rho_{\mathrm{reg}}$ is injective, so is the map $\omega^{\flat}_{U}$ in \eqref{omg-reg-1} by the commutativity.
This implies that the sheaf morphism $\omega^{\flat}:\mathcal{T}_{X}\rightarrow\Omega^{1}_{X}$ is injective, i.e., $\omega$ is non-degenerate.
\end{proof}

\begin{rem}
For a given complex orbifold $X$, there are two natural ways to extend the K\"{a}hler structure on it.
Akin to Hermitian manifold, one can endow a complex orbifold with an orbifold Hermitian metric.
The complex orbifold $X$ is said to be K\"{a}hler, if it admits an orbifold Hermitian metric such that the associated orbifold K\"{a}hler form is closed.
From complex analytic geometry point of view, the complex orbifold $X$ can be considered as a complex analytic space and thus we can define $X$ to be K\"{a}hler if it is a K\"{a}hler space in the sense of Definition \ref{kahler space}.
A $\C^{\infty}$-K\"{a}hler differential $p$-form on $X$ always gives rise to an orbifold differential $p$-form.
However, in general, an orbifold differential $p$-form on $X$ can not descend to a $\C^{\infty}$-K\"{a}hler differential $p$-form.
Particularly, the two extensions of K\"{a}hler structures on a compact complex orbifold are equivalent, see \cite{Wu23}.
\end{rem}
\subsection{K\"{a}hler quotients}
Consider a K\"{a}hler Hamiltonian $G$-manifold $(M,ds^{2},G,\mu)$.
Assume that the moment map $\mu$ is proper.
It has been shown that the K\"{a}hler quotient $M_{0}\cong M^{ss}/\!\!/G^{\mathbb{C}}$ is a compact reduced normal complex analytic space (cf. \cite{Sj95, HS20}).
Furthermore, Heinzner--Stratmann showed that the original K\"{a}hler metric on $M$ induces a natural K\"{a}hler metric with continuous local potentials on $M_{0}$.

To be more specific, by a holomorphic function on $Z$ we mean the restriction of a holomorphic function on $M$.
Let $\mathcal{O}_{Z}$ be the sheaf of holomorphic functions on $Z$ and $\mathcal{O}_{M_{0}}=(\pi_{\ast}\mathcal{O}_{Z})^{G}$.
On account of \cite[Theorem 2]{HS20}, for any $x$ in $Z=\mu^{-1}(0)$ there exist a $G$-invariant open neighborhood $U$ of $x$ together with a $G$-invariant smooth strictly p.s.h.  function $\varphi: U\rightarrow\mathbb{R}$ satisfying the following conditions:
\begin{itemize}
  \item [(i)] the analytic Hilbert quotient in the sense of Heinzner \cite{Hei91} $\pi_{\mathrm{an}}:U\rightarrow U/\!\!/G$ is well-defined and $(U/\!\!/G, \mathcal{O}_{U/\!\!/G})$ is a reduced normal complex space;
  \item [(ii)] the inclusion $\imath:Z_{U}:=Z\cap U\hookrightarrow U$ induces a biholomorphic map
      $$\chi:(Z_{U}/G, \mathcal{O}_{Z_{U}/G})\longrightarrow(U/\!\!/G, \mathcal{O}_{U/\!\!/G})$$
      such that the following diagram is commutative;
      \begin{equation}\label{zu-u}
      \vcenter{
      \xymatrix{
        Z_{U} \ar[d]_{\pi} \ar[r]^{\imath} & U \ar[d]^{\pi_{\mathrm{an}}} \\
        Z_{U}/G \ar[r]^{\chi} & U/\!\!/G   }
        }
      \end{equation}
  \item [(iii)] $\omega|_{U}=\mathrm{d}\mathrm{d}^{c}\varphi$ and
      $(\mu|_{U})^{v}=-\iota(\tilde{v})\mathrm{d}^{c}\varphi$ for any $v\in\mathfrak{g}$, where $\tilde{v}$ is the fundamental vector field on $U$ generated by $v$.
\end{itemize}
The pair $(U, \varphi)$ is called a local slice model around $x\in Z$.
As $\mu$ is proper, the level set $Z$ is compact and therefore we get a finite family of $G$-invariant open subsets $\mathscr{U}:=\{U_{1}, \cdots, U_{l}\}$ together with a collection of $G$-invariant smooth strictly p.s.h.  function $\{\varphi_{i}: U_{i}\rightarrow\mathbb{R}\}^{l}_{i=1}$ such that $Z\subset\bigcup^{l}_{i=1}U_{i}$.
Set $\mathscr{U}_{Z}:=\{Z\cap U_{1}, \cdots, Z\cap U_{l}\}$ and $\tilde{\mathscr{U}}:=\{\tilde{U}_{1}, \cdots, \tilde{U}_{l}\}$ where $\tilde{U}_{i}:=\pi(Z\cap U_{i})\cong U_{i}/\!\!/G$.
Then $\tilde{\mathscr{U}}$ forms an open covering of $M_{0}$ and every function $\varphi_{i}|_{Z\cap U_{i}}$ is $G$-invariant.
Furthermore, $\varphi_{i}|_{Z\cap U_{i}}$ descends to a \emph{continuous} strictly p.s.h. function $\tilde{\varphi}_{i}$ on the analytic Hilbert quotient $U_{i}/\!\!/G\cong\tilde{U}_{i}$ satisfying the condition
$$
(\tilde{\varphi}_{i}-\tilde{\varphi}_{j})|_{\tilde{U}_{i}\cap \tilde{U}_{j}}\in
\mathscr{PH}_{M_{0}, \mathbb{R}}(\tilde{U}_{i}\cap \tilde{U}_{j})
$$
on each $\tilde{U}_{i}\cap \tilde{U}_{j}\neq\emptyset$.
Put $\C^{0}_{M_{0}, \mathbb{R}}$ the sheaf of real-valued continuous functions and denote by $\mathscr{F}_{M_{0}, \mathbb{R}}$ the quotient sheaf $\C^{0}_{M_{0}, \mathbb{R}}/\mathscr{PH}_{M_{0}, \mathbb{R}}$.
It follows that the collection $\kappa=\{(\tilde{U}_{i}, \tilde{\varphi}_{i})\}^{l}_{i=1}$ gives rise to an element in $\check{\mathrm{H}}^{0}(M_{0}, \mathscr{F}_{M_{0}, \mathbb{R}})$.

Observe that $\varphi_{i}|_{Z\cap U_{i}}$ also can be considered as a function on the quotient $(Z\cap U_{i})/G$ denoted by $\hat{\varphi}_{i}$.
By the definition of smooth structure on the symplectic quotient (see Example \ref{sym-quot}), we get $\hat{\varphi}_{i}\in\mathcal{C}^{\infty}((Z\cap U_{i})/G)$.
Although the quotient space $(Z\cap U_{i})/G$ can be identified with the analytic Hilbert quotient $U_{i}/\!\!/G\cong\tilde{U}_{i}$, because of the singularities of $Z\cap U_{i}$ and the quotient map $\pi_{\mathrm{an}}$, the function $\hat{\varphi}_{i}$ does not coincide with $\tilde{\varphi}_{i}$ even if $U_{i}/\!\!/G\cong\tilde{U}_{i}$ is a smooth complex manifold, see the example in \cite[Section 6]{HS20}.
This fact was overlooked in the previous version.

Since $\tilde{\varphi}_{i}$ is only a continuous function, the action of the operator $\mathrm{d}\mathrm{d}^{c}$ on $\tilde{\varphi}_{i}$ does not make sense and hence $\kappa$ can not produces a $\mathcal{C}^{\infty}$-K\"{a}hler differential 2-form.
Consequently, there is no direct definition of symplectic form on the singular symplectic quotient in the sense of Definition \ref{sym-spmd}.
However, akin to the construction of  K\"{a}hler classes in Subsection \ref{k-c-c}, we can still define the K\"{a}hler class of $\kappa$ valued in the \v{C}ech cohomology $\check{\mathrm{H}}^{2}(M_{0}, \underline{\mathbb{R}})$.
On the one hand, observe that there is a short exact sequence of sheaves on $M_{0}$:
\begin{equation}\label{s-e-c0}
\xymatrix@C=0.5cm{
  0 \ar[r] & \mathscr{PH}_{M_{0},\mathbb{R}} \ar[rr]^{} && \C^{0}_{M_{0},\mathbb{R}}
   \ar[rr]^{} && \mathscr{F}_{M_{0},\mathbb{R}} \ar[r] & 0. }
\end{equation}
From \eqref{s-e-c0}, we get a canonical morphism
\begin{equation}\label{del-c3}
\delta^{0}: \check{\mathrm{H}}^{0}(M_{0}, \mathscr{F}_{M_{0},\mathbb{R}})
\longrightarrow \check{\mathrm{H}}^{1}(M_{0}, \mathscr{PH}_{M_{0},\mathbb{R}} ).
\end{equation}
On the other hand, by the short exact sequence
\begin{equation*}
\xymatrix@C=0.5cm{
  0 \ar[r] & \underline{\mathbb{R}} \ar[rr]^{\sqrt{-1}\cdot} && \mathcal{O}_{M_{0}}
   \ar[rr]^{\mathrm{Re}\quad} && \mathscr{PH}_{M_{0},\mathbb{R}} \ar[r] & 0}
\end{equation*}
we get another canonical morphism
\begin{equation}\label{del-c4}
\delta^{1}: \check{\mathrm{H}}^{1}(M_{0}, \mathscr{PH}_{M_{0},\mathbb{R}})
\longrightarrow \check{\mathrm{H}}^{2}(M_{0}, \underline{\mathbb{R}} ).
\end{equation}
Combining \eqref{del-c3} with \eqref{del-c4} comprises to a canonical morphism
\begin{equation*}
c_{1}: \check{\mathrm{H}}^{0}(M_{0}, \mathscr{F}_{M_{0},\mathbb{R}})\longrightarrow \check{\mathrm{H}}^{2}(M_{0}, \underline{\mathbb{R}}).
\end{equation*}
We call $[\omega_{0}]:=c_{1}(\kappa)\in \check{\mathrm{H}}^{2}(M_{0}, \underline{\mathbb{R}})$ the K\"{a}hler class corresponding to $\kappa$.

The following result is a direct consequence of \cite[Corollary 4]{HS20}.
\begin{prop}\label{ii}
Let  $(M,ds^{2},G,\mu)$ be a K\"{a}hler Hamiltonian $G$-manifold and $\omega=-\mathrm{Im}\,ds^{2}$ the K\"{a}hler form.
Then there exists a unique class $[\omega_{0}]$ in the \v{C}ech cohomology $\check{\mathrm{H}}^{2}(M_{0}, \underline{\mathbb{R}})$ such that
$\pi^{\ast}([\omega_{0}])=\imath^{\ast}([\omega])$, where $[\omega]$ is considered as a cohomology class in $\check{\mathrm{H}}^{2}(M, \underline{\mathbb{R}})$ via the canonical isomorphism $\check{\mathrm{H}}^{2}(M, \underline{\mathbb{R}})\cong \mathrm{H}^{2}_{\mathrm{DR}}(M)$.
\end{prop}
\begin{proof}
Let $N=\bigcup^{l}_{\alpha=1}U_{\alpha}$ be the union of local slice models, which is an open neighborhood of $Z$.
Then the collection $\kappa=\{(U_{\alpha}, \varphi_{\alpha})\}^{l}_{\alpha=1}$ determines a class in $\check{\mathrm{H}}^{0}(N, \mathscr{K}^{1}_{N, \mathbb{R}})$; moreover, we have the K\"{a}hler class $$
[\omega_{N}]:=c_{1}(\kappa)\in\check{\mathrm{H}}^{2}(N, \underline{\mathbb{R}}).
$$
Actually, we can identify $[\omega_{N}]$ with the de Rham cohomology class represented by $\imath^{\ast}_{N}(\omega)$ via the canonical isomorphism $\check{\mathrm{H}}^{2}(N, \underline{\mathbb{R}})\cong \mathrm{H}^{2}_{\mathrm{DR}}(N)$, where $\imath_{N}$ the inclusion of $N$ into $M$.
Owing to \cite[Corollary 4]{HS20}, the K\"{a}hler metric $\kappa$ determines a unique K\"{a}hler metric with continuous local potentials $\tilde{\kappa}:=\{(\tilde{U}_{\alpha}, \tilde{\varphi}_{\alpha})\}^{l}_{\alpha=1}$ on $M_{0}$ satisfying $\imath^{\ast}(\varphi_{\alpha})=\pi^{\ast}(\tilde{\varphi}_{\alpha})$.
Let $[\omega_{0}]=c_{1}(\tilde{\kappa})\in\check{\mathrm{H}}^{2}(M_{0}, \underline{\mathbb{R}})$
be the K\"{a}hler class of $\tilde{\kappa}$.
To end the proof, it is sufficient to show
$\pi^{\ast}([\omega_{0}])=\imath^{\ast}([\omega_{N}])$.
On the one hand, without loss of generality, we can assume
$$
\tilde{\varphi}_{\alpha\beta}:=
(\tilde{\varphi}_{\beta}-\tilde{\varphi}_{\beta})|_{\tilde{U}_{\alpha\beta}}
=\tilde{f}_{\alpha\beta}+\bar{\tilde{f}}_{\alpha\beta}
$$
for some homomorphic function $\tilde{f}_{\alpha\beta}$ on $\tilde{U}_{\alpha\beta}$.
By definition, we get a \v{C}ech 1-cochain
$$
\tilde{f}=\{(\tilde{U}_{\alpha\beta}, \tilde{f}_{\alpha\beta})\}
\in\check{C}^{1}(\tilde{\mathscr{U}}, \mathcal{O}_{M_{0}}).
$$
Let $\delta$ be the \v{C}ech differential.
Then $\delta\tilde{f}$ is a \v{C}ech 2-cocycle with coefficients in $\underline{\mathbb{R}}$ which represents the K\"{a}hler class $[\omega_{0}]$.
On the other hand, let $f_{\alpha\beta}=\pi_{\mathrm{an}}^{\ast}(\tilde{f}_{\alpha\beta})$
then the family of holomorphic functions $\{f_{\alpha\beta}\}$ gives rise to a \v{C}ech 1-cochain $f\in\check{C}^{1}(\mathscr{U},\mathcal{O}_{N})$ such that $\delta f\in\check{C}^{2}(\mathscr{U},\underline{\mathbb{R}})$ is a representative of the K\"{a}hler class $[\omega_{N}]$ (cf. \cite[\S\,4.2]{Va89}).
Since $\pi^{\ast}(\tilde{f}_{\alpha\beta})=\imath^{\ast}(f_{\alpha\beta})$, we are led to the conclusion $\pi^{\ast}([\omega_{0}])=\imath^{\ast}([\omega_{N}])$ and this completes the proof.
\end{proof}

\begin{rem}
Due to the Richberg regularisation theorem on complex space (cf. \cite[Theorem 1]{Va89}),
for the collection $\tilde{\kappa}=\{(\tilde{U}_{\alpha}, \tilde{\varphi}_{\alpha})\}^{l}_{\alpha=1}$ on the singular K\"{a}hler quotient $M_{0}$, there exists a family of smooth strictly p.s.h. functions $\{\phi_{\alpha}:\tilde{U}_{\alpha}\rightarrow\mathbb{R}\}^{l}_{\alpha=1}$ such that $\phi_{\alpha}-\phi_{\beta}=\tilde{\varphi}_{\alpha}-\tilde{\varphi}_{\beta}$ on $\tilde{U}_{\alpha}\cap\tilde{U}_{\beta}\neq\emptyset$.
\end{rem}

\section{Symplectic structures on quotient spaces}\label{sym-quo}

Group actions on stratified pseudomanifolds are more complicated than actions on smooth manifolds. The reasons lie in the facts that a stratified pseudomanifold itself is singular and the group action also produces new singularities of the quotient spaces.
Assume that $X$ is a Hausdorff topological space and $G$ a topological group.
Recall that a $G$-action on $X$ is defined to be a continuous map
$\Theta:G\times X\rightarrow X$
such that
\begin{itemize}
  \item [(i)]$\Theta(g,\Theta(h,x))=\Theta(gh,x)$
             for all $g,h\in G$ and $x\in X$;
  \item [(ii)]$\Theta(e,x)=x$ for all $x\in X$,
             where $e\in G$ is the identity.
\end{itemize}
\begin{defn}\label{G-str}
Let $G$ be a compact connected Lie group.
A smooth $G$-action on a smooth stratified pseudomanifold $\mathcal{X}=(X,\mathcal{S}_{X},\mathcal{C}^{\infty}_{X})$ is defined to be a $G$-action on $X$ such that the following conditions hold:
\begin{itemize}
  \item [(i)] the map $\Theta:G\times X\rightarrow X$ is smooth in the category of  local $\mathcal{C}^{\infty}$-ringed spaces;
  \item [(ii)] the $G$-action preserves the stratification structure, i.e., every stratum of $X$ is a smooth $G$-manifold.
\end{itemize}
A smooth stratified pseudomanifold $\mathcal{X}$ together with a given smooth $G$-action is called a smooth \emph{$G$-stratified pseudomanifold}.
\end{defn}

Inspired by the definition of symplectic orbifolds, we introduce an indirect definition of a symplectic structure on the quotient of a smooth $G$-stratified pseudomanifold.
\begin{defn}\label{ind-ss}
Let $\mathcal{X}=(X,\mathcal{S}_{X},\mathcal{C}^{\infty}_{X})$ be a smooth $G$-stratified pseudomanifold.
A $G$-invariant pre-symplectic form $\omega$ on $X$ is said to define a \emph{symplectic structure} on the quotient space $X/G$, if for every stratum $S\in\mathcal{S}_{X}$ the pullback $\underline{i}^\ast\omega\in\Omega^{2}(S)$ is a pre-symplectic form on $S$ which degenerates along the $G$-orbits, where $\underline{i}=(i, i_{\sharp}):(S,\C^{\infty}_{S})\hookrightarrow (X,\C^{\infty}_{X})$ is the inclusion map.
\end{defn}

We are now in a position to prove Theorem \ref{thm1}.
\begin{proof}[Proof of Theorem \ref{thm1}]
Given a symplectic Hamiltonian $G$-manifold $(M,\omega,G,\mu)$ with dimension $2m$,
we consider $M$ as a smooth stratified pseudomanifold with the orbit type stratification.
First, we show that the level set $Z=\mu^{-1}(0)$ admits a natural structure of a presymplectic stratified pseudomanifold which induces a symplectic structure on the quotient space $M_{0}=Z/G$ in the sense of Definition \ref{ind-ss}.
Secondly, we show that the symplectic form $\omega$ determines a unique \v{C}ech cohomology class in $\check{\mathrm{H}}^{2}(M_{0}, \underline{\mathbb{R}})$ which makes $M_{0}$ into a cohomologically symplectic stratified pseudomanifold.
Finally, we show that Arms--Cushman--Gotay's Poisson bracket $\{-,-\}_{M_{0}}$ determines a natural injective morphism from the torsion-free cotangent sheaf $\Omega^{1}_{M_{0},\mathrm{tf}}$ to the tangent sheaf $\mathcal{T}_{M_{0}}$.

\textbf{Step 1.}
According to Example \ref{G-mfd}, the smooth $G$-manifold is a smooth stratified pseudomanifold with respect to the Whitney stratification $\mathcal{S}_{\mathrm{orb}}$ by orbit types:
\begin{equation*}
M=\coprod_{H<G}M_{(H)},
\end{equation*}
where $H$ ranges over all closed subgroups of $G$.
Let $M_{\mathrm{prin}}$ be the \emph{principal stratum} in $\mathcal{S}_{\mathrm{orb}}$, which is an open dense subset in $M$.
The level set $Z$ can be decomposed as
\begin{equation}\label{orbit-type-2}
Z=\coprod_{H<G}Z_{(H)},
\end{equation}
where $Z_{(H)}=Z\cap M_{(H)}$ is a $G$-invariant submanifold of $M$.
The decomposition \eqref{orbit-type-2} gives rise to a Whitney stratification $\MC_{Z}$ on $Z$; moreover, the corresponding principal stratum $Z_{\mathrm{prin}}=Z\cap M_{\mathrm{prin}}$ is an open and dense subset in $Z$.
Assume that
$$
\mathfrak{A}=
\{(U_{\lambda},\phi_{\lambda})\,|\,\lambda\in\Lambda\}
$$
is the $\C^{\infty}$-atlas of $M$.
A straightforward checking shows that
$$
\mathfrak{A}_{Z}:=
\{\phi_{\lambda}|_{Z\cap U_{\lambda}}:Z\cap U_{\lambda}\rightarrow\mathbb{R}^{2m}\,|\,\lambda\in\Lambda\}
$$
yields a $\C^{\infty}$-singular atlas of the stratified space $(Z,\mathcal{S}_{Z})$.
As a result, $(Z,\mathcal{S}_{Z},\C^{\infty}_{Z})$ becomes a smooth stratified pseudomanifold.
In particular, the inclusion mapping defines a morphism of local ringed spaces
$$
\underline{\imath}=(\imath,\imath_{\sharp}):(Z,\C^{\infty}_{Z})\longrightarrow(M,\C^{\infty}_{M}).
$$
Put $\sigma=\underline{\imath}^{\ast}\omega$.
On the one hand, it is clear that $\sigma$ is a closed $\C^{\infty}$-K\"{a}hler differential 2-form on $Z$.
On the other hand, in the setting of local $\C^{\infty}$-ringed spaces, for every stratum $Z_{(H)}$ we have the following commutative diagram of smooth maps:
\begin{equation*}
\vcenter{
\xymatrix@C=1.5cm{
  Z_{(H)} \ar[d]_{\underline{\imath}_{H}} \ar[dr]^{\underline{\jmath}_{H}}        \\
  Z \ar[r]_{\underline{\imath}\quad}  & M. }
  }
\end{equation*}
The commutativity of the diagram above means
$$
\underline{\imath}^{\ast}_{H}(\sigma)=
\underline{\imath}^{\ast}_{H}(\underline{\imath}^{\ast}\omega)=
\underline{\jmath}^{\ast}_{H}(\omega),
$$
which is a usual presymplectic form on the stratum $Z_{(H)}$.
This implies that $\sigma$ is a stratified presymplectic form on $Z$.

Consider the geometric structure of the symplecytic quotient $M_{0}=Z/G$.
Due to \cite[Theorem 2.1]{SL91}, the orbit space $M_{0}$ is a stratified space with the canonical stratification $\MC_{M_{0}}$:
\begin{equation*}
M_{0}=\coprod_{H<G}(M_{0})_{(H)},
\end{equation*}
where
$(M_{0})_{(H)}=Z_{(H)}/G$.
The orbit map $\pi:Z\rightarrow M_{0}$ is a continuous map under the quotient topology of $M_{0}$, and the principal stratum
$(M_{0})_{\mathrm{prin}}=Z_{\mathrm{prin}}/G$ is open and dense in $M_{0}$.
We now endow a smooth structure with $M_{0}$.
The structure sheaf $\C^{\infty}_{M_{0}}=(\pi_{\ast}\C^{\infty}_{Z})^{G}$ is defined by setting
\begin{equation*}
\C^{\infty}_{M_{0}}(U)=\C^{\infty}_{Z}(\pi^{-1}(U))^{G}
\end{equation*}
for any open subset $U$ in $M_{0}$.
In fact, the structure sheaf $\C^{\infty}_{M_{0}}$ comes from a canonical $\C^{\infty}$-singular atlas of $M_{0}$.
The proof is a combination of the local structure theorem of the reduced space \cite[Theorem 5.1]{SL91} and the  proper embedding theorem of orbit spaces by Schwarz \cite{Sc75}, see \cite[Section 6]{Pf01b} for details.
So that $\mathcal{M}_{0}=(M_{0},\mathcal{S}_{M_{0}},\C^{\infty}_{M_{0}})$ is a smooth stratified pseudomanifold, and the orbit map
$
\underline{\pi}=(\pi,\pi_{\sharp}):(Z,\C^{\infty}_{Z})\longrightarrow (M_{0},\C^{\infty}_{M_{0}})
$
becomes a morphism of local $\C^{\infty}$-ringed spaces.
Consequently, we obtain an \emph{inclusion-quotient diagram} in the category of smooth stratified pseudomanifolds:

\begin{equation}\label{str-in-qu}
\vcenter{
\xymatrix@=1cm{
  (Z,\mathcal{S}_{Z},\C^{\infty}_{Z}) \ar[d]_{\underline{\pi}}\, \ar[r]^{\underline{\imath}} &   (M,\mathcal{S}_{\mathrm{orb}}, \C^{\infty}_{M})     \\
  (M_{0},\mathcal{S}_{M_{0}},\C^{\infty}_{M_{0}}).}
  }
\end{equation}
Particularly, for every orbit type $(H)$, there exists a unique symplectic form $(\omega_{0})_{(H)}$ on $(M_{0})_{(H)}$ such that $\underline{\pi}^{\ast}_{H}(\omega_{0})_{(H)}
=\underline{\jmath}^{\ast}_{H}(\omega)=\underline{\imath}^{\ast}_{H}(\sigma)$,
where
$\underline{\pi}_{H}:Z_{(H)}\rightarrow(M_{0})_{(H)}$ is the quotient map (cf. \cite[Theorem 2.1]{SL91}).
It follows that the pre-symplectic form $\underline{\imath}^{\ast}_{H}(\sigma)$ degenerates along the $G$-orbits.
By definition, the $\mathcal{C}^{\infty}$-K\"{a}hler differential 2-form $\sigma$ on $Z$ defines a symplectic structure on $M_{0}$ in the sense of Definition \ref{ind-ss}.

\textbf{Step 2.}
If $0$ is a regular value of $\mu$,  then the level set $\mu^{-1}(0)$ is a smooth submanifold of $M$ and the reduced space $M_{0}$ is provided with a unique orbifold symplectic form $\omega_{0}$.
Because the de Rham cohomology of orbifold differential forms on $M_{0}$ is canonically isomorphic to the \v{C}ech cohomology $\check{\mathrm{H}}^{\ast}(M_{0}, \mathbb{R})$, the cohomology class of the orbifold symplectic form $\omega_{0}$ corresponds to a unique \v{C}ech cohomology class, denoted by $[\omega_{0}]\in\check{\mathrm{H}}^{2}(M_{0}, \underline{\mathbb{R}})$, such that $\underline{\pi}^{\ast}([\omega_{0}])=\underline{\imath}^{\ast}([\omega])$ in $\check{\mathrm{H}}^{2}(Z, \underline{\mathbb{R}})$.

Suppose $0$ is a critical value of $\mu$, we have the following indirect approach to construct the \v{C}ech cohomology class $[\omega_{0}]\in\check{\mathrm{H}}^{2}(M_{0}, \underline{\mathbb{R}})$.
On account of a result by Heinzner--Huckleberry--Loose \cite{HHL94},  the symplectic Hamiltonian $G$-manifold $(M,\omega,G,\mu)$ admits a canonical complex \lq\lq thickening"  $(X,\tau,G,\Phi)$,
called the Hamiltonian K\"{a}hlerian extension:
\begin{itemize}
  \item [(1)] $(X,\tau)$ is a K\"{a}hlerian Stein manifold on which $G$ acts by holomorphic isometries;
  \item [(2)] there exists a closed $G$-equivariant embedding $i:M\rightarrow X$ such that $i^{\ast}\tau=\omega$ and the image $i(M)$ is the fixed point set of an anti-holomorphic equivariant involution;
  \item [(3)] there is a unique moment map $\Phi:X\rightarrow\mathfrak{g}^{\ast}$ such that $\mu=i^{\ast}\Phi$.
\end{itemize}
Here the complex dimension of $X$ is equal to $\mathrm{dim}_{\mathbb{R}}\,M$. In addition, the K\"{a}hler form $\tau$ and the moment map $\Phi$ are unique up to
$G$-equivariant diffeomorphism around $M$ (cf. \cite[Theorem 5.1]{St02}).
Consider the reductions of $(M,\omega)$ and $(X,\tau)$ and then
we have the following commutative \emph{extension-inclusion-quotient} cube in the category of  local $\C^{\infty}$-ringed spaces.
\begin{equation}\label{eiq-cube}
\vcenter{
\xymatrix@=0.5cm{
  & \Phi^{-1}(0) \ar[rr]^{\underline{\jmath}} \ar'[d][dd]^{\underline{\varpi}}
      &  & X \ar[dd]^{}       \\
  \mu^{-1}(0) \ar[ur]^{\underline{i}}\ar[rr]^{\quad\quad\quad\underline{\imath}}\ar[dd]_{\underline{\pi}}
      &  & M \ar[ur]^{\underline{i}}\ar[dd]^{} \\
  & X_{0} \ar'[r][rr]^{}
      &  & X/G                \\
  M_{0} \ar[rr]^{}\ar[ur]^{\underline{k}}
      &  & M/G \ar[ur]        }
      }
\end{equation}
Let $[\tau]$ and $[\omega]$  be the \v{C}ech cohomology classes in $\check{\mathrm{H}}^{2}(X, \underline{\mathbb{R}})$ and $\check{\mathrm{H}}^{2}(M, \underline{\mathbb{R}})$ which correspond to the de Rham cohomology classes represented by $\tau$ and $\omega$ respectively.
Since $i^{\ast}\tau=\omega$, we get $i^{\ast}([\tau])=[\omega]$ in $\check{\mathrm{H}}^{2}(M, \underline{\mathbb{R}})$.
It follows from Proposition \ref{ii} that there exists a unique \v{C}ech cohomology class $[\tau_{0}]\in\check{\mathrm{H}}^{2}(X_{0}, \underline{\mathbb{R}})$ such that
$\varpi^{\ast}([\tau_{0}])=\jmath^{\ast}([\tau])$ in $\check{\mathrm{H}}^{2}(\Phi^{-1}(0), \underline{\mathbb{R}})$.
Set $[\omega_{0}]=k^{\ast}([\tau_{0}])$, which is a \v{C}ech cohomology class in  $\check{\mathrm{H}}^{2}(M_{0}, \underline{\mathbb{R}})$.
The commutative diagram \eqref{eiq-cube} induces a commutative diagram of \v{C}ech cohomology groups:
\begin{equation}\label{ccech}
\vcenter{
\xymatrix{
  \check{\mathrm{H}}^{2}(X_{0}, \underline{\mathbb{R}}) \ar[d]_{k^{\ast}} \ar[r]^{\varpi^{\ast}} & \check{\mathrm{H}}^{2}(\Phi^{-1}(0), \underline{\mathbb{R}}) \ar[d]_{i^{\ast}}  & \ar[l]_{\quad\quad\jmath^{\ast}} \check{\mathrm{H}}^{2}(X, \underline{\mathbb{R}}) \ar[d]^{i^{\ast}} \\
  \check{\mathrm{H}}^{2}(M_{0}, \underline{\mathbb{R}}) \ar[r]^{\pi^{\ast}} & \check{\mathrm{H}}^{2}(\mu^{-1}(0), \underline{\mathbb{R}})  & \ar[l]_{\quad\quad\imath^{\ast}}\check{\mathrm{H}}^{2}(M, \underline{\mathbb{R}})   }
  }
\end{equation}
From the commutativity of  \eqref{ccech}, we get
 $\pi^{\ast}([\omega_{0}])=\imath^{\ast}([\omega])$ in $\check{\mathrm{H}}^{2}(\mu^{-1}(0), \underline{\mathbb{R}})$.
It is worth noting that $0$ may be a critical value of $\Phi$ even if it is a regular value of $\mu$, and therefore the level set $\Phi^{-1}(0)$ may be a singular space.
However, when $0$ is a regular value of $\mu$ the \v{C}ech cohomology class determined by the orbifold symplectic form on $M_{0}$ coincides with the one constructed indirectly using the Hamiltonian K\"{a}hlerian extension.

Let $2n=\mathrm{dim}\,M_{0}$.
It remains to show that the \v{C}ech cohomology class $[\omega^{n}_{0}]\in\check{\mathrm{H}}^{2n}(M_{0}, \underline{\mathbb{R}})$ is nonzero.
In \cite{Sj05} Sjamaar introduced a de Rham model\footnote{
A differential form on $M_{0}$ in the sense of Sjamaar is defined to be a usual differential form $\alpha$ on the top stratum $(M_{0})_{\mathrm{prin}}$ such that $\pi^{\ast}_{\mathrm{prin}}\alpha=\imath^{\ast}_{\mathrm{prin}}\tilde{\alpha}$ for some usual differential form $\tilde{\alpha}$ on $M$, where $\pi_{\mathrm{prin}}:Z_{\mathrm{prin}}\rightarrow(M_{0})_{\mathrm{prin}}$ is the quotient map and $\imath_{\mathrm{prin}}:Z_{\mathrm{prin}}\rightarrow M$ is the inclusion map.}
for $M_{0}$ whose cohomology ring is isomorphic to the \v{C}ech cohomology ring $\check{\mathrm{H}}^{\ast}(M_{0},\underline{\mathbb{R}})$.
Owing to \cite[Theorem 5.5]{Sj05}, the \v{C}ech cohomology class $[\omega_{0}]\in\check{\mathrm{H}}^{2}(M_{0},\underline{\mathbb{R}})$ is identical to the class represented by the symplectic form $\omega_{\mathrm{prin}}$ on $M_{0}$ introduced in \cite[Section 3]{Sj05}.
As a direct consequence of \cite[Corollary 7.6.]{Sj05}, we are led to the conclusion that $[\omega_{0}]$ yields a cohomologically symplectic structure on $M_{0}$.

\textbf{Step 3.}
Recall the construction of universal reductions by Arms--Cushman--Gotay \cite[Section 2]{ACG89}.
Set $\Pi:M\twoheadrightarrow M/G$ the quotient map.
Observe that the symplectic form $\omega$ determines a regular Poisson bracket, denoted by $\{-,-\}_{\omega}$, on $\C^{\infty}(M)$.
This enables us to define a Poisson bracket $\{-,-\}_{M/G}$ on $\C^{\infty}(M/G)=\C^{\infty}(M)^{G}$ by setting
$$
\{h_{1},h_{2}\}_{M/G}(\Pi(x))=\{\Pi^{\ast}h_{1},\Pi^{\ast}h_{2}\}_{\omega}(x)
$$
for any $x$ in $M$.
Let $\mathcal{I}^{G}_{Z}$ be the ideal of $G$-invariant smooth functions vanishing on $Z$.
Then we have
$$
\C^{\infty}(M_{0})=\C^{\infty}(Z)^{G}\cong\C^{\infty}(M)^{G}/\mathcal{I}^{G}_{Z}.
$$
For each pair of smooth functions $f_{1},f_{2}\in\C^{\infty}(M_{0})$, there exist smooth extensions, denoted by $\tilde{f}_{1}$ and $\tilde{f}_{2}$, to the orbit space $M/G$.
Define the bracket
$$
\{f_{1},f_{2}\}_{M_{0}}:=\{\tilde{f}_{1},\tilde{f}_{2}\}_{M/G}\big|_{M_{0}}.
$$
Owing to \cite[Theorem 1]{ACG89}, we know that $\{-,-\}_{M_{0}}$ is a Poisson bracket.
In particular, the Poisson algebra $(\C^{\infty}(M_{0}),\{-,-\}_{M_{0}})$ is \emph{non-degenerate}, i.e., the Casimir elements in $\C^{\infty}(M_{0})$ are only locally constant functions, see \cite[Theorem 2]{ACG89} or \cite[Corollary 5.11]{SL91}.
Moreover, the bracket $\{-,-\}_{M_{0}}$ is compatible with the stratum-wise symplectic forms on $M_{0}$ (cf. \cite[Proposition 3.1]{SL91}).

We will prove the assertion (iii).
From now on, we assume that $U$ is an arbitrary open subset in $M_{0}$ and let $U_{\mathrm{prin}}=U\cap(M_{0})_{\mathrm{prin}}$ be the principle stratum of $U$ under the induced stratification structure from $M_{0}$.
We can find a $G$-invariant open neighborhood $\mathcal{W}$ of $\pi^{-1}(U)$ in $M$ such that $\pi^{-1}(U)=Z\cap\mathcal{W}$.
Set $\mathcal{W}_{\mathrm{prin}}=\mathcal{W}\cap M_{\mathrm{prin}}$, which is a $G$-invariant open subset in $M$ having the \lq\lq smallest" orbit type.
Let $i:\mathcal{W}_{\mathrm{prin}}\hookrightarrow M$ be the inclusion map.
Then we get a Hamiltonian $G$-open subset
$(\mathcal{W}_{\mathrm{prin}},\omega_{\mathrm{prin}},G,\mu_{\mathrm{prin}})$,
where $\omega_{\mathrm{prin}}=i^{\ast}\omega$ and $\mu_{\mathrm{prin}}$ represents the restriction of $\mu$ to $\mathcal{W}_{\mathrm{prin}}$.
On account of \cite[Theorem 2.1]{SL91}, the level set $\mu^{-1}_{\mathrm{prin}}(0)=Z\cap\mathcal{W}_{\mathrm{prin}}$ is a smooth manifold, and we have $\mu_{\mathrm{prin}}^{-1}(0)/G=U_{\mathrm{prin}}$.
In particular, the Marsden--Weinstein reduction is well-defined.
Denote by $\imath_{\mathrm{prin}}:\mu_{\mathrm{prin}}^{-1}(0)\hookrightarrow\mathcal{W}_{\mathrm{prin}}$ the inclusion map and $\pi_{\mathrm{prin}}:\mu_{\mathrm{prin}}^{-1}(0)\twoheadrightarrow U_{\mathrm{prin}}$ the orbit map.
Then there exists a natural symplectic form $(\omega_{0})_{\mathrm{prin}}$ on $U_{\mathrm{prin}}$ satisfying the condition
$$
\imath^{\ast}_{\mathrm{prin}}\omega_{\mathrm{prin}}
=\pi^{\ast}_{\mathrm{prin}}\bigl((\omega_{0})_{\mathrm{prin}}\bigr).
$$
Observe that $(U_{\mathrm{prin}},(\omega_{0})_{\mathrm{prin}})$ is a symplectic manifold.
The induced morphism of $\C^{\infty}(U_{\mathrm{prin}})$-modules
\begin{equation}\label{u-omg-prin}
  (\omega_{0})_{\mathrm{prin}}^{\flat}:\mathcal{T}_{M_{0}}(U_{\mathrm{prin}})
  \longrightarrow \Omega^{1}_{M_{0}}(U_{\mathrm{prin}})
\end{equation}
is an isomorphism.

Consider the universal reduction for $(\mathcal{W},\omega|_{\mathcal{W}},G,\mu|_{\mathcal{W}})$, and then we obtain a natural Poisson bracket $\{-,-\}_{U}$ on $\C^{\infty}_{M_{0}}(U)$.
Define the map
\begin{eqnarray*}
  \chi(U):\C^{\infty}_{M_{0}}(U) &\longrightarrow& \mathcal{T}_{M_{0}}(U) \\
  f&\longmapsto& X_{f}:=\{-,f\}_{U},
\end{eqnarray*}
which defines a sheaf morphism $\chi:\C^{\infty}_{M_{0}}\rightarrow\mathcal{T}_{M_{0}}$.
Since our smooth stratified pseudomanifolds are subcaresian spaces, it follows from \cite[Theorem 3.3]{CS20} that the derivation $X_{f}$ is a $\C^{\infty}$-derivation.
We claim that $\chi(U)$ is a $\C^{\infty}$-derivation of the $\C^{\infty}$-ring $\C^{\infty}_{M_{0}}(U)$ into the  $\C^{\infty}_{M_{0}}(U)$-module $\mathcal{T}_{M_{0}}(U)$ (see Definition \ref{c-inf-der}).
Let $h:\mathbb{R}^{n}\rightarrow\mathbb{R}$ be an arbitrary smooth function.
For any $(f_{1},\cdots,f_{n})\in(\C^{\infty}_{M_{0}}(U))^{n}$, define the function
$$(\Phi_{h}(f_{1},\cdots,f_{n}))(x)=h(f_{1}(x),\cdots,f_{n}(x))$$
where $x\in U$.
By definition, $\Phi_{h}(f_{1},\cdots,f_{n})$ is a smooth function on $U$.
For an arbitrary smooth function $g\in\C^{\infty}_{M_{0}}(U)$, since $X_{g}$ is a $\C^{\infty}$-derivation we have
\begin{eqnarray*}
  X_{\Phi_{h}(f_{1},\cdots,f_{n})}(g)
   &=& \{g,\Phi_{h}(f_{1},\cdots,f_{n})\}_{U} \\
   &=& -\{\Phi_{h}(f_{1},\cdots,f_{n}),g\}_{U} \\
   &=& -X_{g}(\Phi_{h}(f_{1},\cdots,f_{n})) \\
   &=& -\sum^{n}_{i=1}\Phi_{\frac{\partial h}{\partial x_{i}}}(f_{1},\cdots,f_{n}))\cdot X_{g}(f_{i}) \\
   &=& \sum^{n}_{i=1}\Phi_{\frac{\partial h}{\partial x_{i}}}(f_{1},\cdots,f_{n})\cdot \{g,f_{i}\}_{U} \\
   &=&  \sum^{n}_{i=1}\Phi_{\frac{\partial h}{\partial x_{i}}}(f_{1},\cdots,f_{n})\cdot X_{f_{i}}(g).
\end{eqnarray*}
This implies
$X_{\Phi_{h}(f_{1},\cdots,f_{n})}=\sum^{n}_{i=1}\Phi_{\frac{\partial h}{\partial x_{i}}}(f_{1},\cdots,f_{n})\cdot X_{f_{i}}$
and therefore the claim holds.
On account of the universal property of the cotangent module, there exists a unique $\C^{\infty}_{M_{0}}(U)$-linear map
$$
\chi^{\sharp}(U):\Omega^{1}_{M_{0}}(U)\longrightarrow\mathcal{T}_{M_{0}}(U)
$$
such that the following diagram is commutative:
\begin{equation*}
\xymatrix@=1cm{
  \C^{\infty}_{M_{0}}(U) \ar[d]_{\mathrm{d}} \ar[r]^{\chi(U)} &   \mathcal{T}_{M_{0}}(U)    \\
  \Omega^{1}_{M_{0}}(U) \ar[ur]_{\chi^{\sharp}(U)}.  }
\end{equation*}
So we get a sheaf morphism $\chi^{\sharp}:\Omega^{1}_{M_{0}}\rightarrow\mathcal{T}_{M_{0}}$ together with the commutative diagram
\begin{equation*}
\xymatrix@=1cm{
  \C^{\infty}_{M_{0}} \ar[d]_{\mathrm{d}} \ar[r]^{\chi} &   \mathcal{T}_{M_{0}}    \\
  \Omega^{1}_{M_{0}} \ar[ur]_{\chi^{\sharp}}.}
\end{equation*}
We will show that the kernel of $\chi^{\sharp}(U)$ is equal to the set of torsion elements.
As the Lie group $G$ is compact and $\mu_{\mathrm{prin}}^{-1}(0)$ is closed in $\mathcal{W}_{\mathrm{prin}}$,  by \cite[Theorem 3]{ACG89}, the Marsden--Weinstein reduction of $(\mathcal{W}_{\mathrm{prin}},\omega_{\mathrm{prin}},G,\mu_{\mathrm{prin}})$ coincides with its universal reduction.
It follows that the Poisson bracket $\{-,-\}_{U_{\mathrm{prin}}}$ on $\C^{\infty}_{M_{0}}(U_{\mathrm{prin}})$ is actually determined by the symplectic form $(\omega_{0})_{\mathrm{prin}}$.
As a result, the morphism
$$
\chi^{\sharp}(U_{\mathrm{prin}}):\Omega^{1}_{M_{0}}(U_{\mathrm{prin}})
\longrightarrow\mathcal{T}_{M_{0}}(U_{\mathrm{prin}})
$$
is the inverse of \eqref{u-omg-prin}.
Consider the commutative square
\begin{equation}\label{pi-prin}
\vcenter{
\xymatrix@C=1.5cm{
  \Omega^{1}_{M_{0}}(U) \ar[d]_{\rho^{1}_{\mathrm{prin}}}
  \ar[r]^{\chi^{\sharp}(U)} & \mathcal{T}_{M_{0}}(U) \ar[d]^{\rho_{\mathrm{prin}}} \\
   \Omega^{1}_{M_{0}}(U_{\mathrm{prin}})\ar[r]^{\chi^{\sharp}(U_{\mathrm{prin}})}_{\simeq} & \mathcal{T}_{M_{0}}(U_{\mathrm{prin}}).}
   }
\end{equation}
Note that $\rho_{\mathrm{prin}}$ in \eqref{pi-prin} is injective.
The commutativity of  \eqref{pi-prin} implies
\begin{equation*}\label{ker-pi}
\mathrm{Ker}\,(\chi^{\sharp}(U))=
\mathrm{Ker}\,(\rho^{1}_{\mathrm{prin}})=
\Omega^{1}_{M_{0},\mathrm{t}}(U).
\end{equation*}
Recall that
$$\Omega^{1}_{M_{0},\mathrm{tf}}(U)=\Omega^{1}_{M_{0}}(U)/\Omega^{1}_{M_{0},\mathrm{t}}(U).$$
So that $\chi^{\sharp}(U)$ determines an injective morphism, denoted by $\chi^{\ddag}(U)$, from $\Omega^{1}_{M_{0},\mathrm{tf}}(U)$ to $\mathcal{T}_{M_{0}}(U)$.
This implies an injective sheaf morphism
\begin{equation*}\label{inj-tf}
\chi^{\ddag}:\Omega^{1}_{M_{0},\mathrm{tf}}\longrightarrow\mathcal{T}_{M_{0}}
\end{equation*}
and the proof is complete.
\end{proof}

\begin{rem}
From Example \ref{G-mfd} and the inclusion-quotient diagram \eqref{str-in-qu}, we get a commutative square
\begin{equation*}
\vcenter{
\xymatrix@C=1.3cm{
  (Z,\mathcal{S}_{Z},\C^{\infty}_{Z}) \,\ar[d]_{\underline{\pi}} \ar[r]^{\underline{\imath}} & (M,\mathcal{S}_{\mathrm{orb}},\C^{\infty}_{M}) \ar[d]^{\underline{\Pi}} \\
  (M_{0},\mathcal{S}_{M_{0}},\C^{\infty}_{M_{0}})\, \ar[r]^{\underline{\jmath}} & (M/G,\mathcal{S}_{M/G}, \C^{\infty}_{M/G}).}
  }
\end{equation*}
Although, the symplectic form $\omega$ determines, and
conversely is determined by, the Poisson bracket $\{-,-\}_{\omega}$ on $M$, they have subtle different properties respectively.
The symplectic form $\omega$ can not descends to a $\C^{\infty}$-K\"{a}hler differential 2-form on $M/G$.
However, the pullback of $\omega$ to $Z$ gives rise to a presymplectic structure which produces
an indirect symplectic structure on $M_{0}$ in the sense of Definition \ref{ind-ss}.
Differing from $\omega$, the bracket $\{-,-\}_{\omega}$ induces a Poisson structure on the orbit space $M/G$ and then restricts to a non-degenerate Poisson bracket on $\mathcal{C}^{\infty}(M_{0})$.
\end{rem}

\appendix
\section{Review of $\mathcal{C}^{\infty}$-ringed spaces}\label{alg-pre}

The purpose of this appendix is to present a brief review of basic notions in the theory of $\mathcal{C}^{\infty}$-ringed spaces, and we will use \cite[Chapters 2--5]{Jo19} as our main reference.

Let $\mathfrak{C}$ be a set.
For any $n\geq0$ define
$\mathfrak{C}^{n}=\underbrace{\mathfrak{C}\times\cdots\times\mathfrak{C}}_{n-\mathrm{copies}}$
and $\mathfrak{C}^{0}=\{\emptyset\}$ by convention.

\begin{defn}
A $\mathcal{C}^{\infty}$-ring is defined to be a set $\mathfrak{C}$ together with $n$-fold operations
$$
\Phi_{f}:\mathfrak{C}^{n}\longrightarrow\mathfrak{C}
$$
for all $n\geq0$ and smooth function $f\in\mathcal{C}^{\infty}(\mathbb{R}^{n})$ such that
\begin{itemize}
  \item [(i)] For any smooth maps $F=(f_{1},\cdots,f_{m}):\mathbb{R}^{n}\rightarrow\mathbb{R}^{m}$ and $g:\mathbb{R}^{m}\rightarrow\mathbb{R}$, we have
      $$
      \Phi_{h}(c_{1},\cdots,c_{n})=
      \Phi_{g}(\Phi_{f_{1}}(c_{1},\cdots,c_{n}),\cdots,\Phi_{f_{m}}(c_{1},\cdots,c_{n}))
      $$
      where $h=g\circ F$ and $(c_{1},\cdots,c_{n})\in\mathfrak{C}^{n}$.
  \item [(ii)]For any $1\leq j\leq n$, let $\pi_{j}:\mathbb{R}^{n}\rightarrow\mathbb{R}$ be the projection on the $j$-th component, then we have
      $$\Phi_{\pi_{j}}(c_{1},\cdots,c_{n})=c_{j}.$$
\end{itemize}
\end{defn}

By a \emph{morphism} between two $\mathcal{C}^{\infty}$-rings $\bigl(\mathfrak{C},(\Phi_{f})_{f\in\mathcal{C}^{\infty}(\mathbb{R}^{n})}\bigr)$ and
$\bigl(\mathfrak{D},(\Psi_{f})_{f\in\mathcal{C}^{\infty}(\mathbb{R}^{n})}\bigr)$, we mean a map
$\phi:\mathfrak{C}\rightarrow\mathfrak{D}$ with
$$
\Psi_{f}(\phi(c_{1}),\cdots,\phi(c_{n}))=\phi\circ\Phi_{f}(\phi(c_{1}),\cdots,\phi(c_{n})).
$$
It is worthwhile to point out that every $\mathcal{C}^{\infty}$-ring $\mathfrak{C}$ has an underlying commutative $\mathbb{R}$-algebra:
\begin{itemize}
  \item The \emph{addition} is given by $c_{1}+c_{2}=\Phi_{f}(c_{1},c_{2})$, where  $f:\mathbb{R}^{2}\ni(x,y)\mapsto x+y\in\mathbb{R}$.
  \item The \emph{multiplication} is given by $c_{1}\cdot c_{2}=\Phi_{g}(c_{1},c_{2})$, where
      $g:\mathbb{R}^{2}\ni(x,y)\mapsto xy\in\mathbb{R}$.
  \item For any $\lambda\in\mathbb{R}$, the \emph{scalar multiplication} is given by $\lambda c=\Phi_{\bar{\lambda}}(c)$, where  $\bar{\lambda}:\mathbb{R}\ni x\mapsto \lambda x\in\mathbb{R}$.
  \item The elements $0$ and $1$ in $\mathfrak{C}$ are given by $0=\Phi_{\bar{0}}(\emptyset)$ and $1=\Phi_{\bar{1}}(\emptyset)$ respectively, where  $\bar{0}:\emptyset\mapsto 0$ and $\bar{1}:\emptyset\mapsto 1$.
\end{itemize}
A $\mathcal{C}^{\infty}$-ring $\mathfrak{C}$ is \emph{local} if the underlying commutative $\mathbb{R}$-algebra is local.
An \emph{ideal} in a $\mathcal{C}^{\infty}$-ring $\mathfrak{C}$ is defined to be an ideal $\mathcal{I}$ in the underlying commutative $\mathbb{R}$-algebra; in particular, the quotient $\mathfrak{C}/\mathcal{I}$ admits a natural structure of $\mathcal{C}^{\infty}$-ring (cf. \cite[Definition 2.7]{Jo19}).

\begin{defn}\label{f-g-c-r}
A $\mathcal{C}^{\infty}$-ring $\mathfrak{C}$ is \emph{finitely generated} if for any $c\in\mathfrak{C}$ there exists a smooth function $f\in\mathcal{C}^{\infty}(\mathbb{R}^{n})$ such that $c=\Phi_{f}(c_{1},\cdots,c_{n})$ for some $(c_{1},\cdots,c_{n})\in\mathfrak{C}^{n}$.
\end{defn}

The ring of smooth functions $\mathcal{C}^{\infty}(\mathbb{R}^{n})$ is the free $\mathcal{C}^{\infty}$-ring with $n$ generators, and a $\mathcal{C}^{\infty}$-ring $\mathfrak{C}$ is finitely generated if and only if $\mathfrak{C}$ is isomorphic to the $\mathcal{C}^{\infty}$-ring $\mathcal{C}^{\infty}(\mathbb{R})/\mathcal{I}$.
A striking difference between the conventional algebraic geometry and the $\mathcal{C}^{\infty}$-algebraic geometry lies in the fact that $\mathcal{C}^{\infty}(\mathbb{R}^{n})$ is not a noetherian ring.

\begin{defn}
Let $\mathfrak{C}$ be a $\mathcal{C}^{\infty}$-ring.
We call $M$ a \emph{$\mathfrak{C}$-module}, if it is a module over the underlying commutative $\mathbb{R}$-algebra of $\mathfrak{C}$.
\end{defn}
We say that a $\mathfrak{C}$-module $M$ is \emph{finitely generated} if there exists an exact sequence of $\mathfrak{C}$-modules
$$
\xymatrix@C=0.5cm{
    \mathfrak{C}\otimes\mathbb{R}^{n} \ar[r]^{} & M \ar[r] & 0. }
$$
\begin{defn}\label{c-inf-der}
Let $\mathfrak{C}$ be a $\mathcal{C}^{\infty}$-ring, $M$ a $\mathfrak{C}$-module.
A \emph{$\mathcal{C}^{\infty}$-derivation} of $\mathfrak{C}$ into $M$ is an $\mathbb{R}$-linear mapping $\mathrm{d}:\mathfrak{C}\rightarrow M$ satisfying
$$
\mathrm{d}\Phi_{f}(c_{1},\cdots,c_{n})=
\sum^{n}_{i=1}\Phi_{\frac{\partial f}{\partial x_{i}}}(c_{1},\cdots,c_{n})\cdot \mathrm{d}c_{i},
$$
for any smooth function $f\in\mathcal{C}^{\infty}(\mathbb{R}^{n})$ and $(c_{1},\cdots,c_{n})\in\mathfrak{C}^{n}$.
\end{defn}

We denote by  $\mathrm{Der}(\mathfrak{C},M)$
the space of $\mathcal{C}^{\infty}$-derivations on $\mathfrak{C}$ with values in $M$.
\begin{defn}\label{kah-diff}
Given a $\mathcal{C}^{\infty}$-ring $\mathfrak{C}$, let $\Omega_{\mathfrak{C}}$ be the quotient of the free $\mathfrak{C}$-module over the symbols $\mathrm{d}c$ for $c\in\mathfrak{C}$ by the $\mathfrak{C}$-submodule generated by all expressions of the forms
$$
\mathrm{d}\Phi_{f}(c_{1},\cdots,c_{n})-
\sum^{n}_{i=1}\Phi_{\frac{\partial f}{\partial x_{i}}}(c_{1},\cdots,c_{n})\cdot \mathrm{d}c_{i},
$$
for all $f\in\mathcal{C}^{\infty}(\mathbb{R}^{n})$ and $(c_{1},\cdots,c_{n})\in\mathfrak{C}^{n}$.
The pair $(\Omega_{\mathfrak{C}}, \mathrm{d})$ is called the \emph{cotangent module} of $\mathfrak{C}$.
\end{defn}
By definition, the cotangent module $(\Omega_{\mathfrak{C}}, \mathrm{d})$ satisfies the following universal property:
for any $\mathfrak{C}$-module $M$ and any $\mathcal{C}^{\infty}$-derivation $\delta\in\mathrm{Der}(\mathfrak{C},M)$
there exists a \emph{unique} morphism of $\mathfrak{C}$-modules
$\iota(\delta):\Omega_{\mathfrak{C}}\rightarrow M$ such that $\delta=\iota(\delta)\circ \mathrm{d}$, i.e., the diagram
\begin{equation*}
\xymatrix@=1.3cm{
  \mathfrak{C} \ar[d]_{\mathrm{d}} \ar[r]^{\delta} &   M     \\
  \Omega_{\mathfrak{C}} \ar[ur]_{\iota(\delta)}                     }
\end{equation*}
is commutative.
Moreover, there exists a canonical isomorphism
$$
\mathrm{Der}(\mathfrak{C},M)\cong
\Hom_{\mathfrak{C}}(\Omega_{\mathfrak{C}},M).
$$

Recall the definition of $\mathcal{C}^{\infty}$-ringed space.
\begin{defn}({\cite[Definition 4.8]{Jo19}})\label{inf-r-sps}
A \emph{$\mathcal{C}^{\infty}$-ringed space} is a topological space $X$ together with a sheaf $\mathcal{O}_{X}$ of $\mathcal{C}^{\infty}$-rings on $X$.
We call $(X,\mathcal{O}_{X})$ a \emph{local $\mathcal{C}^{\infty}$-ringed space},
if the stalk of the structure sheaf $\mathcal{O}_{X,x}$ is a local $\mathcal{C}^{\infty}$-ring for any $x\in X$.
\end{defn}

A special example of local $\mathcal{C}^{\infty}$-ringed space is the smooth subcartesian spaces, see Appendix \ref{smo-sub} below.
A \emph{morphism} of $\mathcal{C}^{\infty}$-ringed spaces $(X,\mathcal{O}_{X})$ and $(Y,\mathcal{O}_{Y})$ is a pair $\underline{f}=(f,f_{\sharp})$ consists of a continuous map of topological spaces $f:X\rightarrow Y$ together with a morphism of sheaves of $\mathcal{C}^{\infty}$-rings
$f_{\sharp}:\mathcal{O}_{Y}\rightarrow f_{\ast}\mathcal{O}_{X}$.
All $\mathcal{C}^{\infty}$-ringed spaces form a category which contains the category of smooth manifolds as a full subcategory.


For a smooth manifold,
the tangent bundle and differential forms play significant roles in the geometric study.
All these notions have their translations in the theory of $\mathcal{C}^{\infty}$-ringed spaces;
of course things are not well behaved for the appearance of possible singularities.
On the one hand, we use the language of sheaf to achieve such translations rather than vector bundles.
On the other hand, taking a dual point of view, we begin with the cotangent sheaf rather than the tangent sheaf.

Suppose $(X,\mathcal{O}_{X})$ is a $\mathcal{C}^{\infty}$-ringed space.
Let $U\subset X$ be an arbitrary open subset.
Then we have a $\mathcal{C}^{\infty}$-ring $\mathcal{O}_{X}(U)$ together with the cotangent module $(\Omega_{\mathcal{O}_{X}(U)}, \mathrm{d})$ in the sense of Definition \ref{kah-diff}.
So we can define a presheaf $\Omega^{1}_{X,\mathrm{pre}}$ of $\mathcal{O}_{X}$-modules by setting
$$
U\longmapsto\Omega_{\mathcal{O}_{X}(U)}
$$
for each open subset $U\subset X$.
\begin{defn}({\cite[Defition 5.29]{Jo19}})
The \emph{cotangent sheaf} $\Omega^{1}_{X}$ of a $\mathcal{C}^{\infty}$-ringed space $(X,\mathcal{O}_{X})$ is defined to be the sheafification of $\Omega^{1}_{X,\mathrm{pre}}$.
\end{defn}
Having shown the cotangent sheaf, we define the \emph{tangent sheaf} of $(X,\mathcal{O}_{X})$ to be the dual of the cotangent sheaf:
$$
\mathcal{T}_{X}=
    \Hom_{\mathcal{O}_{X}}(\Omega^{1}_{X},\mathcal{O}_{X})
$$
which is also a sheaf of $\mathcal{O}_{X}$-modules.

Let $(X,\mathcal{O}_{X})$ be a local $\mathcal{C}^{\infty}$-ringed space.
For each integer $p\geq2$ we can define a presheaf $\Omega^{p}_{X,\mathrm{pre}}$ on $X$ by assigning an $\mathcal{O}_{X}(V)$-module
$$
\Omega^{p}_{X,\mathrm{pre}}(V):=
\bigwedge^{p}\bigl(\Omega^{1}_{X}(V)\bigr).
$$
to an arbitrary open subset $V\subset X$.
The sheafification of the presheaf $\Omega^{p}_{X,\mathrm{pre}}$, denoted by $\Omega^{p}_{X}$, is called the sheaf  of $\mathcal{C}^{\infty}$-K\"{a}hler differential $p$-forms.
On account of \cite[\S\,8]{Ler22},
the $\mathcal{C}^{\infty}$-derivation $\mathrm{d}:\mathcal{O}_{X}(U)\rightarrow\Omega_{\mathcal{O}_{X}(U)}$ extends to a sheaf morphism
$\mathrm{d}:\Omega^{\ast}_{X}\rightarrow\Omega^{\ast+1}_{X}$ satisfying $\mathrm{d}\circ\mathrm{d}=0$, and this yields a sheaf complex
$(\Omega^{\bullet}_{X},\mathrm{d})$ called the \emph{$\mathcal{C}^{\infty}$-algebraic de Rham complex} of $(X,\mathcal{O}_{X})$:
\begin{equation}\label{de-com-X}
\xymatrix{
  0 \ar[r] & \mathcal{O}_{X} \ar[r]^{\mathrm{d}} & \Omega^{1}_{X} \ar[r]^{\mathrm{d}} & \Omega^{2}_{X} \ar[r]^{\mathrm{d}} & \cdots \ar[r]^{\mathrm{d}} & \Omega^{k}_{X} \ar[r] & \cdots. }
\end{equation}
The \emph{$\mathcal{C}^{\infty}$-algebraic de Rham cohomology} of $(X,\mathcal{O}_{X})$, denoted by $\mathrm{H}^{\ast}_{\mathrm{DR}}(X)$, is defined to be the hypercohomology of the sheaf complex \eqref{de-com-X}, i.e.,
$$
\mathrm{H}^{\ast}_{\DR}(X)=
\mathbb{H}^{\ast}(X,\Omega^{\bullet}_{X}).
$$
\begin{rem}
If $X$ is a usual smooth manifold and $\mathcal{O}_{X}$ equals the sheaf of smooth functions $\mathcal{C}^{\infty}_{X}$, then the cotangent sheaf $\Omega^{1}_{X}$ coincides with the sheaf of smooth sections of the cotangent bundle $T^{\ast}X$ (cf. \cite[Example 5.4]{Jo19}), and hence the tangent sheaf $\mathcal{T}_{X}$ is just the sheaf of smooth sections of the tangent bundle $TX$.
Particularly, the $\mathcal{C}^{\infty}$-algebraic de Rham cohomology of $(X,\mathcal{O}_{X})$ is equal to its usual de Rham cohomology (cf. \cite[Lemma 8.6]{Ler22}).
\end{rem}

\section{Smooth subcartesian spaces}\label{smo-sub}
In this appendix we review the definition of smooth subcartesian structure and the notion of vector pseudobundle.
Here we follow the terminologies in \cite{Ma75}.

\begin{defn}\label{ssp}
Let $S$ be a Hausdorff space.
A \emph{$\C^{\infty}$-atlas} on $S$ is a set of local homeomorphisms into the Euclidean spaces
$$
\mathfrak{A}=\{\phi_{\lambda}:U_{\lambda}\rightarrow\mathbb{R}^{n_{\lambda}}\,|\,\lambda\in\Lambda\}
$$
satisfying:
\begin{itemize}
  \item [(A1)] $\{U_{\lambda}\,|\,\lambda\in\Lambda\}$ forms an open covering of $S$;
  \item [(A2)] (Compatible condition) For any $\lambda,\lambda^{\prime}\in\Lambda$ and any $x\in U_{\lambda}\cap U_{\lambda^{\prime}}$, there exist $\C^{\infty}$-mappings $s$ extending $\phi_{\lambda^{\prime}}\circ\phi_{\lambda}^{-1}$ in an open neighborhood of $\phi_{\lambda}(x)$ in $\mathbb{R}^{\lambda}$ and $t$ extending $\phi_{\lambda}\circ\phi_{\lambda^{\prime}}^{-1}$ in an open neighborhood of $\phi_{\lambda^{\prime}}(x)$ in $\mathbb{R}^{\lambda^{\prime}}$.
\end{itemize}
A Hausdorff topological space with a $\C^{\infty}$-atlas is called a \emph{smooth subcartesian space}.
\end{defn}

Having introduced the smooth structure on $S$, one can define the smooth functions on $S$ and the smooth maps between two smooth subcartesian spaces in a natural way (cf. \cite[Section 1]{Ma75}).
In particular, all smooth subcartesian spaces form a category which contains the category of smooth manifolds as a full subcategory.

Let $(B,\mathfrak{A}_{B})$ be a smooth subcartesian spaces.
A \emph{$\C^{\infty}$-family} of $\mathbb{R}$-vector spaces is a smooth subcartesian spaces
$(E,\mathfrak{A}_{E})$
together with a smooth surjective map
$\pi:E\rightarrow B$
such that for every point $b$ in $B$ the fiber $\pi^{-1}(b)$ is a $\mathbb{R}$-vector space and the vector operators
$$
+:E\times_{B}E
\longrightarrow E\,\,\,\,
\mathrm{and}\,\,\,\,
\cdot:\mathbb{R}\times E\longrightarrow E
$$
are smooth maps.
Here $E\times_{B}E$ and
$\mathbb{R}\times E$ are considered as smooth subcartesian spaces with respect to the product topologies.
A morphism from $(E,\pi,B)$ to
$(E^{\prime},\pi^{\prime},B^{\prime})$
consists of a pair of smooth maps
$\tilde{f}:E\rightarrow E^{\prime}$
and
$f:B\rightarrow B^{\prime}$
such that $\tilde{f}$ is $\mathbb{R}$-linear along fibers and the commutativity of the following diagram holds
$$
\xymatrix@=1.3cm{
  E \ar[d]_{\pi} \ar[r]^{\tilde{f}} & E^{\prime} \ar[d]^{\pi^{\prime}} \\
  B \ar[r]^{f} & B^{\prime}.  }
$$

\begin{defn}\label{v-p-bundle}
A $\C^{\infty}$-family of $\mathbb{R}$-vector spaces $(E,\pi,B)$
is called a \emph{$\C^{\infty}$-vector pseudobundle} if $E$ has a $\C^{\infty}$-sub-atlas
$$
\mathfrak{A}_{E}=
\{\phi_{\lambda}:U_{\lambda}\rightarrow
\mathbb{R}^{n_{\lambda}}\}_{\lambda\in\Lambda}
$$
satisfying the conditions:
\begin{itemize}
  \item [(i)] $U_{\lambda}=\pi^{-1}(\pi(U_{\lambda}))$ holds for each $\lambda\in\Lambda$;
  \item [(ii)] for each chart
      $\phi:U\rightarrow
      \mathbb{R}^{n}$ of $\mathfrak{A}_{E}$ the map $\phi$ determines a smooth map
      $$
      \underline{\phi}:\pi(U)
      \longrightarrow\mathbb{R}^{m}\,\,\,
      (m\leq n)
      $$
      such that
      $(\phi,\underline{\phi}):(U,\pi,\pi(U))
      \longrightarrow
      (\mathbb{R}^{n},\pi_{n,m},\mathbb{R}^{m})$
      is a morphism of $\C^{\infty}$-family of $\mathbb{R}$-vector spaces, where $\pi_{n,m}$ is the canonical projection from $\mathbb{R}^{n}$ to $\mathbb{R}^{m}$ via the first $m$ coordinates.
\end{itemize}
\end{defn}

Similar to smooth vector bundles, we can define smooth sections of a $\C^{\infty}$-vector pseudobundle (or a $\C^{\infty}$-family of $\mathbb{R}$-vector spaces) $\xi=(E,\pi,B)$ to be smooth maps $s:B\rightarrow E$ with $\pi\circ s=\mathrm{id}_{B}$.

In a more general setting, we can define a $\C^{0}$-family of $\mathbb{R}$-vector spaces over a smooth subcartesian spaces $(B,\mathfrak{A}_{B})$ as follows.
\begin{defn}\label{c0-family}
A \emph{$\C^{0}$-family of $\mathbb{R}$-vector spaces} over $B$ is a topological space $E$ together with a stratified continuous map $\pi$ of $E$ onto $B$ satisfying:
\begin{itemize}
  \item [(i)] for any $b\in B$ the fiber $\pi^{-1}(b)$ is a $\mathbb{R}$-vector space;
  \item [(ii)] the vector operators
    $$
    +:E\times_{B}E
    \longrightarrow E\,\,\,\,
    \mathrm{and}\,\,\,\,
    \cdot:\mathbb{R}\times E\longrightarrow E
    $$
    are continuous.
\end{itemize}
\end{defn}


\end{document}